\newcommand{\clr}{rgb:black,1;blue,4;red,1}
\newcommand{\bdot}{ node[circle, draw, fill=\clr, thick, inner sep=0pt, minimum width=4pt]{}}
\newcommand{\ob}[1]{\mathsf{#1}}
\newcommand{\B}{\mathcal{B}}
\newcommand{\CB}{\mathcal{CB}}
\newcommand{\AB}{\mathcal{AB}}
\newcommand{\lcap}{
\begin{tikzpicture}[baseline = 3pt, scale=0.5, color=\clr]
        \draw[-,thick] (1,0) to[out=up, in=right] (0.53,0.5) to[out=left, in=right] (0.47,0.5);
        \draw[-,thick] (0.49,0.5) to[out=left,in=up] (0,0);
\end{tikzpicture}
}
\newcommand{\lcup}{
\begin{tikzpicture}[baseline = 6pt, scale=0.5, color=\clr]
        \draw[-,thick] (1,1) to[out=down, in=right] (0.53,0.5) to[out=left, in=right] (0.47,0.5);
        \draw[-,thick] (0.49,0.5) to[out=left,in=down] (0,1);
\end{tikzpicture}
}
\newcommand{\swap}{
\begin{tikzpicture}[baseline = 3pt, scale=0.5, color=\clr]
        \draw[-,thick] (0,0) to[out=up, in=down] (1,1);
        \draw[-,thick] (1,0) to[out=up, in=down] (0,1);
\end{tikzpicture}
}
\newcommand{\xdot}{
\begin{tikzpicture}[baseline = 3pt, scale=0.5, color=\clr]

\draw[-,thick] (0,0) to[out=up, in=down] (0,1);
\draw(0,0.5) \bdot;
\end{tikzpicture}
}
 \providecommand{\og}{``}
\providecommand{\fg}{''} \providecommand{\smfandname}{and}
\def\crulefill{\leavevmode\leaders\hrule height 1pt\hfill\kern 0pt}
\long\def\QUERY#1{%
\leavevmode\newline%
\noindent$\star\star\star$\thinspace\textsf{Comment/Query}\crulefill\newline%
   \space #1\newline\hbox to 120mm{\crulefill}$\star\star\star$\newline}
\newtheorem{Theorem}{Theorem}[section]
\newtheorem{Lemma}[Theorem]{Lemma}
\newtheorem{Cor}[Theorem]{Corollary}
\newtheorem{Prop}[Theorem]{Proposition}
\theoremstyle{definition}
\newtheorem{Defn}[Theorem]{Definition}
\newtheorem{rem}[Theorem]{Remark}
\newtheorem{Assumption}[Theorem]{Assumption}
\numberwithin{equation}{section}
\theoremstyle{definition}
\def\enumerate{\begingroup\ifnum\@enumdepth>3\@toodeep\else
      \advance\@enumdepth\@ne
      \edef\@enumctr{enum\romannumeral\the\@enumdepth}%
      \topsep\z@\parskip\z@
      \list{\csname label\@enumctr\endcsname}
        {\@nmbrlisttrue\let\@listctr\@enumctr
         \parsep\z@\itemsep\z@\topsep\z@
         \setcounter{\@enumctr}{0}
         \def\makelabel##1{\hss\llap{\rm ##1}}
       }\fi}
\let\bar=\overline
\let\epsilon=\varepsilon
\def\({\big(}
\def\){\big)}
\def\0{\underline{0}}
\DeclareMathOperator{\End}{End}
\def\hf{\frac{1}{2}}
\def\Hom{\text{Hom}}
  \gdef\set#1{\mathinner{\lbrace\,{\mathcode`\|"8000%
                                   \let|\midvert #1}\,\rbrace}}
  \gdef\seT#1{\mathinner{\Big\lbrace\,{\mathcode`\|"8000%
                                   \let|\midverT #1}\,\Big\rbrace}}
\def\midvert{\egroup\mid\bgroup}
\def\midverT{\egroup\,\Big|\,\bgroup}
\def\Set[#1]#2|#3|{\Big\{\ #2\ \Big| \
           \vcenter{\hsize #1mm\centering #3}\Big\}}
\def\wt{\widetilde}
\def\Hom{{\rm Hom}}
\def\mfg{{\mathfrak g}}
\def\Set{{\rm Set}}
\newcommand{\C}{\mathcal{C}}
\def\wt{{\text{\rm wt}}}
\def\Hom{\text{Hom}}%
\def\textsf#1{{\textit{#1}}}%
\begin{document}
\title{Representations of  Brauer category and  categorification }
\author{Hebing Rui, Linliang Song}
\address{H.R.  School of Mathematical Science, Tongji University,  Shanghai, 200092, China}\email{hbrui@tongji.edu.cn}
\address{L.S.  School of Mathematical Science, Tongji University,  Shanghai, 200092, China}\email{llsong@tongji.edu.cn}

\thanks{H. Rui is supported  partially by NSFC (grant No.  11571108).  L. Song is supported  partially by NSFC (grant No.  11501368). }
\sloppy \maketitle

\begin{abstract}We study  representations of the  locally unital and locally finite dimensional  algebra $B$ associated to the Brauer category $\B(\delta_0)$  with defining parameter $\delta_0$  over an   algebraically closed   field $K$ with characteristic $p\neq 2$. The Grothendieck group $K_0(B\text{-mod}^\Delta)$  will be used to  categorify  the integrable  highest weight $\mathfrak {sl}_{K}$-module $ V(\varpi_{\frac{\delta_0-1}{2}})$ with the fundamental weight $\varpi_{\frac{\delta_0-1}{2}}$ as its highest weight,  where  $B$-mod$^\Delta$  is a subcategory of $B$-lfdmod in which each object has a finite $\Delta$-flag, and $\mathfrak {sl}_{K}$ is either $\mathfrak{sl}_\infty$ or $\hat{\mathfrak{sl}}_p$
depending on whether $p=0$   or $2\nmid p$. As $\mfg$-modules, $\mathbb C\otimes_{\mathbb Z} K_0(B\text{-mod}^\Delta)$  is isomorphic to $ V(\varpi_{\frac{\delta_0-1}{2}})$, where $\mfg$ is a Lie subalgebra of $\mathfrak {sl}_{K}$ (see Definition~\ref{liesub}). When $p=0$, standard $B$-modules and
projective covers of simple $B$-modules correspond to monomial basis and  so-called quasi-canonical basis of $V(\varpi_{\frac{\delta_0-1}{2}}) $, respectively.
 \end{abstract}

\section{Introduction}\label{affb}

Brauer algebras were introduced in \cite{B} so as to study decomposition of tensor product of natural modules for orthogonal and symplectic groups over $\mathbb C$. They are associative  algebras defined over a commutative ring $K$ containing the multiplicative identity $1$.
When $K$ is the complex field,  decomposition numbers of Brauer algebras  have been computed in \cite{CV, ES1} via Kazhdan-Lusztig polynomials associated to the Weyl group of type $D$.  See also \cite{RS2} for Birman-Murakami-Wenzl algebras where Kazhdan-Lusztig polynomials associated to affine Weyl groups of classical types come into play.
Unlike Ariki's remarkable work   in \cite{Ari},  Lie  algebra actions were not available on the  Grothendieck groups of Brauer algebras and  Birman-Murakami-Wenzl algebras at that time.

  Recently, the authors\cite{RS3} introduced  (affine) Brauer categories and cyclotomic Brauer categories.
A special case of their Brauer category is isomorphic to the  Brauer diagram category $\B(\delta_0)$ in \cite{LZ}. The additive Karoubi envelope of
  $\B(\delta_0)$(which  is known as the Deligne category $\underline{Rep~ } O_{\delta_0}$\cite{De1, De2}) has already been studied recently so as  to classify the indecomposable direct  summands of tensor powers of the standard representation of orthosymplectic supergroup\cite{CH}.
Motivated by Reynolds' work on oriented Brauer categories~\cite{Re}  and Brundan's work on oriented skein  categories~\cite{Br},
we  consider the infinite dimensional associative algebra $B$ associated to $\B(\delta_0)$ over an   algebraically closed   field $K$. This is
   a locally unital and locally finite dimensional  $K$-algebra which  has a triangular decomposition. In \cite{CZ}, this  triangular structure of $B$ was considered  so as to study certain truncations of $B$. Let $B$-lfdmod be the category of all locally finite dimensional  left $B$-modules. We show that $B$-lfdmod is an upper finite fully  stratified category in the sense of \cite{BS}.  It contains  standard objects
  $
\{\Delta(\lambda)\mid    \lambda\in \Lambda_p\}$ and proper standard objects $
\{\bar\Delta(\lambda)\mid    \lambda\in \Lambda_p\}$,  where $\Lambda_p$ is the set of all $p$-regular partitions.
Any simple $B$-module can be realized as the simple head $L(\lambda)$ of some $\bar\Delta(\lambda)$,  and   simple $B$-modules can be classified in this way.

Consider the subcategory $B\text{-mod}^\Delta$ of $B$-lfdmod such that each object has a finite  $\Delta$-flag. The Grothendieck group $K_0(B\text{-mod}^\Delta)$ has two basis $\{[\Delta(\lambda)]\mid \lambda\in \Lambda_p\}$ and $\{[P(\lambda)]\mid \lambda\in \Lambda_p\}$, where $P(\lambda)$ is the projective cover of $L(\lambda)$ for any $\lambda\in \Lambda_p$. We show that certain endofunctors of $B$-lfdmod induce
a $\mathfrak g$-module structure on $[K_0(B\text{-mod}^\Delta)]:=\mathbb C \otimes_{\mathbb Z} K_0(B\text{-mod}^\Delta) $, where  $\mfg$ is a Lie subalgebra of $\mathfrak{sl}_K$ (see Definition~\ref{liesub}), and $\mathfrak{sl}_K$ is either $\mathfrak{sl}_\infty$ or $\hat  {\mathfrak{sl}}_p$ depending on whether $p$  is zero or odd. Further, as $\mfg$-modules,  $[K_0(B\text{-mod}^\Delta)]$  is isomorphic to the   integrable highest weight $\mathfrak{sl}_K$-module $V(\varpi_{\frac{\delta_0-1}{2}})$  with the fundamental weight $\varpi_{\frac{\delta_0-1}{2}}$ as its highest weight (see Theorem~\ref{cateofg}). When $p=0$ and $\delta_0\in \mathbb Z$, $V(\varpi_{\frac{\delta_0-1}{2}})$ can be realized as $\bigwedge^{\infty}_d\mathbb W$, the $d$-sector  of semi-infinite wedge space $\bigwedge^{\infty}\mathbb W$ in \cite{BW}, where $\mathbb W$ is the restricted dual of the natural $\mathfrak{sl}_{\infty}$-module and $d=\frac{\delta_0}{2}-1$. In this case, $\{[\Delta(\lambda)]\mid \lambda\in \Lambda_p\}$ corresponds to the monomial basis of $\bigwedge_d^{\infty}\mathbb W$ and $\{[P(\lambda)]\mid \lambda\in \Lambda_p\}$ corresponds to the so-called quasi-canonical basis   of $\bigwedge_d^{\infty}\mathbb W$ (see Theorem~\ref{isoophi} and Theorem~\ref{amisd}),  and each entry of  transition matrix between monomial basis and quasi-canonical basis can be expressed via parabolic Kazhdan-Lusztig polynomial associated to the Weyl group of type $D$ with maximal parabolic subgroup of type $A$.

There is a collection of mutually orthogonal idempotents in $B$ which induce exact idempotent truncation functors from $B$-lfdmod to the representation category of Brauer algebras (see Proposition~\ref{xeijdiec}). So,  representations of  all  Brauer algebras can be reflected in  $ B\text{-lfdmod}$. Via the above idempotent truncation functors  and the BGG type reciprocity in $B$-lfdmod (see Corollary~\ref{xijixs}(2)), the combinatorics of the representation theory of  Brauer algebras given in \cite{CV, ES1} are reformulated in the  quasi-canonical basis   of $\bigwedge_d^{\infty}\mathbb W$ via categorification.
 In other words, our result about the categorification theorem  on $K_0(B\text{-mod}^\Delta)$ can also be considered as
the generalization of the well-known categorification theorem related to all symmetric groups (c.f. \cite[Theorem~9.5.1]{Kle}). In a sequel,  the current results will be generalized so as to deal with Brauer-type algebras such as cyclotomic Nazarov-Wenzl algebras and (cyclotomic) Birman-Murakami-Wenzl algebras.
 Moreover, parallel to the existence of a categorical action of a Kac-Moody 2-category on the additive Karoubi envelope of oriented Brauer category and oriented skein category \cite{Br},  there should be a strong categorical action on the additive Karoubi envelope of Brauer category and its quantum analog, i.e, a 2-representation of the 2-category associated to certain quantum symmetric pairs \cite{BSWW}.

The content of this paper is organized as follows. In section~2, we study the locally unital algebra $B$ associated to the Brauer category $\B(\delta_0)$. In section~3, we study induction and restriction functors. Via them, we define Kashiwara operators on $[K_0(B\text{-mod}^\Delta)]$ in section~4,  and  prove that as $\mfg$-modules $[K_0(B\text{-mod}^\Delta)]$ is isomorphic to $V(\varpi_{\frac{\delta_0-1}{2}})$. Finally, we establish  Kazhdan-Lusztig theory for  $[K_0(B\text{-mod}^\Delta)]$ when $p=0$ and $\delta_0\in \mathbb Z$ in section~5. This will show that our two basis can be considered as monomial and quasi-canonical basis of $[K_0(B\text{-mod}^\Delta)]$.

\subsection*{Acknowledgements}The authors are grateful to the referee for his/her suggestions
on the presentation of the paper, especially
on updating  the  precise definition of upper finite fully stratified category in \cite{BS} and  some important references\cite{BS, BSWW,CH,CZ}.

\section{Brauer category and its associated algebra}
 Throughout,   we assume that $K$ is an algebraically closed field of characteristic  $p\neq 2$.
 The aim of this section is to study the representation theory of the locally unital  algebra associated to the Brauer category  $\B(\delta_0)$  with defining parameter $\delta_0\in K$.
 Before we give their definitions,  we recall the notion of a strict $K$-linear monoidal category as follows.

A strict monoidal category   $(\C,\otimes, \mathbf 1 )$ is a category equipped with a bifunctor $\otimes: \C\times \C\rightarrow\C$ and a unit object $\mathbf 1$ such that
$(\ob  a\otimes \ob b)\otimes \ob c=\ob a\otimes (\ob b\otimes \ob c)$ and $\mathbf 1\otimes \ob a=\ob a=\ob a\otimes \mathbf 1$ for any objects $\ob a,\ob b,\ob c$, and
$(f\otimes g)\otimes h=f\otimes (g\otimes h)$ and $1_{\mathbf 1}\otimes f=f=f\otimes 1_{\mathbf 1}$, for all morphisms $f,g,h$, where $1_{\ob a}:\ob a\rightarrow \ob a $ is the identity morphism.
A $K$-linear  category $\C$  is a category such that each hom-set $\Hom_\C ( \ob a, \ob b)$ has the $K$-linear structure, and composition of morphisms is $K$-bilinear. A strict $K$-linear monoidal category $\C$  is a strict monoidal category and a $K$-linear category  such that tensor product of morphisms is $K$-bilinear.

We need string calculus in a strict $K$-linear monoidal category $\C$  (c.f. \cite[\S1.6]{VT}).
For any two objects $\ob a,\ob b$ in  $\C$,  $\ob a\ob b$ represents  $\ob a\otimes \ob b$.
 A  morphism $g:\ob a\to \ob b$ is drawn as
    $$\begin{tikzpicture}[baseline = 12pt,scale=0.5,color=\clr,inner sep=0pt, minimum width=11pt]
        \draw[-,thick] (0,0) to (0,2);
        \draw (0,1) node[circle,draw,thick,fill=white]{$g$};
        \draw (0,-0.2) node{$\ob a$};
        \draw (0, 2.3) node{$\ob b$};
    \end{tikzpicture}
     \quad\text{ or simply as }
    \begin{tikzpicture}[baseline = 12pt,scale=0.5,color=\clr,inner sep=0pt, minimum width=11pt]
        \draw[-,thick] (0,0) to (0,2);
        \draw (0,1) node[circle,draw,thick,fill=white]{$g$};
    \end{tikzpicture}$$
 if there is no confusion on the objects. Note that $\ob a$ is  at the bottom while $\ob b$ is at the top.
 The composition  (resp., tensor product) of two morphisms  is given by vertical stacking (resp., horizontal concatenation):
\begin{equation}\label{com1}
      g\circ h= \begin{tikzpicture}[baseline = 19pt,scale=0.5,color=\clr,inner sep=0pt, minimum width=11pt]
        \draw[-,thick] (0,0) to (0,3);
        \draw (0,2.2) node[circle,draw,thick,fill=white]{$g$};
        \draw (0,0.8) node[circle,draw,thick,fill=white]{$h$};
    \end{tikzpicture}
    ~,~ \ \ \ \ \ g\otimes h=\begin{tikzpicture}[baseline = 19pt,scale=0.5,color=\clr,inner sep=0pt, minimum width=11pt]
        \draw[-,thick] (0,0) to (0,3);
        \draw[-,thick] (2,0) to (2,3);
        \draw (0,1.5) node[circle,draw,thick,fill=white]{$g$};
        \draw (2,1.5) node[circle,draw,thick,fill=white]{$h$};
    \end{tikzpicture}~.
\end{equation}
By definition,
$ (f\otimes g)\circ(k\otimes h)=(f\circ k)\otimes (g\circ h)$
for any $f\in\Hom_{\C}(\ob b,\ob c)$, $g\in\Hom_{\C}(\ob n,\ob t )$, $k\in\Hom_{\C}(\ob a,\ob b)$, $h\in\Hom_{\C}(\ob m,\ob n)$.
This is known as the {\em interchange law}, and it implies that diagrams that are ``rectilinearly isotropic" define the same morphism in $\C$. So
both  $(f\otimes g)\circ(k\otimes h)$ and $(f\circ k)\otimes (g\circ h)$  may be  drawn as
 \begin{equation*}
    \begin{tikzpicture}[baseline = 19pt,scale=0.5,color=\clr,inner sep=0pt, minimum width=11pt]
        \draw[-,thick] (0,0) to (0,3);
        \draw[-,thick] (2,0) to (2,3);
        \draw (0,2.2) node[circle,draw,thick,fill=white]{$f$};
        \draw (2,2.2) node[circle,draw,thick,fill=white]{$g$};
        \draw (0,0.8) node[circle,draw,thick,fill=white]{$k$};
        \draw (2,0.8) node[circle,draw,thick,fill=white]{$h$};
    \end{tikzpicture}.
\end{equation*}

The affine Brauer category $\AB$ has been introduced in \cite{RS3}. It is the strict $K$-linear  monoidal category
  generated by a single object
 \begin{tikzpicture}[baseline = 10pt, scale=0.5, color=\clr]
                \draw[-,thick] (0,0.5)to[out=up,in=down](0,1.2);
    \end{tikzpicture}.   Therefore, the set of    objects in  $\AB$ is $\{
\begin{tikzpicture}[baseline = 10pt, scale=0.5, color=\clr]
                \draw[-,thick] (0,0.5)to[out=up,in=down](0,1.2);
    \end{tikzpicture}^{\otimes m} \mid m\in \mathbb N\}$.
     To simplify the notation, we denote
$ \begin{tikzpicture}[baseline = 10pt, scale=0.5, color=\clr]
                \draw[-,thick] (0,0.5)to[out=up,in=down](0,1.2);
                    \end{tikzpicture}^{\otimes m}$
by $\ob m$. So, the unit object in $\AB$  is  $\ob 0$, while the object
\begin{tikzpicture}[baseline = 10pt, scale=0.5, color=\clr] \draw[-,thick] (0,0.5)to[out=up,in=down](0,1.2);
    \end{tikzpicture}
    is  $\ob 1$. The morphisms in $\AB$ are  generated by   four  morphisms  $U=\lcup:\ob0\rightarrow \ob2$, $ A=\lcap:\ob2\rightarrow\ob0$, $ S=\swap: \ob2\rightarrow\ob2$   and  $X=\xdot:\ob1\rightarrow\ob1$,  subject to \eqref{B1}--\eqref{AB2} as follows:
     \begin{equation}\label{B1}
        \begin{tikzpicture}[baseline = 10pt, scale=0.5, color=\clr]
            \draw[-,thick] (0,0) to[out=up, in=down] (1,1);
            \draw[-,thick] (1,1) to[out=up, in=down] (0,2);
            \draw[-,thick] (1,0) to[out=up, in=down] (0,1);
            \draw[-,thick] (0,1) to[out=up, in=down] (1,2);
                    \end{tikzpicture}
        ~=~
        \begin{tikzpicture}[baseline = 10pt, scale=0.5, color=\clr]
            \draw[-,thick] (0,0) to (0,1);
            \draw[-,thick] (0,1) to (0,2);
            \draw[-,thick] (1,0) to (1,1);
            \draw[-,thick] (1,1) to (1,2);
        \end{tikzpicture}~
        ,\qquad
        \begin{tikzpicture}[baseline = 10pt, scale=0.5, color=\clr]
            \draw[-,thick] (0,0) to[out=up, in=down] (2,2);
            \draw[-,thick] (2,0) to[out=up, in=down] (0,2);
            \draw[-,thick] (1,0) to[out=up, in=down] (0,1) to[out=up, in=down] (1,2);
        \end{tikzpicture}
        ~=~
        \begin{tikzpicture}[baseline = 10pt, scale=0.5, color=\clr]
            \draw[-,thick] (0,0) to[out=up, in=down] (2,2);
            \draw[-,thick] (2,0) to[out=up, in=down] (0,2);
            \draw[-,thick] (1,0) to[out=up, in=down] (2,1) to[out=up, in=down] (1,2);
        \end{tikzpicture}~,
    \end{equation}
    \begin{equation}\label{B2}
        \begin{tikzpicture}[baseline = 10pt, scale=0.5, color=\clr]
            \draw[-,thick] (2,0) to[out=up, in=down] (2,1) to[out=up, in=right] (1.5,1.5) to[out=left,in=up] (1,1);
            \draw[-,thick] (1,1) to[out=down,in=right] (0.5,0.5) to[out=left,in=down] (0,1) to[out=up,in=down] (0,2);
        \end{tikzpicture}
        ~=~
        \begin{tikzpicture}[baseline = 10pt, scale=0.5, color=\clr]
            \draw[-,thick] (0,0) to (0,1);
            \draw[-,thick] (0,1) to (0,2);
        \end{tikzpicture}
        ~=~
        \begin{tikzpicture}[baseline = 10pt, scale=0.5, color=\clr]
            \draw[-,thick] (2,2) to[out=down, in=up] (2,1) to[out=down, in=right] (1.5,0.5) to[out=left,in=down] (1,1);
            \draw[-,thick] (1,1) to[out=up,in=right] (0.5,1.5) to[out=left,in=up] (0,1) to[out=down,in=up] (0,0);
        \end{tikzpicture}~,
    \end{equation}

\begin{equation}\label{B3}
    \begin{tikzpicture}[baseline = 5pt, scale=0.5, color=\clr]
        \draw[-,thick] (0,1) to[out=down,in=left] (0.5,0.35) to[out=right,in=down] (1,1);
    \end{tikzpicture}
    ~=~
    \begin{tikzpicture}[baseline = 5pt, scale=0.5, color=\clr]
        \draw[-,thick] (0,1) to[out=down,in=up] (1,0) to[out=down,in=right] (0.5,-0.5) to[out=left,in=down] (0,0) to[out=up,in=down] (1,1);
    \end{tikzpicture}
    ~,\qquad
    \begin{tikzpicture}[baseline = 5pt, scale=0.5, color=\clr]
        \draw[-,thick] (0,0) to[out=up,in=left] (0.5,0.65) to[out=right,in=up] (1,0);
    \end{tikzpicture}
    ~=~
    \begin{tikzpicture}[baseline = 5pt, scale=0.5, color=\clr]
        \draw[-,thick] (0,0) to[out=up,in=down] (1,1) to[out=up,in=right] (0.5,1.5) to[out=left,in=up] (0,1) to[out=down,in=up] (1,0);
    \end{tikzpicture}~,
    \end{equation}

    \begin{equation}\label{B4}
    \begin{tikzpicture}[baseline = 10pt, scale=0.5, color=\clr]
                \draw[-,thick] (1,0) to[out=up,in=right] (0.5,1.5) to[out=left,in=up] (0,1) to[out=down,in=up] (0,0);
        \draw[-,thick] (0.5,0) to[out=up,in=down] (1.3,1.5);
    \end{tikzpicture}~=~
    \begin{tikzpicture}[baseline = 10pt, scale=0.5, color=\clr]
         \draw[-,thick] (1,0) to[out=up,in=right] (0.5,1.5) to[out=left,in=up] (0.2,1) to[out=down,in=up] (0.2,0);
        \draw[-,thick] (0.7,0) to[out=up,in=down] (0,1.5);
    \end{tikzpicture}
    ~,\quad\quad\begin{tikzpicture}[baseline = 10pt, scale=0.5, color=\clr]
        \draw[-,thick]  (2,1.5) to[out=down, in=right] (1.5,0) to[out=left,in=down] (1,1.5);
        \draw[-,thick] (0.7,0) to[out=up,in=down] (1.5,1.5);
    \end{tikzpicture}~=~
    \begin{tikzpicture}[baseline = 10pt, scale=0.5, color=\clr]
        \draw[-,thick]  (2,1.5) to[out=down, in=right] (1.5,0) to[out=left,in=down] (1,1.5);
        \draw[-,thick] (2.3,0)to[out=up,in=down](1.5,1.5);
    \end{tikzpicture}~,
\end{equation}

   \begin{equation}\label{AB1}
                \begin{tikzpicture}[baseline = 7.5pt, scale=0.5, color=\clr]
            \draw[-,thick] (0,0) to[out=up, in=down] (1,2);
            \draw[-,thick] (0,2) to[out=up, in=down] (0,2.2);
            \draw[-,thick] (1,0) to[out=up, in=down] (0,2);
            \draw[-,thick] (1,2) to[out=up, in=down] (1,2.2);
             \draw(0,1.9)\bdot;
        \end{tikzpicture}
        ~-~
        \begin{tikzpicture}[baseline = 7.5pt, scale=0.5, color=\clr]
            \draw[-,thick] (0,0) to[out=up, in=down] (1,2);\draw[-,thick] (0,0) to[out=up, in=down] (0,-0.2);
             \draw[-,thick] (1,0) to[out=up, in=down] (0,2);\draw[-,thick] (1,0) to[out=up, in=down] (1,-0.2);
                        \draw(1,0.1)\bdot;
        \end{tikzpicture}
        ~=~
       \begin{tikzpicture}[baseline = 10pt, scale=0.5, color=\clr]
          \draw[-,thick] (2,2) to[out=down,in=right] (1.5,1.5) to[out=left,in=down] (1,2);
            \draw[-,thick] (2,0) to[out=up, in=right] (1.5,0.5) to[out=left,in=up] (1,0);
        \end{tikzpicture}
        ~-~
        \begin{tikzpicture}[baseline = 7.5pt, scale=0.5, color=\clr]
            \draw[-,thick] (0,0) to[out=up, in=down] (0,2);
            \draw[-,thick] (1,0) to[out=up, in=down] (1,2);
                   \end{tikzpicture}~,
    \end{equation}

  \begin{equation}\label{AB2}
               \begin{tikzpicture}[baseline = 10pt, scale=0.5, color=\clr]
            \draw[-,thick] (2,0) to[out=up, in=down] (2,1) to[out=up, in=right] (1.5,1.5) to[out=left,in=up] (1,1);
            \draw[-,thick] (1,1) to[out=down,in=right] (0.5,0.5) to[out=left,in=down] (0,1) to[out=up,in=down] (0,2);
           \draw( 1,1) \bdot;
        \end{tikzpicture}
        ~=~
        -\begin{tikzpicture}[baseline = 10pt, scale=0.5, color=\clr]
            \draw[-,thick] (0,0) to (0,2);
            \draw(0,1) \bdot;
                    \end{tikzpicture}
        ~=~
        \begin{tikzpicture}[baseline = 10pt, scale=0.5, color=\clr]
            \draw[-,thick] (2,2) to[out=down, in=up] (2,1) to[out=down, in=right] (1.5,0.5) to[out=left,in=down] (1,1);
            \draw[-,thick] (1,1) to[out=up,in=right] (0.5,1.5) to[out=left,in=up] (0,1) to[out=down,in=up] (0,0);
            \draw( 1,1) \bdot;
        \end{tikzpicture}~.
    \end{equation}

Each endpoint  at both rows of the above diagrams represents  $\ob 1$. If there is no endpoint at a row, then the object at this row is  $\ob 0$.
For any $m>0$, the identity morphism $1_{\ob m}$  is drawn as the object itself. For example,  $\begin{tikzpicture}[baseline = 10pt, scale=0.5, color=\clr]
                \draw[-,thick] (0,0.5)to[out=up,in=down](0,1.2);\draw[-,thick] (0.5,0.5)to[out=up,in=down](0.5,1.2);
    \end{tikzpicture}$ represents  $\text{1}_{\ob 2}$.

    In \cite{RS3}, we have proved that  the Brauer category $\B$ is isomorphic to the subcategory of $\AB$, whose objects are the same as those for $\AB$ and whose morphisms are generated by   $A, U$ and $S$ satisfying \eqref{B1}--\eqref{B4}. So, one can identify $\B$ as a subcategory of $\AB$.
    For any $k\in\mathbb N$, define 
 \begin{equation}\label{defofdelta}
\Delta_k:= \begin{tikzpicture}[baseline = 5pt, scale=0.5, color=\clr]
        \draw[-,thick] (0.6,1) to (0.5,1) to[out=left,in=up] (0,0.5)
                        to[out=down,in=left] (0.5,0)
                        to[out=right,in=down] (1,0.5)
                        to[out=up,in=right] (0.5,1);
        \draw (0,0.5) \bdot;
        \draw (-0.4,0.5) node{\footnotesize{$k$}};
    \end{tikzpicture}= \lcap\circ (\xdot~ \begin{tikzpicture}[baseline = 5pt, scale=0.5, color=\clr]
     \draw[-,thick] (0,0.15) to (0,1.15); \end{tikzpicture} ~)^k\circ \lcup\in \End_{\AB}(\ob 0).
\end{equation}
  Thanks to our previous results on basis of morphism spaces in   $\AB$~\cite[Theorem~B]{RS3}, $\AB$  can be viewed as a $K[\Delta_0, \Delta_2, \Delta_4, \ldots]$-linear category with $\Delta_kg:=\Delta_k\otimes g$ for any $g\in\Hom_{\AB}(\ob m,\ob s)$.
   Later on, we always assume that $ \delta=\{\delta_i\in K\mid  i\in \mathbb N\}$.
Suppose that  $\Delta_j$'s are specialized at scalars $\delta_j$'s. It is proved in \cite{RS3} that
\begin{equation}\label{admomega}
2\delta_k=-\delta_{k-1}+\sum_{j=1}^{k}(-1)^{j-1}\delta_{j-1}\delta_{k-j}, \text{ for } k=1,3,\ldots.
    \end{equation}
So, $\delta$  is  admissible in the sense of \cite[Definition~2.10]{AMR}.
In \cite{RS3}, we define $\AB(\delta)$ to be the $K$-linear category (not a monoidal category) obtained from $\AB$ by specializing $\Delta_{2j}$ at $\delta_{2j}$, i.e.,
 $$\AB(\delta):=K\otimes _{K[\Delta_0, \Delta_2,\Delta_4,\ldots]}\AB$$ where $K$ is the  $K[\Delta_2, \Delta_4,\ldots]$-module on which $\Delta_{2j}$ acts  as $\delta_{2j}$ for any  $j\geq 1$.

Let  $f(t)=\prod_{1\leq i\leq a}(t-u_i)\in K[t]$, where $a\in \mathbb N\setminus\{0\}$. If    $\delta$ satisfies \eqref{admomega}, we define the \emph{cyclotomic Brauer category} $\CB(\delta)^f$ to be the quotient category $\AB/I$, where  $I$ is  the right tensor ideal of $\AB$ generated by $f(\xdot)$ and $\Delta_j-\delta_j$, $j\in\mathbb N$. Note that $\CB(\delta)^f$ is also not monoidal.
Let  $u$ be an indeterminate and $\mathbf u=\{u_1,u_2,\ldots,u_a\}$. Following  \cite[Definition~3.6,~Lemma~3.8]{AMR},   $\delta$ is called
$\mathbf u$-admissible    if
\begin{equation}\label{admc}
\sum_{i\geq 0} \frac{\delta_i}{ u^{i}}+u-\frac{1}{2}=(u-\frac{1}{2}(-1)^a)\prod_{i=1}^a \frac{u+u_i}{u-u_i}.
\end{equation}
In~\cite[Theorem~C]{RS3}, we proved that any morphism space of $\CB(\delta)^f$ is free (over an integral domain $K$ containing $1/2$)  with maximal rank if and only if $\delta$ is $\mathbf u$-admissible.
In the remaining part of the paper, we keep the  Assumption~\ref{ASS} as follows.
\begin{Assumption}\label{ASS}Fix $\delta_0\in  K$. Let $\delta=\{\delta_i\mid i\in\mathbb N\}$ such that $\delta_i=\delta_0(\frac{\delta_0-1}{2})^i$ for $i\geq1$.
Let   $f(t)=t-u_1$, where $u_1=\frac{\delta_0-1}{2}$.
\end{Assumption}

 Under the Assumption~\ref{ASS},   $\delta$ is $\mathbf u$-admissible in the sense of \eqref{admc}.
  Let $\B(\delta_0)$ be obtained from $\B$ by adding an additional condition $\Delta_0=\delta_0$. We have the functor $G:\B(\delta_0)\hookrightarrow \AB(\delta)\twoheadrightarrow \CB^f(\delta)$,  which is the composition of natural  embedding functor and canonical quotient  functor. Using basis theorem of morphism spaces in $\B(\delta_0)$ and
 $\CB^f(\delta)$\cite[Theorems ~A,C]{RS3}, we have that $G$ is an isomorphism of categories between   $\B(\delta_0)$ and  $\CB^f(\delta)$.      So, $\B(\delta_0)$  is both a subcategory and a quotient category  of $\AB(\delta)$.

 We are going to study the representation theory of  $\B(\delta_0)$. This is motivated by \cite{Re, Br} on locally unital algebras associated with  oriented Brauer category and oriented skein category.

 Suppose $m, s \in \mathbb N$. An $(m,s)$-\textsf{Brauer diagram} is a string diagram obtained by tensor product and composition of
  $A, U, S$  and the identity  morphism \begin{tikzpicture}[baseline = 10pt, scale=0.5, color=\clr]
                \draw[-,thick] (0,0.5)to[out=up,in=down](0,1.5);\end{tikzpicture}
     such that there are $m$ (resp., $s$) endpoints  on the bottom (resp., top) row of the  resulting diagram.  For example, the following is a $(7,3) $-Brauer diagram:
 \begin{equation}\label{example of m ,s}
\begin{tikzpicture}[baseline = 25pt, scale=0.35, color=\clr]
        \draw[-,thick] (5,0) to[out=up,in=down] (8,5);
         \draw[-,thick] (10,0) to[out=up,in=down] (6,5);
          \draw[-,thick] (2.6,5) to (2.5,5) to[out=left,in=up] (1.5,4)
                        to[out=down,in=left] (2.5,3)
                        to[out=right,in=down] (3.5,4)
                        to[out=up,in=right] (2.5,5);
        \draw[-,thick] (-1,0) to[out=up,in=left] (1,1.5) to[out=right,in=up] (3,0);
         \draw[-,thick] (2,0) to[out=up,in=left] (4,2.5) to[out=right,in=up] (6,0);
          \draw[-,thick] (12.6,4.5) to (12.5,4.5) to[out=left,in=up] (10.5,2.5)
                        to[out=down,in=left] (12.5,0.5)
                        to[out=right,in=down] (14.5,2.5)
                        to[out=up,in=right] (12.5,4.5);
                        \draw[-,thick] (11,0) to [out=up,in=down](12,1.7);   \draw[-,thick] (12,1.7)to [out=up,in=down](11,5);
           \end{tikzpicture}~, \end{equation}
where  loops are  called                           bubbles.
       In  $\B(\delta_0)$, each bubble is reduced to $\delta_0$. So,  we consider  $\mathbb{B}_{m,s}$,  the set of  all $(m,s)$-Brauer  diagrams without bubbles.
 We label  the  endpoints at the bottom (resp., top) row  of $d\in \mathbb B_{m,s}$ by  $1,2,\ldots, m$ (resp., $m+1, m+2,\ldots,m +s$) from left to right.
Two endpoints   are  paired if they    are connected  by a strand. Since each endpoint of $d$ is uniquely  connected with another  one, it  gives a partition of $\{1,2,\ldots, m+s\}$ into disjoint union of  pairs.
For example, the $(7,3)$-Brauer diagram in \eqref{example of m ,s} (forgetting the bubbles there)  gives the partition $\{\{1,3\}, \{2,5\}, \{4,9\},\{6,8\},\{7,10\}\}$.

A  strand of $d\in \mathbb{B}_{m,s}$ connecting the pairs on different rows (resp., the same row) is called a vertical  (resp.,  horizontal) strand. Moreover, A horizontal strand connecting the pairs on the top (resp., bottom) row is also called  a cup (resp., cap).
 We say that  $d, d'\in  \mathbb{B}_{m,s}$    are   \emph{equivalent} and write $d\sim d'$ if they  give  the same partition of $\{1,2,\ldots, m+s\}$ into disjoint union of pairs. So, the cardinality  of the set  $  \mathbb B_{m,s}/\sim$ is $(m+s-1)!!$ (resp., 0) if $m+s$ is even (resp., otherwise).
 Thanks to the following result, each equivalence class can be identified with any  Brauer diagram in it.

 \begin{Theorem}\label{basisofb}\cite{RS3} For any $m, s\in \mathbb N$, \begin{enumerate}\item $d=d'$ in $\B(\delta_0)$ if $d, d'\in \mathbb{B}_{m,s}$ and $d\sim d'$,
 \item  $\Hom_{\B(\delta_0)}(\ob m,\ob s)$  has $K$-basis   given by  $\mathbb{B}_{m,s}/\sim$.\end{enumerate}
 \end{Theorem}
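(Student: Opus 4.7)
The plan is to prove the two parts of Theorem~\ref{basisofb} separately: first establish that the defining relations \eqref{B1}--\eqref{B4} are enough to detect equivalence of diagrams by their pairing (part (1)), and then use this together with the known basis theorem for $\AB(\delta)$ to obtain a $K$-basis of $\Hom_{\B(\delta_0)}(\ob m,\ob s)$ (part (2)). Throughout I will use the interchange law freely.

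For part (1), the strategy is to put every $d\in\mathbb B_{m,s}$ into a \emph{normal form} that depends only on the equivalence class $[d]\in\mathbb B_{m,s}/\sim$. First I would use \eqref{B2} (the zig-zag/snake relation) to straighten out all \lq\lq wavy\rq\rq\ vertical strands, and then use \eqref{B3} (which lets cups and caps slide past crossings) and \eqref{B4} (naturality of cups and caps for the crossing) to migrate every cap to the bottom portion of the diagram and every cup to the top portion. What remains in the middle is a permutation diagram built only out of $S$ and identity strands, and the braid relations in \eqref{B1} say that any two reduced words for the same permutation of the remaining endpoints produce the same morphism. Putting these three normalizations together, $d$ is rewritten as (caps at bottom) $\otimes$ (permutation) $\otimes$ (cups at top), a form determined entirely by the matching. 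Hence $d\sim d'$ forces $d=d'$ in $\B(\delta_0)$.

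For part (2), spanning is immediate: $\B(\delta_0)$ is generated as a strict monoidal category by $A,U,S$ and the identities, and any composition/tensor of these generators is, up to $K$-linearity, a formal Brauer diagram $D$ with some number $k\ge 0$ of bubbles; using \eqref{B2} to pull each bubble off to the side and then applying $\Delta_0=\delta_0$ rewrites $D$ as $\delta_0^{k}d$ for a bubble-free $d\in\mathbb B_{m,s}$. By part (1), $d$ depends only on its equivalence class in $\mathbb B_{m,s}/\sim$, so $\mathbb B_{m,s}/\sim$ spans $\Hom_{\B(\delta_0)}(\ob m,\ob s)$.

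The main obstacle is linear independence of $\mathbb B_{m,s}/\sim$, which is where the earlier results from \cite{RS3} do the heavy lifting. The plan is to exploit the embedding/quotient picture stated just before the theorem: the composite $G\colon \B(\delta_0)\hookrightarrow \AB(\delta)\twoheadrightarrow \CB^f(\delta)$ is an isomorphism of categories. Since $\delta$ is $\mathbf u$-admissible under Assumption~\ref{ASS}, \cite[Theorem~C]{RS3} supplies a basis of $\Hom_{\CB^f(\delta)}(\ob m,\ob s)$ of the correct cardinality $(m+s-1)!!$ when $m+s$ is even (and zero otherwise), and this basis can be chosen to consist of (images of) bubble-free Brauer diagrams, one from each equivalence class in $\mathbb B_{m,s}/\sim$. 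Transporting this basis back along $G^{-1}$ yields linear independence of $\mathbb B_{m,s}/\sim$ in $\Hom_{\B(\delta_0)}(\ob m,\ob s)$, completing part (2). The hardest step conceptually is thus not in the diagrammatic reduction of (1) and the spanning half of (2), but in invoking the affine/cyclotomic basis theorem from \cite{RS3}; alternatively one can replace that input by checking faithfulness of the representation of $\B(\delta_0)$ on tensor powers of the natural module of the appropriate orthogonal or symplectic (super)group when $\delta_0$ is a suitable integer, and then extending to general $\delta_0\in K$ by a standard Zariski-density/specialization argument.
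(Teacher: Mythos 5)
The paper does not actually prove Theorem~\ref{basisofb}: it is quoted verbatim from \cite{RS3}, where it is the basis theorem (Theorem~A) for the Brauer category, so the ``paper's proof'' is a citation. Your proposal supplies a genuine argument, and its skeleton is sound: the normal-form reduction via \eqref{B1}--\eqref{B4} for part (1) and the spanning half of part (2), with bubbles evaluated to $\delta_0$, is the standard diagrammatic argument (it is essentially how \cite{LZ} and \cite{RS3} proceed, and the sketch is thin exactly where those sources spend their effort, namely showing that the rewriting lands on a representative depending only on the pairing); and outsourcing linear independence to the cyclotomic basis theorem \cite[Theorem~C]{RS3} is legitimate, because under Assumption~\ref{ASS} one has $\deg f=1$, so the basis of $\Hom_{\CB^f(\delta)}(\ob m,\ob s)$ consists precisely of the $(m+s-1)!!$ undotted Brauer diagram classes. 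One caveat: as phrased, your independence step is circular relative to the paper's logic. You ``transport the basis back along $G^{-1}$'', but in the paper the invertibility of $G:\B(\delta_0)\to\CB^f(\delta)$ is itself deduced from Theorems~A and~C of \cite{RS3}, i.e.\ from the very statement you are proving. The fix is easy and worth making explicit: you only need the existence of the $K$-linear functor $G$ (the composite $\B(\delta_0)\to\AB(\delta)\twoheadrightarrow\CB^f(\delta)$, which exists since the specialization $\Delta_0\mapsto\delta_0$ is compatible), together with the fact that $G$ sends distinct diagram classes to distinct members of the Theorem~C basis; linear independence of the images under a linear functor already forces independence of the sources, and $G$ being an isomorphism then comes out as a corollary rather than an input. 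Your alternative route via faithfulness on tensor powers of the natural module plus specialization is the Lehrer--Zhang strategy; it works, but to cover arbitrary $\delta_0\in K$ (and $p>0$) it should be set up over a polynomial ring $K[t]$ (or $\mathbb Z[t]$), proving the diagrams form a basis there and then base-changing by right-exactness, rather than appealing loosely to Zariski density at the level of a fixed field.
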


 A locally unital algebra is a non-unital associative $K$-algebra $  H$ containing a distinguished collection
of mutually orthogonal idempotents $\{1_m\mid m\in J\}$, for some index set $J$, such that
$ H=\bigoplus_{m,n\in J}1_{m}H1_n$.

\textsf{Unless otherwise stated, all modules in this paper are assumed to be left modules}. Following \cite{Re},  we consider  $H$-mod, the  category of locally unital $H$-modules. In other words,  any object $V\in  H\text{-mod}$ is of form
$$V=\bigoplus_{m\in J}1_{  m}V,$$
 where nonzero spaces $1_{ m}V$ are called \textsf{weight spaces} of $V$. If  $\text{dim}1_mV<\infty$ for all $m\in J$, then
 $V$ is called locally finite dimensional. Let $H\text{-lfdmod}$ be the subcategory of   $H\text{-mod}$ consisting of all locally finite dimensional modules. Denote by
   $H\text{-pmod}$ the subcategory of  $H\text{-mod}$ consisting of all finitely generated projective modules.

A locally unital algebra $H$ is called locally finite dimensional if  each $1_{m}H1_n$ is of finite dimensional for all $m,n\in J$.
Thanks to Theorem~\ref{basisofb},  $B$ is a  locally unital and  locally finite dimensional algebra, where
\begin{equation}\label{Bal}
B:= \bigoplus_{m,n\in \mathbb N}\Hom_{\B(\delta_0)}(\ob m,\ob n).
\end{equation}
 The set  $\{1_{\ob m}\in\B(\delta_0) \mid m\in \mathbb N\}$  serves as the system of  mutually orthogonal idempotents of $B$ such that  $1_{\ob n} B1_{\ob m}=\Hom_{\B(\delta_0)}(\ob m,\ob n)$  for all $m,n\in\mathbb N$.
The multiplication  on $B$ is defined as   $$g h=\begin{cases} g\circ h, &\text{if $m=t$,}\\ 0, &\text{otherwise,}\\
\end{cases} \\ \quad \text{for any  $(g, h)\in \Hom_{\B(\delta_0)}(\ob m,\ob n) \times \Hom_{\B(\delta_0)}(\ob s,\ob t)$.}$$
Suppose $V\in B$-mod and $L$ is a  simple $B$-module.  Motivated by \cite{Re}, we  define  $$[V:L]=\text{sup}\sharp\{i\mid V_{i+1}/V_i\cong L\},$$ the supremum being taken over all filtrations by submodules $0=V_0\subset \cdots \subset V_n=V$. The integer
$[V:L]$ is called the composition multiplicity of $L$ in $V$, although  $V$ may not have a composition series.

 The results in Proposition~\ref{usdeks}  are special cases of those   for any
 locally unital and locally finite dimensional algebra in  \cite[Section~II.2]{Re}(see also \cite[Section~2.2]{BRUNDAN}).

\begin{Prop}\label{usdeks} Suppose $B$ is the locally unital and locally finite dimensional algebra in \eqref{Bal}.
\begin{enumerate}
\item [(1)] Every finitely generated $B$-module is locally finite dimensional.
\item [(2)] If $L$ is a simple $B$-module, then $\End_{B}(L)=K$.
\item [(3)] For any finitely generated $B$-module $ V$ and any locally finite dimensional $B$-module $W $, $\dim\Hom_{B}(V,W)<\infty$.
\item [(4)] Every finitely generated $B$-module can be  decomposed  into a direct sum of  finitely many indecomposable modules and this decomposition is unique (up to a permutation).
\item [(5)] Every finitely generated  $B$-module has a projective cover.
\item [(6)] If $V\in B$-{\rm lfdmod}, then $[V: L]=\dim \Hom_{B}(P,V)<\infty$  for any simple module $L$, where $P$
is the projective cover of $L$.
\end{enumerate}
\end{Prop}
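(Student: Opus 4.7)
The proof plan is to treat this as an instance of general facts about locally unital, locally finite dimensional algebras, with the single crucial input from Theorem~\ref{basisofb} that each $1_{\ob s}B1_{\ob m}=\Hom_{\B(\delta_0)}(\ob m,\ob s)$ is finite dimensional. Everything else should follow from standard arguments familiar from finite dimensional algebra once this finiteness is in hand. I would prove the six items in the order (1), (3), (2), (4), (5), (6), since later items depend on the earlier ones.

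For (1), if $V$ is generated by $v_1,\dots,v_k$, then each $v_i$ lies in a finite sum $\bigoplus_{j} 1_{\ob{n}_{i,j}}V$. Then $1_{\ob s}V$ is spanned by elements of the form $1_{\ob s}bv_i$ with $b\in B$, and since $1_{\ob s}b$ decomposes as $\sum_{\ob n} 1_{\ob s}b1_{\ob n}$, only the finitely many summands with $\ob n\in\{\ob{n}_{i,j}\}$ contribute. Each $1_{\ob s}B1_{\ob n_{i,j}}$ is finite dimensional by Theorem~\ref{basisofb}, so $1_{\ob s}V$ is finite dimensional. For (3), any $\varphi\in\Hom_B(V,W)$ is determined by its values $\varphi(v_i)\in\bigoplus_j 1_{\ob{n}_{i,j}}W$, giving an injection of $\Hom_B(V,W)$ into a finite direct sum of finite dimensional weight spaces of $W$.

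For (2), apply (1) to see $L$ is locally finite dimensional; pick any $\ob m$ with $1_{\ob m}L\neq 0$. Any $\varphi\in\End_B(L)$ preserves weight spaces, so restricts to an endomorphism of the finite dimensional space $1_{\ob m}L$, and since $K$ is algebraically closed it has an eigenvalue $\lambda$. Then $\varphi-\lambda\cdot\id$ has nonzero kernel as a $B$-submodule of $L$, so simplicity forces $\varphi=\lambda\cdot\id$. For (4), (3) gives $\End_B(V)<\infty$ for finitely generated $V$, so this endomorphism ring is semiperfect, idempotents lift, and Krull--Schmidt holds in the usual way. For (5), each $B1_{\ob m}$ is itself finitely generated, hence has finite dimensional endomorphism ring $\End_B(B1_{\ob m})\cong 1_{\ob m}B1_{\ob m}$; decomposing the identity in this finite dimensional algebra into primitive idempotents $e_1,\dots,e_r$ yields $B1_{\ob m}=\bigoplus_i Be_i$ with each $Be_i$ indecomposable projective. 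Every simple $L$ is a quotient of some $B1_{\ob m}$ (take any $\ob m$ with $1_{\ob m}L\neq 0$), hence of some $Be_i$, and standard semiperfect arguments then produce the projective cover.

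For (6), let $P\twoheadrightarrow L$ be the projective cover. By (2) one has $\Hom_B(P,L)\cong K$, and $\Hom_B(P,L')=0$ for any other simple $L'$, because a nonzero map $P\to L'$ factors through $L$ (the unique simple quotient of $P$). The formula $[V:L]=\dim\Hom_B(P,V)$ then follows by filtering $V$: exhaust $V$ by finitely generated (hence locally finite dimensional) submodules and observe that $\Hom_B(P,-)$ is exact on short exact sequences because $P$ is projective; comparing the contribution to each weight space $1_{\ob m}V$ — which is genuinely finite dimensional by local finiteness of $V$ — gives a well-defined finite count matching the supremum defining $[V:L]$. The main technical point to be careful about is that $V$ need not admit a composition series, so the supremum $[V:L]$ has to be computed via the filtration-invariant dimension $\dim\Hom_B(P,V)$; this is the place where local finiteness of $V$ and finite generation of $P$ together are essential, and is the step that would require the most care to write out rigorously.
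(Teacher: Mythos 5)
Your outline is correct, but it is worth knowing that the paper does not actually prove Proposition~\ref{usdeks} at all: it simply remarks that the six statements are special cases of general facts about locally unital, locally finite dimensional algebras and cites \cite[Section~II.2]{Re} and \cite[Section~2.2]{BRUNDAN}. What you have written is, in effect, a sketch of the proofs carried out in those references, specialized to $B$; the only input specific to this paper is exactly the one you isolate, namely Theorem~\ref{basisofb}, which gives $\dim 1_{\ob n}B1_{\ob m}<\infty$ for all $m,n$. Your arguments for (1), (2), (3) and (4) are the standard ones and are complete as sketched: (1) and (3) from decomposing generators into finitely many weight components, (2) by the eigenvalue form of Schur's lemma over the algebraically closed field $K$, and (4) by Fitting/Krull--Schmidt--Azumaya once $\End_B(V)$ is known to be finite dimensional via (1) and (3). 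So the ``difference'' from the paper is only that you supply the general theory explicitly rather than quoting it; that buys self-containedness at the cost of length, which is presumably why the authors chose the citation.

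Two places in your sketch are genuinely compressed and would need to be written out (they are standard, and you flag the second yourself, so I would not call them gaps in the approach). For (5), decomposing each $B1_{\ob m}$ via primitive idempotents of the finite dimensional unital algebra $1_{\ob m}B1_{\ob m}$ gives projective covers of simples, but to cover an arbitrary finitely generated $V$ you still need that $V/\operatorname{rad}(V)$ is a finite direct sum of simples (finite generation plus local finiteness), after which $P(V)$ is the sum of the covers of those simples and the surjectivity/essentiality check is routine. For (6), exactness of $\Hom_B(P,-)$ together with $\Hom_B(P,L)\cong K$ and $\Hom_B(P,L')=0$ for $L'\not\cong L$ gives the inequality $[V:L]\le\dim\Hom_B(P,V)$, which is finite by (3) since $P$ is finitely generated; for the reverse inequality you must construct, from $n$ linearly independent maps $P\to V$, a filtration of $V$ with at least $n$ sections isomorphic to $L$, using that every nonzero quotient of $P$ has $L$ as a quotient (its unique simple head). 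Your appeal to ``comparing contributions to weight spaces'' is vaguer than this, but the filtration-building argument is exactly what the cited references do, so the proposal is sound once these details are added.
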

By Proposition~\ref{usdeks}(1), $B$-pmod is also a  subcategory of $B$-lfdmod.
Motivated by \cite{Re}, we consider locally unital subalgebras of $B$ as follows:
\begin{enumerate}\item
 $\mathbb K:=\bigoplus_{m\in\mathbb N}  K 1_{\ob m}$,

    \item $B^0_m$, $m\in\mathbb N$: the $K$-span of all Brauer diagrams  on which there are  $m$ strings and there are  neither  cups nor caps,
\item $B^0:=\bigoplus_{m\in \mathbb N}B^0_m$, the $K$-span of all Brauer diagrams on which there are neither  cups nor caps,
 \item  $B^+$ : the $K$-span of all Brauer diagrams on which there are neither cups and nor crossings among vertical strands,   \item $B^-$:  the $K$-span of all Brauer  diagrams on which there are  neither  caps  nor crossings among vertical strands,
\item $B^\sharp$: the $K$-span of all Brauer diagrams on which there are no cups,
\item $B^\flat$: the $K$-span of all Brauer diagrams on which there are  no caps.\end{enumerate}

\begin{Lemma}\label{useifcats}For any $m,n\in\mathbb N$,
\begin{enumerate}
\item [(1)] $1_{\ob m }B^-1_{\ob n}\neq 0\text{ only if }$ $m=n+2k$, for some $k\in\mathbb N$,
\item [(2)] $1_{\ob m }B^+1_{\ob n}\neq 0\text{ only if }$ $n=m+2k$, for some $k\in\mathbb N$.
\end{enumerate}
\end{Lemma}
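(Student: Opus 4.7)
The plan is to argue purely by counting endpoints using the basis result of Theorem~\ref{basisofb}. Recall that $1_{\ob m} B 1_{\ob n} = \Hom_{\B(\delta_0)}(\ob n, \ob m)$, which by Theorem~\ref{basisofb} has a $K$-basis indexed by equivalence classes of $(n,m)$-Brauer diagrams, i.e.\ diagrams with $n$ endpoints along the bottom row and $m$ endpoints along the top row. Recalling the convention stated earlier that a \emph{cap} is a horizontal strand on the bottom row and a \emph{cup} is a horizontal strand on the top row, it suffices to show that no $(n,m)$-Brauer diagram of the required type exists unless the parity/inequality conditions hold.

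For part (1), a basis diagram in $1_{\ob m} B^{-} 1_{\ob n}$ is an $(n,m)$-Brauer diagram with no caps and no crossings among vertical strands. The absence of caps forces every one of the $n$ bottom endpoints to be the foot of a vertical strand reaching the top. Those $n$ vertical strands occupy $n$ of the $m$ top endpoints, and since every endpoint of the diagram must be paired, the remaining $m-n$ top endpoints must be matched among themselves by cups. This requires $m - n \geq 0$ and $m - n$ to be even, i.e.\ $m = n + 2k$ for some $k\in\mathbb N$, as claimed. (The crossings condition plays no role here; it simply restricts which diagrams in this parity class lie in $B^-$.)

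For part (2), the argument is entirely symmetric: a basis diagram in $1_{\ob m} B^{+} 1_{\ob n}$ is an $(n,m)$-Brauer diagram with no cups and no crossings among vertical strands. The absence of cups forces all $m$ top endpoints to be the head of a vertical strand descending to the bottom, so they occupy $m$ of the $n$ bottom endpoints, and the remaining $n-m$ bottom endpoints must be paired by caps. Hence $n - m \geq 0$ is even, i.e.\ $n = m + 2k$ for some $k\in\mathbb N$.

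There is no real obstacle: the only thing to be careful about is the direction convention, namely that $1_{\ob m} B 1_{\ob n}$ corresponds to morphisms from $\ob n$ (bottom) to $\ob m$ (top), so that caps (bottom horizontal strands) control the parity of $n$ relative to the vertical count, while cups (top horizontal strands) control that of $m$. Once this is made explicit, both statements reduce to elementary endpoint bookkeeping applied to the basis supplied by Theorem~\ref{basisofb}.
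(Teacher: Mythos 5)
Your proof is correct and follows essentially the same route as the paper: the paper also deduces the statement from the basis of Theorem~\ref{basisofb} together with the parity of endpoints and the definitions of $B^{\pm}$ (no caps, resp.\ no cups), with your version simply spelling out the endpoint bookkeeping in more detail. Your remark that the no-crossing condition is irrelevant and your care with the convention $1_{\ob m}B1_{\ob n}=\Hom_{\B(\delta_0)}(\ob n,\ob m)$ are both accurate.
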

\begin{proof} It follows from  the definition of  $(m,s)$-Brauer diagrams that $1_{\ob m} B1_{\ob s}\neq 0$ only if $m+s$ is even. Now, (1)-(2) follow immediately from  Theorem~\ref{basisofb} and definitions of $B^+$ and $B^-$.\end{proof}
Thanks to Theorem~\ref{basisofb}, there are $\mathbb N$-gradings on both $B^\sharp$  and   $B^\flat$ such that
  $B^\sharp=\bigoplus _{d\in\mathbb N}B^\sharp[d]$ and $B^\flat=\bigoplus _{d\in\mathbb N}B^\flat[-d]$,
  where \begin{itemize}\item  $B^\sharp[d]$ is the subspace of $B^\sharp$ spanned by all   Brauer diagrams on which there are $d$  caps,
  \item  $B^\flat[-d]$ is the subspace of $B^\flat$ spanned by all  Brauer diagrams on which there are $d$  cups. \end{itemize}
  Moreover, it induces  a  grading on $B^+$ and $B^-$   and  $B^\flat[0]=B^\sharp[0]=B^0$. The following   triangular decomposition of $B$ was first given  in \cite[\S~8.3]{CZ}.
\begin{Lemma}\label{triangularde}
As $K$-spaces, we have \begin{enumerate}\item [(1)] $B\cong B^-\otimes _{\mathbb K}B^0\otimes _{\mathbb K}B^+$, \item  [(2)] $B^\flat\cong  B^-\otimes _{\mathbb K}B^0 $, \item [(3)] $ B^\sharp\cong B^0\otimes _{\mathbb K}B^+$.\end{enumerate}
\end{Lemma}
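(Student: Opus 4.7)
The plan is to construct explicit multiplication maps
\begin{equation*}
\mu \colon B^-\otimes_{\mathbb K}B^0\otimes_{\mathbb K}B^+ \longrightarrow B, \qquad x\otimes y\otimes z\longmapsto x y z,
\end{equation*}
together with their analogues for parts (2) and (3), and to prove each is a $K$-linear isomorphism by matching bases via a canonical decomposition of Brauer diagrams. Well-definedness over $\mathbb K = \bigoplus_m K 1_{\ob m}$ is automatic since $\mathbb K$ is spanned by the idempotents $1_{\ob m}$. For surjectivity in (1), given $d \in \mathbb B_{m,s}$, let $v$ be the number of vertical strands of $d$, with bottom endpoints $i_1 < \cdots < i_v$ in $\{1,\ldots,m\}$ and top endpoints $j_1 < \cdots < j_v$ in $\{m+1,\ldots,m+s\}$. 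Let $\sigma \in \mathfrak S_v$ be the permutation for which the vertical strand starting at $i_k$ terminates at $j_{\sigma(k)}$. I would build $d_+ \colon \ob m \to \ob v$ by keeping all caps of $d$ on the bottom with their original pairing and adding $v$ non-crossing vertical strands from each $i_k$ to the $k$-th point on top; build $d_- \colon \ob v \to \ob s$ dually using the cups of $d$; and let $d_0 \colon \ob v \to \ob v$ be the permutation diagram corresponding to $\sigma$. By construction $d_+ \in B^+$, $d_- \in B^-$, $d_0 \in B^0$, and the composite $d_- \circ d_0 \circ d_+$ realizes the same pair-partition of $\{1,\ldots,m+s\}$ as $d$, so $d = d_- \circ d_0 \circ d_+$ in $B$ by Theorem~\ref{basisofb}(1).

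For injectivity, I would use Theorem~\ref{basisofb}(2) to exhibit bases of the three factors: each of $1_{\ob s}B^-1_{\ob v}$ and $1_{\ob v}B^+1_{\ob m}$ is spanned by the inequivalent Brauer diagrams of its defining type, while $1_{\ob v}B^01_{\ob v}$ is spanned by the $v!$ permutation diagrams in $\mathfrak S_v$. Hence $1_{\ob s}(B^-\otimes_{\mathbb K}B^0\otimes_{\mathbb K}B^+)1_{\ob m}$ has a basis indexed by triples $(d_-, d_0, d_+)$ with matching intermediate weight $v$. Since $v$, the cap-pairing of $d_+$, the cup-pairing of $d_-$, and the permutation $d_0$ are all data intrinsic to the pair-partition of $d$, the canonical decomposition above is the unique such triple producing $d$. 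Thus $\mu$ carries basis to basis bijectively and is an isomorphism, proving (1).

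Parts (2) and (3) follow the same template with simpler decompositions. For $d \in B^\flat$ (no caps) all $m$ bottom endpoints are through, and $d$ factors uniquely as $d_- \circ d_0$ where $d_0 \in B^0$ on $\ob m$ is the permutation recording how the bottom points match the through-endpoints on top, and $d_- \in B^-$ is the straightened diagram carrying the cups on top with no vertical crossings; dually, each $d \in B^\sharp$ factors uniquely as $d_0 \circ d_+$. The one point requiring care, and which underlies the entire argument, is that the \emph{no crossings among vertical strands} restriction in $B^\pm$ pins down a unique equivalence-class representative once the endpoint-pairing data are fixed: the through-endpoints, listed in increasing order at the bottom, must connect to positions $1, 2, \ldots, v$ in order at the top. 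Once this bookkeeping is in place, the result follows immediately from Theorem~\ref{basisofb}, and no deeper obstacle arises.
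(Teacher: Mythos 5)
Your proposal is correct and follows essentially the same route as the paper: the paper also deduces the isomorphisms by using Theorem~\ref{basisofb} to identify bases of $B^-$, $B^0$, $B^+$ (diagrams with no caps/cups and no crossings among vertical strands) and observing that the product map matches the basis of the tensor product with the basis of $B$ (resp.\ $B^\flat$, $B^\sharp$). Your explicit factorization $d=d_-\circ d_0\circ d_+$ and the uniqueness argument simply spell out the details the paper leaves implicit.
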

\begin{proof}
By Theorem~\ref{basisofb}, $B^-$ (resp., $B^+$) has a basis given by  the set of all Brauer diagrams on which there are  no caps (resp., cups)  and moreover, there are no crossings among vertical strings.
Similarly, $B^0$ has a basis given by the set of  Brauer diagrams on which there are neither  caps nor cups. Thanks to Theorem~\ref{basisofb} again, the product map gives a $K$-isomorphism between  $B^-\otimes _{\mathbb K}B^0\otimes _{\mathbb K}B^+$ and $B$. One can verify the second and third isomorphisms, similarly.
\end{proof}

Thanks to Theorem~\ref{basisofb} and \eqref{B1},  $B^0_m\cong K\mathfrak S_m$, the group algebra of the symmetric group on $m$
letters. Thus,
\begin{equation}\label{isomosym}
B^0\cong \bigoplus_{m\in\mathbb N}K \mathfrak S_m.
\end{equation}
Since $B^0$ is isomorphic to the  quotient of $B^\sharp$ by the two-sided ideal $\bigoplus_{d\geq1}B^\sharp[d]$, any  $B^0$-module can be considered as a $B^\sharp$-module. It gives  an exact functor $B^0$-mod $ \rightarrow B^\sharp$-mod. Composing this functor with $B\otimes _{B^\sharp}?$ yields the standardization functor
\begin{equation}\Delta: B^0\text{-mod}\rightarrow B\text{-mod}
\end{equation}
in the sense of \cite{Re}. See also \cite{CZ} for similar functors defined for certain truncations of $B$.
Similarly, there is an exact functor $B^0$-mod $ \rightarrow B^\flat$-mod. Composing this functor with
$\bigoplus_{m\in\mathbb N} \Hom_{B^\flat}(B1_{\ob m},?)$ yields  the costandardization functor
\begin{equation}
\nabla: B^0\text{-mod}\rightarrow B\text{-mod}
\end{equation}
in the sense of \cite{Re}, where the action of $a\in 1_{\ob m} B 1_{\ob n}$ on $f\in \Hom_{B^\flat}(B1_{\ob m'},V)$ is zero unless $n=m'$, in which case $a f \in \Hom_{B^\flat}(B1_{\ob m},V)$ such that $(af)(b)=f(ba)$, for all $b\in B 1_{\ob m}$.
\begin{Lemma}\label{ddheuhd}Keep the notations  above.
\begin{enumerate}
\item[(1)] Both  $\Delta$ and $\nabla$ are exact functors.
\item[(2)] Restricting  $\Delta$ and $\nabla$   to $B^0$-{\rm lfdmod} yields  two functors $\Delta: B^0\text{-\rm lfdmod}\rightarrow B\text{-\rm lfdmod}$ and $\nabla: B^0\text{-\rm lfdmod}\rightarrow B\text{-\rm lfdmod}$.
\end{enumerate}
\end{Lemma}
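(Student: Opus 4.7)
The plan is to exploit the triangular decomposition of Lemma~\ref{triangularde} to reduce the exactness claims to flatness or projectivity of $B$ over the subalgebras $B^\sharp$ and $B^\flat$, and then to use the directional restriction in Lemma~\ref{useifcats} to see that each weight space of $\Delta(V)$ and $\nabla(V)$ is finite dimensional.

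For part (1), the inflation functors $B^0\text{-mod}\to B^\sharp\text{-mod}$ along $B^\sharp\twoheadrightarrow B^0$ and $B^0\text{-mod}\to B^\flat\text{-mod}$ along $B^\flat\twoheadrightarrow B^0$ are manifestly exact, so it suffices to analyze the second step in each construction. Combining the three parts of Lemma~\ref{triangularde}, we obtain $B\cong B^-\otimes_{\mathbb K} B^\sharp$ as a right $B^\sharp$-module and $B\cong B^\flat\otimes_{\mathbb K} B^+$ as a left $B^\flat$-module. The first isomorphism shows that $B$ is free, hence flat, as a right $B^\sharp$-module, so $\Delta=B\otimes_{B^\sharp}(?)$ is exact. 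The second isomorphism gives
\[
B\,1_{\ob m}\cong B^\flat\otimes_{\mathbb K}(B^+ 1_{\ob m})=\bigoplus_{n}(B^\flat 1_{\ob n})\otimes_K(1_{\ob n}B^+ 1_{\ob m})
\]
as a left $B^\flat$-module; each summand is a free left $B^\flat$-module, so $B\,1_{\ob m}$ is projective, $\Hom_{B^\flat}(B\,1_{\ob m},?)$ is exact, and taking the direct sum over $m$ shows that $\nabla$ is exact.

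For part (2), given $V\in B^0\text{-lfdmod}$, I would compute the weight spaces of $\Delta(V)$ and $\nabla(V)$ directly from the same two decompositions:
\[
1_{\ob m}\Delta(V)\cong\bigoplus_{n}(1_{\ob m}B^- 1_{\ob n})\otimes_K 1_{\ob n}V,
\]
\[
1_{\ob m}\nabla(V)\cong\bigoplus_{n}\Hom_K(1_{\ob n}B^+ 1_{\ob m},\,1_{\ob n}V).
\]
By Lemma~\ref{useifcats}, both sums involve only those $n$ with $n\le m$ and $m-n\in 2\mathbb N$, which is a finite range. Since each space $1_{\ob m}B^- 1_{\ob n}$ and $1_{\ob n}B^+ 1_{\ob m}$ is finite dimensional by Theorem~\ref{basisofb}, and each $1_{\ob n}V$ is finite dimensional by hypothesis, every weight space of $\Delta(V)$ and $\nabla(V)$ is a finite direct sum of finite dimensional $K$-spaces, hence finite dimensional.

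The only real subtleties are keeping the left versus right module structures straight and noticing that the directional bound on nonzero components given by Lemma~\ref{useifcats} is precisely what forces the weight spaces to be genuinely finite dimensional rather than merely countable. Beyond that, the argument is routine bookkeeping once the triangular decomposition is in hand, so I do not anticipate any serious obstacle.
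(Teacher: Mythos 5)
Your argument is essentially the paper's own proof: both use the triangular decomposition of Lemma~\ref{triangularde} to show $1_{\ob m}B$ (resp.\ $B1_{\ob m}$) is projective as a right $B^\sharp$- (resp.\ left $B^\flat$-) module, and then invoke Lemma~\ref{useifcats} together with the finite dimensionality from Theorem~\ref{basisofb} to bound each weight space of $\Delta(V)$ and $\nabla(V)$. The only cosmetic point is that over the locally unital algebra $B^\sharp$ the summands $1_{\ob n}B^\sharp$ are projective rather than ``free,'' but since projective (indeed, $1_{\ob n}B^\sharp\otimes_{B^\sharp}M\cong 1_{\ob n}M$) already gives flatness, the proof is correct as written.
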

\begin{proof}
Thanks to Lemma~\ref{triangularde}, we have
\begin{equation}\label{siobia}
\begin{aligned}
1_{\ob m}B&\cong 1_{\ob m} B^-\otimes_{\mathbb K}B^\sharp\cong \bigoplus_{n\in \mathbb N}(1_{\ob n}B^\sharp)^{\oplus \dim (1_{\ob m}B^-1_{\ob n})}~ \text{ as right $B^\sharp$-modules,} \\
B1_{\ob m}&\cong B^\flat \otimes _{\mathbb K} B^+1_{\ob m}\cong \bigoplus_{n\in \mathbb N}(B^\flat 1_{\ob n})^{\oplus \dim(1_{\ob n}B^+1_{\ob m})}~ \text{ as left $B^\flat$-modules}.
\end{aligned}
\end{equation}
By Lemma~\ref{useifcats}, there are only finitely many $n$ such that   $\dim 1_{\ob m}B^-1_{\ob n}\neq 0$ and  $\dim 1_{\ob n}B^+1_{\ob m}\neq 0$ for any fixed $m$.
So, $1_{\ob m}B$ (resp., $B 1_{\ob m}$) is finitely generated and projective as right $B^\sharp$-modules (resp., as left $B^\flat$-modules). Therefore, (1) is proven.

Suppose that $V\in  B^0\text{-lfdmod}$. For any $m\in \mathbb N$, we have
\begin{equation}\label{kswskwskwiskwks}
\begin{aligned}1_{\ob m}\Delta(V)&=1_{\ob m}B\otimes_{B^\sharp}V=1_{\ob m}B^-\otimes_{B^\sharp} V =\oplus _{n\in \mathbb N}1_{\ob m}B^-1_{\ob n}\otimes_{B^\sharp} 1_{\ob n}V,\\
1_{\ob m}\nabla(V)&= \Hom_{B^\flat}(B1_{\ob m},V)=\oplus_{n\in\mathbb N}\Hom_{B^\flat}( B^\flat1_{\ob n},V)^{\dim 1_{\ob n}B^+1_{\ob m}} =\oplus_{n\in\mathbb N}(1_{\ob n}V)^{\dim 1_{\ob n}B^+1_{\ob m}}.
\end{aligned} \end{equation}
Thanks to Lemma~\ref{useifcats}, we see that $\dim 1_{\ob m}\Delta(V)<\infty$ and  $\dim 1_{\ob m}\nabla(V)<\infty$. Hence $\Delta(V)\in B\text{-lfdmod}$
  and $\nabla(V)\in B\text{-lfdmod}$.
\end{proof}
Now, we briefly  recall the representation theory of the symmetric group $\mathfrak S_m$.
Let $K\mathfrak S_m$-mod be the category of finite dimensional $\mathfrak S_m$-modules.
For $p>0$, a partition $\lambda$ is called $p$-regular if $\sharp\{j\mid \lambda_j=k\}<p$ for any $k>0$.
Let  $\Lambda_p(m)$ be   the set of all $p$-regular partitions  of  $m$. If $p=0$, then $\Lambda_p(m)$ is $\Lambda(m)$,
 the set of all  partitions of $m$. We always identify  a partition with its Young diagram.
The content of any box $x$ in $\lambda$ is $c(x)\equiv j-i (\text{mod } p)$ if $x$ is in the $i$th row and $j$th column of $\lambda$.
 Define
  $\Lambda=\bigcup_{m\in\mathbb N}\Lambda(m)$ and  $\Lambda_p=\bigcup_{m\in\mathbb N}\Lambda_p(m)$. When  $p=0$, $\Lambda_0=\Lambda$.

For any  $\lambda\in \Lambda(m)$, there is an $S(\lambda)$, called the \textsf{classical Specht module}\cite{GJ}.
If $\lambda\in \Lambda_p(m)$, then the head $D(\lambda)$  of  $S(\lambda)$ is  simple.
It is well-known that $\{D(\lambda)\mid \lambda\in \Lambda_p(m)\}$ is a complete set of pair-wise   non-isomorphic simple  $K \mathfrak S_m$-modules. For any $\lambda\in\Lambda(m)$, there is a $Y(\lambda)$, called  the \textsf{Young module}. If $\lambda\in\Lambda_p(m)$, then
   $  Y(\lambda)  $ is both the   projective cover and the injective hull of $D(\lambda)$. When $p=0$, $K\mathfrak S_m$ is semisimple and  $Y(\lambda)=S(\lambda)=D(\lambda)$ for all $\lambda\in \Lambda(m)$.

Suppose  $V\in B$-mod.
Following \cite{Re}, the  subspace $1_{\ob m}V$  is called the {\em {shortest word space}} $V^s$ of $V$ if $m$ is the minimal integer such that $1_{\ob m}V\neq0$.
\begin{Prop}\label{hhxdexxx}
For any $V\in B$-mod,  $V^s$ is  a $B^\sharp$-module on which  $ \bigoplus _{d\in \mathbb N\setminus 0}B^\sharp[d]$ acts trivially.
\end{Prop}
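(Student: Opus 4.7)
The plan is to exploit the grading $B^\sharp=\bigoplus_{d\in\mathbb N}B^\sharp[d]$ by the number of caps, together with the minimality of $m$ in the definition of the shortest word space. The only substantive observation is a bookkeeping one: since a diagram in $B^\sharp[d]$ has no cups, all of its strands are either caps or vertical, so a diagram in $1_{\ob s}B^\sharp[d]1_{\ob n}$ must satisfy $s=n-2d$. In particular, $1_{\ob s}B^\sharp[d]1_{\ob n}=0$ unless $s=n-2d$.

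First I would verify that $V^s$ is stable under $B^\sharp$. Take $v\in V^s=1_{\ob m}V$ and $b\in B^\sharp$. Using the locally unital structure, $bv=(b1_{\ob m})v$, so without loss of generality we may assume $b\in B^\sharp 1_{\ob m}$. Decomposing $b$ according to the grading and left idempotents, we reduce to the case $b\in 1_{\ob s}B^\sharp[d]1_{\ob m}$, and the bookkeeping observation forces $s=m-2d$. If $d=0$, then $s=m$, so $bv\in 1_{\ob m}V=V^s$. If $d>0$, then $s=m-2d<m$, and by the minimality of $m$ we have $1_{\ob s}V=0$, hence $bv=0$. This simultaneously shows that $V^s$ is a $B^\sharp$-submodule of $V$ and that $\bigoplus_{d\geq 1}B^\sharp[d]$ acts as zero on $V^s$.

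The action of $B^\sharp$ on $V^s$ so defined is automatically associative and unital because it is inherited from the $B$-module structure on $V$ via the inclusion $B^\sharp\hookrightarrow B$; no separate verification is needed. Since $B^\sharp[0]=B^0$, the residual action factors through $B^0$, and in fact the argument makes $V^s$ into a $B^0_m$-module under the identification $B^0_m\cong K\mathfrak S_m$ from \eqref{isomosym}.

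There is essentially no obstacle: the entire content is the compatibility of the cap-grading on $B^\sharp$ with the weight decomposition $V=\bigoplus_n 1_{\ob n}V$, combined with the minimality built into the definition of $V^s$. The only point one must be careful about is tracking which idempotent sits on the left of a homogeneous element of $B^\sharp[d]$, in order to conclude that its image lies in a strictly smaller weight space when $d>0$.
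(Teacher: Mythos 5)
Your proof is correct and follows essentially the same route as the paper: both rest on the observation that $B^\sharp[d]1_{\ob m}V=1_{\ob{m-2d}}B^\sharp[d]1_{\ob m}V\subseteq 1_{\ob{m-2d}}V$, which vanishes for $d>0$ by the minimality of $m$, while the degree-zero part $B^\sharp[0]=B^0$ preserves $1_{\ob m}V$. The extra bookkeeping you spell out (tracking the left idempotent of a homogeneous element) is exactly the implicit content of the paper's one-line computation, so there is nothing further to add.
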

\begin{proof}
Suppose $V^s=1_{\ob m}V$. For any $d>0$,  $B^\sharp[d]V^s=B^\sharp[d]1_{\ob m}V=1_{\ob{m-2d}}B^\sharp[d]1_{\ob m}V\subseteq 1_{\ob{m-2d}}V$.  Since $m$ is the minimal integer such that $1_{\ob m}V\neq0$, we have $B^\sharp[d]V^s=0$ for $d>0$. So $ \bigoplus _{d\in \mathbb N\setminus 0}B^\sharp[d]$ acts trivially on $V^s$.
Obviously, $1_{\ob m}V$ is a $1_{\ob m} B 1_{\ob m}$-module and hence a $K \mathfrak S_m$-module.   So,  $V^s$ is    both a $B^0$-module and a  $B^\sharp$-module.
\end{proof} Motivated by \cite{Re},  we define
  \begin{equation}\label{stahhss}
  \begin{aligned}
  &\Delta(\lambda):= \Delta(Y(\lambda)), ~\nabla(\lambda):= \nabla(Y(\lambda)), ~\lambda\in\Lambda_p,\\
 &{\bar\Delta}(\lambda):= \Delta(D(\lambda)),\  {\bar\nabla}(\lambda):= \nabla(D(\lambda)), \ \lambda\in\Lambda_p,\\
  &\tilde \Delta(\lambda):=\Delta(S(\lambda)),  ~\lambda\in\Lambda.
  \end{aligned}
  \end{equation}
  $\Delta(\lambda)$ (resp., $\nabla(\lambda)$) is called the standard (resp., costandard) module. Similarly,  $\bar \Delta(\lambda)$
(resp., $\bar\nabla(\lambda)$) is called  the proper standard (resp., proper costandard) module. By Lemma~\ref{ddheuhd}(2), all  modules in \eqref{stahhss}
are objects in $B$-lfdmod.
  If $p=0$, then $Y(\lambda)=S(\lambda)=D(\lambda)$ and hence
    $\Delta(\lambda)=\tilde \Delta(\lambda)={\bar\Delta}(\lambda)$ for all $\lambda\in\Lambda$.

\begin{Theorem} \label{hd1} Keep the notations above.
 \begin{itemize}
 \item[(1)] $\bar \Delta(\lambda)$ has a unique maximal proper submodule, for all $\lambda\in\Lambda_p$.
  \item [(2)] $L(\lambda)^s\cong D(\lambda)$ as $B^0$-modules, where $L(\lambda)$ is the simple head of $\bar\Delta(\lambda)$, for all $\lambda\in\Lambda_p$.
\item [(3)]  $\{L(\lambda)\mid \lambda\in\Lambda_p\}$
is a complete set of pair-wise  non-isomorphic simple $B$-modules.\end{itemize}
\end{Theorem}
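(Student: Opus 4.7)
The plan is to leverage the triangular decomposition of Lemma~\ref{triangularde} to control the weight spaces of $\bar\Delta(\lambda)$, identify its shortest word space with $D(\lambda)$, and then deduce all three statements from the simplicity of $D(\lambda)$ as a $K\mathfrak S_m$-module.

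\textbf{Step 1 (Weight spaces of $\bar\Delta(\lambda)$).} For $\lambda\in\Lambda_p(m)$, I will first note that $D(\lambda)$, viewed as a $B^\sharp$-module with $\bigoplus_{d>0}B^\sharp[d]$ acting as zero, is concentrated in weight $\ob m$. Using $B\cong B^-\otimes_{\mathbb K}B^\sharp$ from Lemma~\ref{triangularde}, one obtains
\[
\bar\Delta(\lambda)=B\otimes_{B^\sharp}D(\lambda)\cong B^-\otimes_K D(\lambda),
\qquad
1_{\ob n}\bar\Delta(\lambda)\cong 1_{\ob n}B^-1_{\ob m}\otimes_K D(\lambda).
\]
By Lemma~\ref{useifcats}(1) this vanishes unless $n=m+2k$ for some $k\geq 0$, and for $n=m$ the space $1_{\ob m}B^-1_{\ob m}$ is spanned by the identity diagram, so $1_{\ob m}\bar\Delta(\lambda)\cong D(\lambda)$ as a $B^0_m$-module. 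In particular $1\otimes D(\lambda)$ generates $\bar\Delta(\lambda)$ as a $B$-module.

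\textbf{Step 2 (Proof of (1)).} Let $M$ be any proper $B$-submodule of $\bar\Delta(\lambda)$. Then $1_{\ob m}M$ is a $B^0_m$-submodule of $D(\lambda)$, which is simple, so $1_{\ob m}M$ is either $0$ or all of $D(\lambda)$. In the latter case $M$ contains the generator $1\otimes D(\lambda)$ and hence equals $\bar\Delta(\lambda)$, a contradiction. Therefore every proper submodule vanishes in weight $\ob m$, so the sum of all proper submodules is still proper. This gives the unique maximal proper submodule $\mathrm{rad}\,\bar\Delta(\lambda)$, and the quotient $L(\lambda)$ is simple.

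\textbf{Step 3 (Proof of (2)).} Since $\bar\Delta(\lambda)$ is supported in weights $\geq m$ and $L(\lambda)$ is a quotient, $L(\lambda)$ is also supported in weights $\geq m$, so $L(\lambda)^s=1_{\ob m}L(\lambda)$. Because $\mathrm{rad}\,\bar\Delta(\lambda)$ has zero weight-$\ob m$ component by Step 2, we obtain $L(\lambda)^s=1_{\ob m}\bar\Delta(\lambda)\cong D(\lambda)$ as $B^0$-modules (with components $B^0_n$ for $n\neq m$ acting as zero).

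\textbf{Step 4 (Proof of (3)).} Let $L$ be any simple $B$-module and pick the minimal $m$ with $1_{\ob m}L\neq 0$. By Proposition~\ref{usdeks}(1), $L^s=1_{\ob m}L$ is finite dimensional, and by Proposition~\ref{hhxdexxx} it is a $B^0_m$-module on which $\bigoplus_{d>0}B^\sharp[d]$ acts trivially. Choose a simple $B^0_m$-submodule $D\subseteq L^s$; then $D\cong D(\lambda)$ for some $\lambda\in\Lambda_p(m)$. The inclusion $D\hookrightarrow L^s\subseteq L$ is a map of $B^\sharp$-modules (with the trivial $B^\sharp[d]$-action on $D$), so by adjunction it extends to a $B$-homomorphism $\bar\Delta(\lambda)=B\otimes_{B^\sharp}D\to L$. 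Since elements $b\cdot d$ with $b\in 1_{\ob{m'}}B1_{\ob m}$, $m'<m$, all vanish (because $1_{\ob{m'}}L=0$), the image is contained in the submodule generated by $D$, and by simplicity of $L$ this image equals $L$. Thus $L$ is a simple quotient of $\bar\Delta(\lambda)$ and therefore $L\cong L(\lambda)$. For pairwise non-isomorphism, if $L(\lambda)\cong L(\mu)$ then the isomorphism restricts to an isomorphism of shortest word spaces, forcing $|\lambda|=|\mu|=m$ and $D(\lambda)\cong D(\mu)$ by (2), hence $\lambda=\mu$.

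The main obstacle is Step 2: one must check that $1\otimes D(\lambda)$ genuinely generates $\bar\Delta(\lambda)$ and that the weight-$\ob m$ component of $\bar\Delta(\lambda)$ is no larger than $D(\lambda)$. Both rely on the precise interaction between the triangular decomposition of Lemma~\ref{triangularde} and the vanishing range in Lemma~\ref{useifcats}; once these are in hand, the rest is formal.
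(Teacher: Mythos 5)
Your proposal is correct and follows essentially the same route as the paper: compute the weight spaces of $\bar\Delta(\lambda)$ via the triangular decomposition (Lemmas~\ref{triangularde} and~\ref{useifcats}) to identify $\bar\Delta(\lambda)^s\cong D(\lambda)$, deduce (1)--(2) from the fact that every proper submodule misses the generating weight-$\ob m$ space, and obtain (3) by extending a simple $B^0$-submodule of $L^s$ to a surjection $\bar\Delta(\lambda)\twoheadrightarrow L$ via adjunction (the paper's Frobenius reciprocity, using Proposition~\ref{hhxdexxx}). No gaps; the argument matches the paper's proof in all essential steps.
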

\begin{proof}Suppose $\lambda\in \Lambda_p(m)$.
 By Lemma~\ref{useifcats} and \eqref{kswskwskwiskwks},
$1_{\ob n}\bar \Delta(\lambda)=1_{\ob n}B^-1_{\ob m}\otimes _{B^\sharp}D(\lambda)\neq 0$ only if
$n=m+2d$ for some $d\in\mathbb N$,
 and hence $\bar \Delta(\lambda)^s=1_{\ob m}\otimes _{\mathbb K}D(\lambda)$. This shows that  $\bar \Delta(\lambda)$ is generated by any non-zero element in $1_{\ob m}\bar\Delta(\lambda)$ and any proper submodule of it  lies in  the subspace $\bigoplus_{d>0}1_{\ob {m+2d}}\bar\Delta(\lambda)$, forcing  the direct sum of all proper submodules is proper.  Then  $\text{rad}\bar\Delta(\lambda)$ is the unique maximal submodule of $\bar\Delta(\lambda)$ such that
$
\text{rad}\bar\Delta(\lambda)\subset\bigoplus_{d>0}1_{\ob {m+2d}}\bar\Delta(\lambda)$.
Moreover, since $\bar \Delta(\lambda)\twoheadrightarrow L(\lambda)$, we have  $D(\lambda)\cong 1_m \bar \Delta(\lambda)\twoheadrightarrow 1_m L(\lambda)$. So,  $ L(\lambda)^s\cong D(\lambda)$ as $B^0$-modules. This proves (1) and (2).

Suppose $L$ is a simple $B$-module such that  $ L^s=1_{\ob m}L$. Then $1_{\ob m}L$ is a $B^0$-module on which $B^0$ acts  via  $K\mathfrak S_m$. Let  $L^0$ be a  simple $B^0$-submodule of $1_{\ob m}L$ such that $L^0\cong D(\lambda)$ for some $\lambda\in\Lambda_p(m)$. By Proposition~\ref{hhxdexxx} and  Frobenius reciprocity,
there is   a  surjective  $B$-module homomorphism $\bar\Delta(\lambda)\twoheadrightarrow  L$. Thanks to (1)-(2),    $L\cong L(\lambda)$.
Now, suppose  $L(\lambda)\cong L(\mu)$. Then  $L(\lambda)^s\cong L(\mu)^s$ as  $B^0$-modules and hence   $D(\lambda)\cong D(\mu)$. Thus   $\lambda=\mu$.
This completes the proof of (3).
\end{proof}

If $\lambda=\emptyset$, then $D(\emptyset)=S(\emptyset)=Y(\emptyset)=K$, forcing
$\tilde\Delta(\emptyset)=\bar\Delta (\emptyset)=\Delta(\emptyset)$. We will freely use this fact later on.

\begin{Lemma}\label{omehga0}
If $\delta_0=0$, then $L(\emptyset)\cong K$,  the trivial module of $B$.
\end{Lemma}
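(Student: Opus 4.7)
The plan is to construct an explicit surjection from $\bar\Delta(\emptyset)$ onto the trivial one-dimensional $B$-module $K$ and then invoke the uniqueness of the simple head $L(\emptyset)$ of $\bar\Delta(\emptyset)$ from Theorem~\ref{hd1}(1)--(2) to conclude $L(\emptyset)\cong K$. The remark immediately preceding the lemma, noting that $D(\emptyset)=Y(\emptyset)=K$, identifies $\bar\Delta(\emptyset)$ with $\Delta(\emptyset)$, so only the latter needs to be analyzed.

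Using Lemma~\ref{triangularde}(1) together with~\eqref{kswskwskwiskwks}, I would identify $\Delta(\emptyset)$ as a $K$-vector space with $B^- 1_{\ob 0} \otimes K$; the weight space $1_{\ob m}\Delta(\emptyset)$ has basis the $(m-1)!!$ cup-only diagrams in $1_{\ob m}B^- 1_{\ob 0}$ for even $m$, and $1_{\ob 0}\Delta(\emptyset)\cong K$ is generated by $1_{\ob 0}\otimes 1$. Set $N := \bigoplus_{m>0} 1_{\ob m}\Delta(\emptyset)$. The key claim is that $N$ is a $B$-submodule of $\Delta(\emptyset)$ when $\delta_0 = 0$.

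To verify the claim I would check that for each basis element $c \in 1_{\ob n} B 1_{\ob m}$ with $m > 0$ and each cup-diagram generator $b \otimes 1 \in 1_{\ob m}\Delta(\emptyset)$, the element $c \cdot (b \otimes 1) = cb \otimes 1$ lies in $N$. If $n > 0$ this is automatic; the essential case is $n = 0$, where $c$ has no top endpoints and is therefore a pure cap-diagram from $\ob m$ to $\ob 0$. Since $b$ is a pure cup-diagram from $\ob 0$ to $\ob m$ with $m > 0$, the composition $cb \in 1_{\ob 0} B 1_{\ob 0}$ consists of a disjoint union of $k \geq 1$ closed loops, and hence equals $\delta_0^k \cdot 1_{\ob 0}$ in $\B(\delta_0)$; this vanishes when $\delta_0 = 0$.

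With $N$ shown to be a submodule, the quotient $\Delta(\emptyset)/N$ is concentrated in weight $\ob 0$, where it is one-dimensional and every morphism with nontrivial source or target annihilates it; this is precisely the trivial $B$-module $K$. The proof of Theorem~\ref{hd1}(1) places $\text{rad}\,\bar\Delta(\emptyset) \subseteq \bigoplus_{d>0}1_{\ob{2d}}\bar\Delta(\emptyset) = N$, and since $N$ is itself a proper submodule we obtain $\text{rad}\,\bar\Delta(\emptyset) = N$, whence $L(\emptyset) = \bar\Delta(\emptyset)/N \cong K$. The only substantive point is the loop-count in the cap-cup composition, which is immediate from the planar picture once the triangular description of $\Delta(\emptyset)$ is in hand.
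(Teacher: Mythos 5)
Your proposal is correct and follows essentially the same route as the paper: it forms the same subspace $N=\bigoplus_{m>0}1_{\ob m}\bar\Delta(\emptyset)$, checks it is a $B$-submodule by observing that the only nontrivial case is composition into $1_{\ob 0}$, where the resulting closed loops contribute powers of $\delta_0=0$, and then identifies $N=\mathrm{rad}\,\bar\Delta(\emptyset)$ to conclude $L(\emptyset)\cong K$. The extra diagram-level bookkeeping you include is just a more explicit version of the paper's remark that every bubble reduces to zero in $\B(0)$.
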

\begin{proof}
Let $N=\bigoplus_{d>0}1_{\ob {2d}}\bar\Delta(\emptyset)= \bigoplus _{d>0}1_{\ob {2d}}B^-1_{\ob 0}\otimes _{\mathbb K}K$.
Note that $1_{\ob m}B1_{\ob n}\neq 0$ only if $m+n$ is even.
So, $1_{\ob m}B1_{\ob n}N\neq 0$ only if $m=2k$ and $n=2d$ for some $d\in \mathbb N\setminus \{0\}$ and $k\in\mathbb N$.
We have $1_{\ob {2k}}B1_{\ob {2d}}N\subset  1_{\ob {2k}}B^-1_{\ob 0}\otimes _{\mathbb K}K$.
If $k>0$, then  $1_{\ob {2k}}B1_{\ob {2d}}N\subset N$.
If $k=0$, then $1_{\ob {0}}B1_{\ob {2d}}N\subset 1_{\ob {0}}B1_{\ob {2d}} B^-1_{\ob 0}\otimes  K=0$, provided that $\delta_0=0$ (i.e. any bubble is reduced to zero in $\B(0)$).
 So,  $N$ is a proper $B$-submodule, forcing  $N=\text{rad} \bar\Delta(\emptyset)$. Therefore,
  $L(\emptyset)=1_{\ob 0} \bar \Delta(\emptyset)=  K$.
\end{proof}

By the definition of $\B(\delta_0)$, there is an obvious isomorphism $\tau: \B(\delta_0)\rightarrow \B(\delta_0)^{\text{op}}$ such that $\tau(\lcap)=\lcup$, $\tau(\lcup)=\lcap$ and it fixes  objects and the generator $\swap$.
Then for any Brauer diagram $b$,     $\tau(b) $ is  obtained by flipping  $b$ vertically.
Moreover,  $\tau$ induces  a $K$-linear  anti-involution on $B$ which fixes $\mathbb K$, preserves $B^0$, and
 swaps $B^+$ with $B^-$. For any $V\in  B$-lfdmod, let
 \begin{equation}
 V^*=\bigoplus_{m\in\mathbb N}\Hom_{K}(1_{\ob m}V,K).
 \end{equation}
 Then  $V^*$ is a $B$-module such that for any $a\in  B$, and $f\in V^*$,
$(af)(v)=f(\tau(a)v)$.
This induces an exact contravariant duality functor $*$  on $B$-lfdmod.
Similarly, we have the contravariant duality functor on $B^0$-lfdmod.  Thanks to  \eqref{isomosym}, this is just the usual duality functor  for symmetric groups.
The counterpart of the Lemma~\ref{key1}  and its proof for oriented skein category can be found in \cite[Lemma~5.5]{Br}.
\begin{Lemma}\label{key1}
 $*\circ \Delta\cong \nabla\circ *$ on the category of finite dimensional $B^0$-modules.
\end{Lemma}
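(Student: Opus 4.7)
The plan is to construct an explicit natural $B$-module isomorphism $\varphi_V\colon \nabla(V^*) \xrightarrow{\sim} (\Delta V)^*$ using the anti-involution $\tau$, which fixes $\mathbb K$ and $B^0$ and swaps $B^\sharp$ with $B^\flat$. By the definitions of $\Delta$ and $\nabla$, the $\ob m$-weight space of $(\Delta V)^*$ is $\Hom_K(1_{\ob m} B \otimes_{B^\sharp} V, K)$, while the $\ob m$-weight space of $\nabla(V^*)$ is $\Hom_{B^\flat}(B 1_{\ob m}, V^*)$. For $g$ in the latter, I would define $\varphi_V(g)(x \otimes v) := g(\tau(x))(v)$ whenever $x \in 1_{\ob m} B 1_{\ob n}$ and $v \in 1_{\ob n} V$. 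Note that $g$ is automatically weight-preserving because $1_{\ob n} \in B^\flat$, so $g(\tau(x)) \in 1_{\ob n} V^* = \Hom_K(1_{\ob n} V, K)$ is indeed a linear form on $1_{\ob n} V$.

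The first task is to verify that $\varphi_V(g)$ descends from $1_{\ob m} B \times V$ to the tensor product over $B^\sharp$, i.e.\ that $\varphi_V(g)(xb \otimes v) = \varphi_V(g)(x \otimes bv)$ for $b \in B^\sharp$. This splits by the grading on $B^\sharp$: for $b \in B^0$ the identity $\tau(xb) = \tau(b)\tau(x)$, the $B^\flat$-linearity of $g$, and the definition $(af)(w) = f(\tau(a)w)$ of the $B$-action on $V^*$ give the result; for $b \in B^\sharp[d]$ with $d>0$, both sides vanish because $V$ is inflated from $B^0$-mod (so $bv = 0$) and dually $V^*$ is inflated along $B^\flat \twoheadrightarrow B^0$ (so $\tau(b)\cdot g(\tau(x)) = 0$). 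Next I would construct the inverse by $\psi_V(h)(y)(v) := h(\tau(y) \otimes v)$ and verify $B^\flat$-linearity by the analogous argument; that $\varphi_V$ and $\psi_V$ are mutually inverse and that $\varphi_V$ is natural in $V$ are immediate from the formulas. Finally, $B$-equivariance of $\varphi_V$ follows from the chain
\[
\varphi_V(a\cdot g)(x \otimes v) = g(\tau(x)a)(v) = g(\tau(\tau(a)x))(v) = \varphi_V(g)(\tau(a)x \otimes v) = (a \cdot \varphi_V(g))(x \otimes v),
\]
which uses $\tau^2 = \mathrm{id}$, $\tau(yz) = \tau(z)\tau(y)$, and the definitions of the $B$-actions on $\nabla(V^*)$ and $(\Delta V)^*$.

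The main obstacle is not in any single calculation but in the bookkeeping of how the triangular decomposition of $B$ (Lemma~\ref{triangularde}), the anti-involution $\tau$, and the weight decompositions of $V$ and $V^*$ interact. The essential structural inputs are that $\tau(B^\sharp) = B^\flat$, that $\tau$ preserves $B^0$ and restricts to the standard anti-involution on each $K\mathfrak S_m$, and that both $V$ and $V^*$ are obtained by inflation from $B^0$-modules along $B^\sharp \twoheadrightarrow B^0$ and $B^\flat \twoheadrightarrow B^0$ respectively; once these are in place, all verifications above are formal.
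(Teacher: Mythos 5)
Your proof is correct and follows essentially the same route as the paper: the maps $\varphi_V$ and $\psi_V$ you write down are exactly the paper's $\tilde\beta$ and $\bar\alpha$ (just constructed in the opposite order), and your extra verifications of descent over $B^\sharp$, $B^\flat$-linearity, and $B$-equivariance are the details the paper leaves to the reader.
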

\begin{proof} For any finite dimensional $B^0$-module $V$, there is  a $B$-homomorphism
$\phi: \Delta(V)^*\rightarrow \nabla(V^*)$, sending  $ \alpha$ to $ \bar \alpha  $
where $\bar\alpha(f)(v)=\alpha(\tau(f)\otimes v)$ for all $v\in V$ and  $f\in B1_{\ob m}$.
The  inverse of $\phi$ is the homomorphism
$ \nabla(V^*)\rightarrow\Delta(V)^*$, sending $ \beta$ to $ \tilde \beta$,
where $\tilde \beta(f\otimes v)=\beta(\tau(f))(v)$.
\end{proof}

\begin{Lemma}\label{isodual} Suppose $\lambda,\mu\in\Lambda_p$. Then   \begin{itemize} \item [(1)]
$\Delta(\lambda)^*\cong \nabla(\lambda)$, $\bar\Delta(\lambda)^*\cong \bar \nabla(\lambda)$ and $L(\lambda)\cong L(\lambda)^*$, \item [(2)] $[\bar\Delta(\lambda):L(\mu)]=[\bar\nabla(\lambda):L(\mu)]$.\end{itemize}
\end{Lemma}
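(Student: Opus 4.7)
The plan is to reduce everything to Lemma~\ref{key1} combined with the well-known self-duality properties of the relevant symmetric group modules.

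For part (1), I first observe that the anti-involution $\tau$ restricts to an anti-involution on $B^0 \cong \bigoplus_m K\mathfrak{S}_m$: on a permutation diagram (an element of $B^0_m$), vertical flipping is precisely inversion of the permutation, which is the standard anti-involution of $K\mathfrak{S}_m$. Under this anti-involution, the simple module $D(\lambda)$ is self-dual (classical), and the Young module $Y(\lambda)$ is self-dual because it is simultaneously the projective cover and injective hull of $D(\lambda)$. Combining these facts with Lemma~\ref{key1} gives
\[
\Delta(\lambda)^* = \Delta(Y(\lambda))^* \cong \nabla(Y(\lambda)^*) \cong \nabla(Y(\lambda)) = \nabla(\lambda),
\]
and identically $\bar{\Delta}(\lambda)^* \cong \bar{\nabla}(\lambda)$.

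To show $L(\lambda)^* \cong L(\lambda)$, I use the shortest word space. Since $*$ is an exact contravariant duality, $L(\lambda)^*$ is simple, so by Theorem~\ref{hd1}(3) it is isomorphic to $L(\mu)$ for some $\mu \in \Lambda_p$, and it suffices to identify $\mu$ via the $B^0$-structure on the shortest word space. If $\lambda \in \Lambda_p(m)$, then $L(\lambda)^s = 1_{\ob{m}}L(\lambda) \cong D(\lambda)$ by Theorem~\ref{hd1}(2). The weight space decomposition $V^* = \bigoplus_m \Hom_K(1_{\ob m}V, K)$ shows that $(L(\lambda)^*)^s = 1_{\ob m}(L(\lambda)^*) = \Hom_K(D(\lambda),K)$ with the $B^0$-action twisted by $\tau$; by the anti-involution compatibility above, this is just the classical dual $D(\lambda)^* \cong D(\lambda)$. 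Hence $\mu = \lambda$.

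For part (2), since $*$ is an exact contravariant functor and $L(\mu)^* \cong L(\mu)$ by (1), any finite filtration $0 = V_0 \subset V_1 \subset \cdots \subset V_n$ of $\bar\Delta(\lambda)$ dualizes to a filtration of $\bar\Delta(\lambda)^* \cong \bar\nabla(\lambda)$ with the same number of subquotients isomorphic to $L(\mu)$, and conversely. Taking the supremum in the definition of $[\,\cdot\,:L(\mu)]$ yields the claimed equality.

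The only real subtlety will be the verification that $\tau|_{B^0}$ agrees with the classical anti-involution on $K\mathfrak{S}_m$ up to an isomorphism preserving $D(\lambda)$ and $Y(\lambda)$; this is essentially the diagrammatic fact that flipping a crossingless-cap-free diagram on $m$ through $m$ strands interchanges bottom and top endpoints, sending a permutation to its inverse. Once this is in place, both parts follow by short formal arguments, and no further input is needed beyond Lemma~\ref{key1}, Theorem~\ref{hd1}, and the classical representation theory of $\mathfrak{S}_m$.
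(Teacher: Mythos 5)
Your proof is correct and follows essentially the same route as the paper: self-duality of $D(\lambda)$ and $Y(\lambda)$ as $B^0$-modules combined with Lemma~\ref{key1} for the first two isomorphisms in (1), the shortest word space comparison $(L(\lambda)^*)^s\cong D(\lambda)^*\cong D(\lambda)$ for the last one, and exactness of $*$ for (2). The additional care you take in identifying $\tau|_{B^0}$ with the classical anti-involution and in deducing self-duality of $Y(\lambda)$ from its being both projective cover and injective hull of $D(\lambda)$ merely makes explicit what the paper handles by citing \cite[Theorem~11.2.1]{Kle}.
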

\begin{proof}It is well-known that  $D(\lambda)^*\cong D(\lambda)$ and $Y(\lambda)^*\cong Y(\lambda)$ as $B^0$-modules (c.f.\cite[Theorem~11.2.1]{Kle}). Thanks to Lemma~\ref{key1}, we have isomorphisms in (1) except the last one.
Since $ L(\lambda)^s=D(\lambda)\cong D(\lambda)^*=  (L(\lambda)^*)^s$, we have the last isomorphism. Finally, since $\ast$ is an exact functor,  (2) follows from (1). \end{proof}

 A $B$-module $V$  has a finite $\Delta$-flag if it has a finite filtration such that its  sections are  isomorphic to $\Delta(\lambda)$ for various  $\lambda\in \Lambda_p$. Let $B$-mod$^{\Delta}$ be full subcategory of $B$-mod
consisting of all modules with a finite $\Delta$-flag. By Lemma~\ref{ddheuhd}(2),  $\Delta(\lambda)\in B\text{-lfdmod}$ for any $\lambda\in\Lambda_p$. So, $B$-mod$^{\Delta}$ is a subcategory
of $ B\text{-lfdmod}$.
The following result  can
be proved by arguments similar to those  in \cite[Lemma~5.6]{Br}.

\begin{Lemma}\label{deltp}$\{\Delta(\lambda)\mid \lambda\in\Lambda_p\}$  is a complete set of all non-isomorphic  indecomposable projective  $B^\flat$-modules.
\end{Lemma}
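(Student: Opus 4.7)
The plan is to identify $\Delta(\lambda)$, viewed as a $B^\flat$-module by restriction, with $B^\flat e_\lambda$ for a primitive idempotent $e_\lambda\in K\mathfrak S_m$ (where $m=|\lambda|$), and then to exhaust the indecomposable projective $B^\flat$-modules via the left $B^\flat$-module decomposition $B^\flat=\bigoplus_{m\in\mathbb N}B^\flat 1_{\ob m}$.

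First I would unwind the definition $\Delta(\lambda)=B\otimes_{B^\sharp}Y(\lambda)$. By Proposition~\ref{hhxdexxx} the ideal $\bigoplus_{d>0}B^\sharp[d]$ annihilates $Y(\lambda)$, so the action factors through $B^0=B^\sharp/\bigoplus_{d>0}B^\sharp[d]$ and $\Delta(\lambda)\cong(B\otimes_{B^\sharp}B^0)\otimes_{B^0}Y(\lambda)$. Using Lemma~\ref{triangularde}(1) to identify $B\cong B^-\otimes_{\mathbb K}B^\sharp$ as a right $B^\sharp$-module gives $B\otimes_{B^\sharp}B^0\cong B^-\otimes_{\mathbb K}B^0\cong B^\flat$ via the product map (using Lemma~\ref{triangularde}(2)), and a routine check (essentially associativity of multiplication in $B$) shows that this is also a left $B^\flat$-module isomorphism, so
\begin{equation*}
\Delta(\lambda)\cong B^\flat\otimes_{B^0}Y(\lambda)
\end{equation*}
as left $B^\flat$-modules. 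Writing $Y(\lambda)\cong K\mathfrak S_m e_\lambda$ for a primitive idempotent $e_\lambda$ inside $K\mathfrak S_m=1_{\ob m}B^0 1_{\ob m}$, this becomes $\Delta(\lambda)\cong B^\flat e_\lambda$, which is a direct summand of $B^\flat 1_{\ob m}$ and hence a projective $B^\flat$-module.

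To conclude, I would compute $1_{\ob m}B^\flat 1_{\ob m}$ diagrammatically using Theorem~\ref{basisofb}. Any diagram in it has $m$ endpoints on top and bottom and no caps, so every bottom endpoint is the foot of a vertical strand; matching endpoint counts on top then forbids cups as well, giving $1_{\ob m}B^\flat 1_{\ob m}=1_{\ob m}B^0 1_{\ob m}\cong K\mathfrak S_m$. Consequently $\End_{B^\flat}(\Delta(\lambda))\cong e_\lambda K\mathfrak S_m e_\lambda$ is local, so $\Delta(\lambda)$ is indecomposable; moreover $\End_{B^\flat}(B^\flat 1_{\ob m})\cong K\mathfrak S_m$, so the indecomposable summands of $B^\flat 1_{\ob m}$ correspond bijectively to conjugacy classes of primitive idempotents in $K\mathfrak S_m$, i.e.\ to $\Lambda_p(m)$ via $e_\lambda\mapsto B^\flat e_\lambda\cong\Delta(\lambda)$. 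Since $B^\flat=\bigoplus_{m\in\mathbb N}B^\flat 1_{\ob m}$ as left $B^\flat$-modules, Krull--Schmidt now yields the desired classification. The most delicate point in the plan is verifying that the identification $B\otimes_{B^\sharp}B^0\cong B^\flat$ genuinely respects the left $B^\flat$-action coming from multiplication in $B$, since the product map mixes different tensor factors in the triangular decomposition; this is the step that needs to be written out carefully.
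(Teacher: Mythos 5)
Your proposal is correct and follows essentially the same route as the paper: identify $\Delta(\lambda)\cong B^\flat\otimes_{B^0}Y(\lambda)$ via the triangular decomposition, inherit projectivity from $Y(\lambda)$, prove indecomposability by an endomorphism computation concentrated in the weight space $1_{\ob m}$, and exhaust all indecomposable projectives. The only (cosmetic) difference is in the indecomposability step: the paper computes $\End_{B^\flat}(B^\flat\otimes_{B^0}Y(\lambda))\cong\End_{B^0}(Y(\lambda))\cong K$ by adjunction together with Lemma~\ref{useifcats}, whereas you obtain the local ring $e_\lambda K\mathfrak S_m e_\lambda$ from the diagrammatic identity $1_{\ob m}B^\flat 1_{\ob m}\cong K\mathfrak S_m$, which is the same calculation phrased with idempotents.
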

\begin{proof}Thanks to Lemma~\ref{triangularde},
we have  $ \Delta(\lambda)\cong B^\flat\otimes _{B^0} Y(\lambda)$ as $B^\flat$-modules. Since $\{Y(\lambda)\mid \lambda\in \Lambda_p\}$  is a complete set of  non-isomorphic indecomposable projective
$B^0$-modules, any  $ \Delta(\lambda)$ is projective and each indecomposable projective $B^\flat$-module
is isomorphic to a summand of some $\Delta(\lambda)$. Suppose  $\lambda\in \Lambda_p(m)$. Then
  \begin{equation}
 \begin{aligned}
 \End_{B^\flat}(B^{\flat}\otimes _{B^0}Y(\lambda))&\cong\Hom_{B^0}(Y(\lambda),B^\flat\otimes _{B^0}Y(\lambda) )\\
 &\cong\Hom_{B^0}(1_{\ob m}Y(\lambda),1_{\ob m}B^\flat\otimes _{B^0}Y(\lambda) )\\
 &\overset{Lem.~\ref{useifcats}}\cong \End_{B^0}(Y(\lambda))\cong K.
 \end{aligned}
 \end{equation}
So, $\Delta(\lambda)$ is indecomposable and the result follows.
\end{proof}

\begin{Prop} \label{ext}
Suppose  $V\in B$-{\rm mod}$^{\Delta}$ and $\lambda,\mu\in\Lambda_p$.
\begin{enumerate}
\item[(1)] For any  $d\geq0$,
$\dim\text{Ext}^d_{B}(\Delta(\lambda), \bar\nabla(\mu))=\delta_{d,0}\delta_{\lambda,\mu}$.
\item [(2)]$(V:\Delta(\lambda))=\dim \Hom_B(V, \bar \nabla(\lambda))$,  where $(V:\Delta(\lambda))$ is  the   multiplicity
of $\Delta(\lambda)$ in a $\Delta$-flag of $V$. In particular, it   is  independent of the choice of a $\Delta$-flag of $V$.
\end{enumerate}
\end{Prop}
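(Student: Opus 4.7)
The plan is to prove (1) by a chain of adjunctions converting the Ext computation into one about symmetric group modules, and then to deduce (2) by a standard filtration argument.

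For (1), I first invoke the exact contravariant duality $\ast$ on $B$-lfdmod: since $\ast\circ\ast\cong\mathrm{id}$ and since $\Delta(\lambda)^\ast\cong\nabla(\lambda)$, $\bar\nabla(\mu)^\ast\cong\bar\Delta(\mu)$ by Lemma~\ref{isodual}(1), one obtains
$$\mathrm{Ext}^d_B(\Delta(\lambda),\bar\nabla(\mu))\cong \mathrm{Ext}^d_B(\bar\Delta(\mu),\nabla(\lambda)).$$
Next, $\nabla:B^\flat\text{-mod}\to B\text{-mod}$ is right adjoint to restriction (this is routine from $\nabla(V)=\bigoplus_m\Hom_{B^\flat}(B1_{\ob m},V)$), and since restriction is exact, $\nabla$ preserves injectives, so the adjunction lifts to an Ext isomorphism
$$\mathrm{Ext}^d_B(\bar\Delta(\mu),\nabla(Y(\lambda)))\cong\mathrm{Ext}^d_{B^\flat}(\bar\Delta(\mu),Y(\lambda)).$$
A direct check with Lemma~\ref{triangularde}(2) identifies $\bar\Delta(\mu)\cong B^\flat\otimes_{B^0}D(\mu)$ as $B^\flat$-modules and shows that $B^\flat$ is free as a right $B^0$-module; consequently $B^\flat\otimes_{B^0}-$ is exact and the tensor-hom adjunction gives
$$\mathrm{Ext}^d_{B^\flat}(\bar\Delta(\mu),Y(\lambda))\cong\mathrm{Ext}^d_{B^0}(D(\mu),Y(\lambda)).$$
Writing $\mu\in\Lambda_p(m_\mu)$ and $\lambda\in\Lambda_p(m_\lambda)$, this final group equals $\mathrm{Ext}^d_{K\mathfrak S_{m_\mu}}(D(\mu),1_{\ob{m_\mu}}Y(\lambda))$, which vanishes unless $m_\mu=m_\lambda$; in the equal case $Y(\lambda)$ is injective in $K\mathfrak S_{m_\lambda}$-mod (the group algebra being Frobenius), so the Ext equals $\delta_{d,0}\delta_{\lambda,\mu}K$, completing (1).

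For (2), I induct on the length of a $\Delta$-flag $0=V_0\subset V_1\subset\cdots\subset V_n=V$ with $V_i/V_{i-1}\cong\Delta(\mu_i)$. Applying $\Hom_B(-,\bar\nabla(\lambda))$ to each short exact sequence $0\to V_{i-1}\to V_i\to\Delta(\mu_i)\to 0$ and invoking the vanishing $\mathrm{Ext}^1_B(\Delta(\mu_i),\bar\nabla(\lambda))=0$ from (1), I obtain a short exact sequence on finite-dimensional Hom spaces; summing dimensions yields $\dim\Hom_B(V,\bar\nabla(\lambda))=\sum_i\delta_{\mu_i,\lambda}=(V:\Delta(\lambda))$, and the filtration-independence follows because the left-hand side is intrinsic to $V$.

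The principal technical point will be upgrading each Hom-adjunction in the chain for (1) to an Ext isomorphism; all of the required exactness statements (freeness of $B^\flat$ over $B^0$, exactness of $B^\flat\otimes_{B^0}-$, and preservation of injectives by $\nabla$) reduce cleanly to the triangular decomposition of Lemma~\ref{triangularde} together with the exactness of $\nabla$ furnished by Lemma~\ref{ddheuhd}(1).
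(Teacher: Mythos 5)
Your strategy for (1) --- pushing the Ext computation down to the symmetric groups through adjunctions coming from the triangular decomposition --- is exactly the mechanism the paper intends: its own proof is just a pointer to Reynolds' Lemma~V.2.1 plus the remark that Lemma~\ref{triangularde} supplies the triangular decomposition, and your proof of (2) is the standard argument the paper alludes to, with the finiteness of the Hom spaces covered by Proposition~\ref{usdeks}(3). Your two adjunction steps are set up correctly: coinduction from $B^\flat$ is exact and preserves injectives because each $B1_{\ob m}$ is finitely generated projective as a left $B^\flat$-module (see \eqref{siobia}), induction $B^\flat\otimes_{B^0}-$ is exact and preserves projectives because $B^\flat\cong B^-\otimes_{\mathbb K}B^0$ is projective as a right $B^0$-module, and the identification $\mathrm{Res}_{B^\flat}\bar\Delta(\mu)\cong B^\flat\otimes_{B^0}D(\mu)$ together with the Young-module computation at the end is fine.

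The step you should not take for granted is the very first one, $\mathrm{Ext}^d_B(\Delta(\lambda),\bar\nabla(\mu))\cong\mathrm{Ext}^d_B(\bar\Delta(\mu),\nabla(\lambda))$ via $\ast$. The duality is an anti-equivalence of $B$-lfdmod only, whereas $\mathrm{Ext}^d_B$ is a derived functor in $B$-mod, and $B$-lfdmod does not have enough projectives or injectives (an infinite direct sum of the $B1_{\ob m}$ is no longer locally finite dimensional); so for $d\geq 2$ transporting Ext across $\ast$ requires an extra argument (for instance, resolutions of $\bar\Delta(\mu)$ by finitely generated, hence locally finite dimensional, projectives which one then dualizes), and you supply none. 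The cleanest fix --- and essentially the argument of Reynolds that the paper cites --- is to drop the duality altogether: apply your coinduction adjunction directly to the original pair to get $\mathrm{Ext}^d_B(\Delta(\lambda),\bar\nabla(\mu))\cong\mathrm{Ext}^d_{B^\flat}(\mathrm{Res}_{B^\flat}\Delta(\lambda),D(\mu))$, and then invoke Lemma~\ref{deltp}: $\mathrm{Res}_{B^\flat}\Delta(\lambda)\cong B^\flat\otimes_{B^0}Y(\lambda)$ is a projective $B^\flat$-module, so the higher Ext vanish and the degree-zero term is $\Hom_{B^0}(Y(\lambda),D(\mu))=\delta_{\lambda,\mu}K$. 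This shortcut uses only the projectivity of $Y(\lambda)$ over $B^0$, avoids the duality and the appeal to injectivity of the Young module, and with it your argument coincides with the intended one; part (2) is correct as written.
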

\begin{proof} (1) can be proven by
arguments in the proof of \cite[Lemma~V.2.1]{Re}.  We remark that  Reynolds'  arguments depend on the triangular decomposition of the algebra associated to the oriented Brauer category. In our case, our algebra $B$ associated to $\B(\delta_0)$ has the  triangular decomposition, too (see Lemma~\ref{triangularde}). (2) follows from standard arguments by using (1).
\end{proof}

\begin{Lemma}\label{deltafildire}
A $B$-module $V$ is in $B$-{\rm mod}$^{\Delta}$ if and only if it is finitely generated and projective as a $B^\flat$-module. So, $B$-{\rm mod}$^{\Delta}$ is Karoubian.
\end{Lemma}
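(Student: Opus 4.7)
The plan is to prove each direction of the equivalence separately and then deduce the Karoubian property from the equivalence together with the fact that direct summands of finitely generated projective modules remain finitely generated and projective. For the forward direction, given $V\in B\text{-mod}^\Delta$, I would induct on the length of a $\Delta$-flag. Since Lemma~\ref{deltp} shows that $\Delta(\lambda)$ is finitely generated and projective as a $B^\flat$-module, any short exact sequence $0\to W\to V\to \Delta(\lambda)\to 0$ of $B$-modules splits upon restriction to $B^\flat$, so $V\cong W\oplus \Delta(\lambda)$ as $B^\flat$-modules, and the claim propagates through the flag.

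For the reverse direction, suppose $V$ is finitely generated and projective as a $B^\flat$-module. By Lemma~\ref{deltp} together with Krull--Schmidt (Proposition~\ref{usdeks}(4)), $V\cong \bigoplus_{i=1}^r \Delta(\lambda_i)$ as $B^\flat$-modules, and I would induct on $r$. Set $m=\min_i|\lambda_i|$ and consider the shortest word space $V^s=1_{\ob m}V$; using Lemma~\ref{useifcats} together with the fact that $1_{\ob m}B^-1_{\ob m}=K\cdot 1_{\ob m}$, one checks $V^s\cong \bigoplus_{|\lambda_i|=m}Y(\lambda_i)$ as $B^0$-modules. Proposition~\ref{hhxdexxx} makes $V^s$ into a $B^\sharp$-module on which the positive-degree part acts trivially, so I can form the standardization $\Delta(V^s)=B\otimes_{B^\sharp}V^s$ and the canonical $B$-homomorphism $\phi\colon \Delta(V^s)\to V$ sending $b\otimes v\mapsto bv$.

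The crux of the argument is to show that $\phi$ identifies $\Delta(V^s)$ with the $B^\flat$-summand $\bigoplus_{|\lambda_i|=m}\Delta(\lambda_i)$ of $V$. For this I would use the triangular decomposition to write $\Delta(V^s)\cong B^\flat\otimes_{B^0}V^s\cong \bigoplus_{|\lambda_i|=m}\Delta(\lambda_i)$ as $B^\flat$-modules, and then let $\pi\colon V\twoheadrightarrow \bigoplus_{|\lambda_i|=m}\Delta(\lambda_i)$ be the $B^\flat$-projection onto those summands. The composite $\pi\circ\phi$ restricts to the identity on $V^s$, and since each $\Delta(\lambda_i)$ with $|\lambda_i|=m$ is generated over $B^\flat$ by $1_{\ob m}\Delta(\lambda_i)$, this forces $\pi\circ\phi$ to be the identity. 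Hence $\phi$ is injective with image the $B$-submodule $\bigoplus_{|\lambda_i|=m}\Delta(\lambda_i)$, and the quotient $V/\phi(\Delta(V^s))$ is $B^\flat$-isomorphic to $\bigoplus_{|\lambda_i|>m}\Delta(\lambda_i)$, i.e., finitely generated projective over $B^\flat$ with strictly fewer than $r$ summands. By induction it has a $\Delta$-flag, and stacking yields one for $V$. I expect this splitting step to be the main obstacle, since it requires carefully tracking the weight-space structure and exploiting minimality of $m$.

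Finally, for the Karoubian assertion, if $V=V_1\oplus V_2$ in $B\text{-mod}^\Delta$, then $V_1$ and $V_2$ are $B^\flat$-direct summands of the finitely generated projective $B^\flat$-module $V$ and therefore are themselves finitely generated projective over $B^\flat$; the equivalence just established then places them in $B\text{-mod}^\Delta$, so $B\text{-mod}^\Delta$ is closed under direct summands and hence Karoubian.
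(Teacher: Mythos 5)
Your proof is correct and is essentially the argument the paper itself invokes: its proof of Lemma~\ref{deltafildire} simply says to mimic \cite[Lemma~5.8]{Br} with Lemma~\ref{deltp} replacing its counterpart, and your two inductions (splitting over $B^\flat$ using projectivity of $\Delta(\lambda)$ for the forward direction, and peeling off the shortest word space via the standardization map $\phi\colon\Delta(V^s)\to V$ for the converse) are exactly that argument. The only phrase to tighten is the claim that $\operatorname{im}\phi$ equals the chosen $B^\flat$-summand $\bigoplus_{|\lambda_i|=m}\Delta(\lambda_i)$ (which need not be a $B$-submodule, so the image may only project isomorphically onto it), but since $\pi\circ\phi$ is an isomorphism of $B^\flat$-modules one still gets $V=\operatorname{im}\phi\oplus\ker\pi$ as $B^\flat$-modules, so your identification of the quotient and the induction go through unchanged.
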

\begin{proof}Mimicking  arguments in the proof of \cite[Lemma~5.8]{Br} yields the result, immediately. The only difference is that we have to use    Lemma~\ref{deltp} to replace its counterpart in \cite{Br}.
\end{proof}

The proof of following result follows the arguments in \cite[Proposition~ V.2.4]{Re}.
\begin{Prop}\label{dektss} For any $\lambda\in\Lambda_p(m)$, let $P(\lambda)$ be the projective cover of $L(\lambda)$. Then
 \begin{itemize} \item[(1)] $P(\lambda)\in B$-{\rm mod}$^{\Delta}$ such that $(P(\lambda): \Delta(\mu))\neq 0$ only if  $\mu\in\Lambda_p(m-2d)$, for some    $d\geq 0$,
  \item [(2)] $(P(\lambda):\Delta(\lambda))=1$ and $\Delta(\lambda)$ appears as the top section of $P(\lambda)$,
   \item [(3)] $(P(\lambda): \Delta(\mu))\neq 0$ and $\mu\neq \lambda$  only if  $\mu\in\Lambda_p(m-2d)$ for some    $d> 0$.
  \end{itemize} \end{Prop}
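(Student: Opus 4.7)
The plan is to adapt the argument of \cite[Proposition~V.2.4]{Re}, with the triangular decomposition of $B$ (Lemma~\ref{triangularde}) as the central ingredient. For $\lambda\in\Lambda_p(m)$, I fix a primitive idempotent $e_\lambda\in B^0_m\cong K\mathfrak S_m$ with $K\mathfrak S_m e_\lambda\cong Y(\lambda)$, and regard it as an idempotent of $B$ via $B^0\hookrightarrow B$; then $Q(\lambda):=Be_\lambda$ is a finitely generated projective $B$-module, and it will suffice to equip $Q(\lambda)$ with a $\Delta$-flag whose top section is $\Delta(\lambda)$.

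For the flag, let $J:=\bigoplus_{d\geq 1}B^\sharp[d]$, the two-sided ideal of $B^\sharp$ with $B^\sharp/J\cong B^0$. The $J$-adic filtration
\[
B^\sharp 1_{\ob m}\supseteq J\cdot 1_{\ob m}\supseteq J^2\cdot 1_{\ob m}\supseteq\cdots
\]
is a finite filtration by left $B^\sharp$-submodules whose successive quotients $J^i 1_{\ob m}/J^{i+1}1_{\ob m}$ are concentrated in weight $\ob{m-2i}$ and are thereby left $K\mathfrak S_{m-2i}$-modules. A direct combinatorial analysis of the basis in Theorem~\ref{basisofb} (a basis is indexed by a choice of $i$ caps among the $m$ bottom endpoints, together with a bijection between the remaining $m-2i$ bottom endpoints and the top endpoints) shows that each such quotient is free, hence projective, as a $K\mathfrak S_{m-2i}$-module, so it decomposes into Young modules $Y(\mu)$ with $\mu\in\Lambda_p(m-2i)$. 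Applying the exact functor $\Delta$ (Lemma~\ref{ddheuhd}) and multiplying on the right by $e_\lambda$ yields a finite filtration of $Q(\lambda)$ by $B$-submodules whose sections are direct sums of $\Delta(\mu)$ with $\mu\in\Lambda_p(m-2i)$, $0\leq i\leq\lfloor m/2\rfloor$; the top section ($i=0$) equals $\Delta(K\mathfrak S_m e_\lambda)=\Delta(\lambda)$.

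Each $\Delta(\mu)$ is indecomposable with simple head $L(\mu)$: the indecomposability follows from Lemma~\ref{deltp} by restricting the action from $B^\flat$ to $B$, while the computation $\End_B(\Delta(\mu))\subseteq\End_{B^\flat}(\Delta(\mu))=K$ from the proof of Lemma~\ref{deltp} shows the head is simple, forcing it to equal the head $L(\mu)$ of $\bar\Delta(\mu)$ (Theorem~\ref{hd1}). Hence $L(\lambda)$ appears in the head of $Q(\lambda)$, so $P(\lambda)$ is a direct summand of $Q(\lambda)$; by Lemma~\ref{deltafildire}, $B\text{-mod}^\Delta$ is Karoubian, so $P(\lambda)$ inherits a $\Delta$-flag whose sections lie among those of $Q(\lambda)$, and its top section must be $\Delta(\lambda)$ since $L(\lambda)$ appears in the head of the top section and $\Delta(\lambda)$ is indecomposable. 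This proves~(1) and the top-section claim of~(2). The multiplicity statements in~(2) and~(3) then follow from
\[
(P(\lambda):\Delta(\mu))=\dim\Hom_B(P(\lambda),\bar\nabla(\mu))=[\bar\nabla(\mu):L(\lambda)]=[\bar\Delta(\mu):L(\lambda)]
\]
(combining Proposition~\ref{ext}(2), Proposition~\ref{usdeks}(6), and Lemma~\ref{isodual}(2)) together with the weight-space constraint (Lemma~\ref{useifcats}) that $1_{\ob n}\bar\Delta(\mu)=0$ unless $n=|\mu|+2k$ for some $k\geq 0$: this forces $\mu\in\Lambda_p(m-2d)$ for some $d\geq 0$; for $\mu=\lambda$ the simplicity of $1_{\ob m}\bar\Delta(\lambda)=D(\lambda)$ yields multiplicity one, while for $\mu\neq\lambda$ with $|\mu|=m$ the simplicity of $D(\mu)\not\cong D(\lambda)$ yields multiplicity zero, forcing $d>0$. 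The main technical obstacle will be verifying the $K\mathfrak S_{m-2i}$-freeness of $J^i 1_{\ob m}/J^{i+1}1_{\ob m}$, a careful combinatorial check that matches the $K\mathfrak S_{m-2i}$-action on the tops of vertical strands with a chosen system of representatives for cap placements at the bottom.
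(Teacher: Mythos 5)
Your construction is essentially the paper's: your $Q(\lambda)=Be_\lambda$ is exactly the module $\hat P(\lambda)=B\otimes_{B^0}Y(\lambda)$ used there, your $J$-adic filtration of $B^\sharp 1_{\ob m}$ coincides with the paper's filtration by $B^\sharp_i=\bigoplus_{k\geq i}B^\sharp[k]$, and the final multiplicity bookkeeping via Proposition~\ref{ext}, Proposition~\ref{usdeks}(6) and Lemma~\ref{isodual}(2) is the same as the paper's route to (2)--(3). Two points, however, deserve attention. First, the step you defer as ``the main technical obstacle'' --- that each section $J^i1_{\ob m}/J^{i+1}1_{\ob m}$ is free over $K\mathfrak S_{m-2i}$ --- is not an obstacle at all if you use Lemma~\ref{triangularde}, which you announced as your central ingredient but then did not invoke here: the paper's two-line computation gives $B^\sharp[k]1_{\ob m}\cong\bigoplus_n B^01_{\ob n}\otimes_K 1_{\ob n}B^+[k]1_{\ob m}\cong(K\mathfrak S_{m-2k})^{\oplus\dim 1_{\ob{m-2k}}B^+[k]1_{\ob m}}$, so the combinatorial check you postpone should be replaced (or completed) by this.

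Second, and this is a genuine logical gap: you justify ``$\Delta(\mu)$ has simple head $L(\mu)$'' by the inference $\End_B(\Delta(\mu))\subseteq\End_{B^\flat}(\Delta(\mu))=K$ ``shows the head is simple.'' A trivial endomorphism ring does not imply a simple head (an indecomposable module can have decomposable top), so this step as written is invalid; moreover, simple-headedness of the standard modules is only established in the paper later (in the proof of Lemma~\ref{njsjxs}) using the very proposition you are proving, so you cannot import it here without circularity. The conclusion you need is nevertheless reachable without it: if $\Delta(\mu)$ is the top section of your inherited flag of $P(\lambda)$, then any simple quotient of $\Delta(\mu)$ is a quotient of $P(\lambda)$, hence isomorphic to $L(\lambda)$; by the adjunction $\Hom_B(\Delta(\mu),L(\lambda))\cong\Hom_{B^\sharp}(Y(\mu),L(\lambda))$ a nonzero map has image in $1_{\ob{|\mu|}}L(\lambda)$, forcing $|\mu|\geq m$, while your flag gives $|\mu|=m-2d\leq m$; hence $|\mu|=m$, and then $\Hom_{K\mathfrak S_m}(Y(\mu),D(\lambda))\neq0$ forces $\mu=\lambda$. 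With that repair (and the Lemma~\ref{triangularde} computation above), your argument matches the paper's proof.
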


\begin{proof}
Thanks to Lemma~\ref{useifcats}, we have
 $B^\sharp [k]\otimes _{B^0}Y(\lambda)\neq0$ only if $0\leq k\leq \lceil m/2\rceil $. In this case,   $1_{\ob n}B^\sharp [k]\otimes _{B^0}Y(\lambda)\neq0$ only if $n=m-2k$.
So $B^0$ acts on the  finite dimensional $B^0$-module  $ B^\sharp [k]\otimes _{B^0}Y(\lambda)$ via $K\mathfrak S_{m-2k}$.
 As a   $B^0$-module, $B^\sharp \otimes _{B^0}Y(\lambda)$ has  a  filtration
$$0\subset B^\sharp _{\lceil m/2\rceil}\otimes _{B^0}Y(\lambda)\subset \cdots \subset B^\sharp _{1}\otimes _{B^0}Y(\lambda) \subset B^\sharp _{0}\otimes _{B^0}Y(\lambda)=B^\sharp \otimes _{B^0}Y(\lambda), $$
where $B^\sharp _i=\oplus_{k\geq i}B^\sharp [k]$. Note that $B^\sharp B^\sharp[k]\subset B^\sharp[l]$ with $l\geq k$. So, the above filtration is a  $B^\sharp$-module filtration and each section
$B^\sharp [k]\otimes _{B^0}Y(\lambda)$ is a $B^\sharp $-module such that $ \oplus _{d\in \mathbb N\setminus\{0\}}B^\sharp[d]$ acts trivially.
This shows that  $\hat P(\lambda)$   has a finite filtration with sections $ \Delta(B^\sharp [k]\otimes _{B^0}Y(\lambda) )$, $0\leq k\leq \lceil m/2\rceil$, where $\hat P(\lambda):=B\otimes _{B^0}Y(\lambda)\cong B\otimes _{B^\sharp}B^\sharp \otimes _{B^0}Y(\lambda)$.  The top section is $\Delta(B^\sharp [0]\otimes _{B^0}Y(\lambda))=\Delta(Y(\lambda))=\Delta(\lambda)$.

Suppose $k>0$.
Since $Y(\lambda)$ is a projective $B^0$-module,  $ B^\sharp [k]\otimes _{B^0}Y(\lambda)=B^\sharp [k]1_{\ob m}\otimes _{B^0}Y(\lambda)$,    which is   a direct summand of  $B^\sharp [k]1_{\ob m}$. By Lemma~\ref{triangularde},
 $$B^\sharp [k]1_{\ob m}\cong\bigoplus _{n\in\mathbb N}B^01_{\ob n}\otimes _K 1_{\ob n} B^+[k]1_{\ob m}\overset{Lem.~\ref{useifcats}} \cong (B^01_{\ob {m-2k}})^{\dim 1_{\ob {m-2k}}B^+[k]1_{\ob m}}= (K \mathfrak S_{m-2k})^{\dim 1_{\ob {m-2k}}B^+[k]1_{\ob m}},$$
 where $B^+[k]$ is the subspace of $B^+$ spanned by all Brauer diagrams with $k$ caps.
So, $B^\sharp [k]1_{\ob m}$ is a projective $B^0$-module and hence $ B^\sharp [k]\otimes _{B^0}Y(\lambda)$ is a finite dimensional projective $B^0$-module. In fact, it is  a projective  $K \mathfrak S_{m-2k}$-module, which is  a direct sum of some $Y(\mu)$'s with $\mu\in\Lambda_p(m-2k)$. This proves (1)-(3) if $P(\lambda)$ is replaced by $\hat P(\lambda)$.
Since $Y(\lambda)$ is a direct summand of $B^0$,  $\hat P(\lambda)$ is a projective $B$-module.
We have $$\begin{aligned} \dim \Hom_{B}(\hat P(\lambda),L(\lambda))& =\dim\Hom_{B^0}(Y(\lambda), \oplus_{m\in\mathbb N}\Hom_B(B1_{\ob m},L(\lambda)))\\ & =\dim\Hom_{B^0}(Y(\lambda),L(\lambda)) =\dim\Hom_{B^0}(Y(\lambda),1_{\ob m}L(\lambda))\\ & =\dim\Hom_{B^0}(Y(\lambda),D(\lambda))=1.\end{aligned} $$
So, it gives an epimorphism from $\hat P(\lambda)$ to $L(\lambda)$. Since $\hat P(\lambda)$ is  projective, and  $P(\lambda)$ is the projective cover of $L(\lambda)$,  $P(\lambda)$ has to be a direct summand of $\hat P(\lambda)$.
Thanks to Lemma~\ref{deltafildire}, we have (1)-(3) for  $P(\lambda)$.
\end{proof}

Let $K_0(B\text{-pmod})$ be the split   Grothendieck group of $B\text{-pmod}$. Then
$K_0(B\text{-pmod})$ is the free abelian group on the isomorphism classes of finitely generated projective $B$-modules modulo
$[M]-[N]-[L]$ whenever $M\cong N\oplus L$. Let $K_0(B\text{-mod}^\Delta)$ be the Grothendieck group of $B\text{-mod}^\Delta$.

\begin{Cor}\label{xijixs}Keep the notations above.
\begin{enumerate}
\item [(1)] There is an embedding $B\text{-\rm pmod} \hookrightarrow B\text{-\rm mod}^\Delta$ and   $K_0(B\text{-\rm pmod})\cong K_0(B\text{-\rm mod}^\Delta)$.
\item [(2)] $(P(\lambda):\Delta(\mu))=[\bar\Delta(\mu):L(\lambda)]$ for any $\lambda,\mu\in\Lambda_p$.
\item [(3)] Suppose that $\mu\in\Lambda(m)$. Then $[\tilde \Delta(\mu): L(\lambda)]=\sum_{\nu\in\Lambda_p(m)} [S(\mu): D(\nu)](P(\lambda):\Delta(\nu))$. In particular, $[\tilde \Delta(\mu): L(\lambda)]\neq 0$ only if $ \mu\in \Lambda(m)$ and $\lambda\in \Lambda_p(m+2d)$ for some $d,m\in\mathbb N$.
\end{enumerate}
\end{Cor}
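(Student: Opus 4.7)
For part (1), I will use Proposition~\ref{dektss}(1) to observe that every indecomposable projective $P(\lambda)$ lies in $B\text{-mod}^\Delta$, which supplies the asserted embedding of categories. Passing to Grothendieck groups, the Krull--Schmidt property from Proposition~\ref{usdeks}(4) makes $K_0(B\text{-pmod})$ a free abelian group on $\{[P(\lambda)]\mid \lambda\in\Lambda_p\}$, while Proposition~\ref{ext}(2) makes $K_0(B\text{-mod}^\Delta)$ a free abelian group on $\{[\Delta(\lambda)]\mid \lambda\in\Lambda_p\}$. Proposition~\ref{dektss}(2)--(3) then expresses $[P(\lambda)]=[\Delta(\lambda)]+\sum_{|\mu|<|\lambda|}(P(\lambda):\Delta(\mu))[\Delta(\mu)]$, which is unitriangular in the partial order by $|\lambda|$; hence the map induced on Grothendieck groups by the embedding is an isomorphism.

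For part (2) I will chain the standard BGG reciprocity identities
\begin{equation*}
(P(\lambda):\Delta(\mu))=\dim\Hom_B(P(\lambda),\bar\nabla(\mu))=[\bar\nabla(\mu):L(\lambda)]=[\bar\Delta(\mu):L(\lambda)],
\end{equation*}
where the three equalities invoke Proposition~\ref{ext}(2) (using $P(\lambda)\in B\text{-mod}^\Delta$ from Proposition~\ref{dektss}(1)), Proposition~\ref{usdeks}(6) (applied to $\bar\nabla(\mu)\in B\text{-lfdmod}$, which holds by Lemma~\ref{ddheuhd}(2)), and Lemma~\ref{isodual}(2) in turn.

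For part (3), I will use $\tilde\Delta(\mu)=\Delta(S(\mu))$ from \eqref{stahhss}. A composition series of the classical Specht module $S(\mu)$ in $K\mathfrak S_m$-mod has sections $D(\nu)$ with multiplicity $[S(\mu):D(\nu)]$ for $\nu\in\Lambda_p(m)$. Applying the exact standardization functor $\Delta$ of Lemma~\ref{ddheuhd}(1) produces a filtration of $\tilde\Delta(\mu)$ whose sections are $\Delta(D(\nu))=\bar\Delta(\nu)$ with the same multiplicities. Counting composition factors of type $L(\lambda)$ and substituting $[\bar\Delta(\nu):L(\lambda)]=(P(\lambda):\Delta(\nu))$ from part (2) yields the displayed formula. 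The ``in particular'' assertion then follows because $(P(\lambda):\Delta(\nu))\neq 0$ with $\nu\in\Lambda_p(m)$ forces $\lambda\in\Lambda_p(m+2d)$ for some $d\geq 0$ by Proposition~\ref{dektss}(1).

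I do not anticipate any serious obstacle: the whole argument is a sequence of invocations of the preparatory results already in hand. The delicate bookkeeping points are (a) that the transition matrix in (1) is genuinely invertible, which is guaranteed by the strict inequality $|\mu|<|\lambda|$ forced by Proposition~\ref{dektss}(3), and (b) that the modules appearing in the BGG chain of (2) lie in the categories required by the cited propositions, which follows from Lemma~\ref{ddheuhd}(2) together with Proposition~\ref{dektss}(1).
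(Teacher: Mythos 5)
Your proof is correct and follows essentially the same route as the paper: part (1) via the $\Delta$-basis of $K_0(B\text{-mod}^\Delta)$ from Proposition~\ref{ext} and the unitriangular transition matrix from Proposition~\ref{dektss}, part (2) via the chain $\bar\Delta \leftrightarrow \bar\nabla$ (Lemma~\ref{isodual}), Proposition~\ref{usdeks}(6), and Proposition~\ref{ext}, and part (3) via exactness of $\Delta$ applied to a composition series of $S(\mu)$ followed by substitution of (2). The only cosmetic difference is that you invoke Proposition~\ref{ext}(2) where the paper argues directly from Proposition~\ref{ext}(1), which amounts to the same thing.
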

\begin{proof}  Thanks to Proposition~\ref{ext}(1),
 $\{[\Delta(\lambda)]\mid \lambda\in\Lambda_p\}$ is a $\mathbb Z$-basis of $K_0(B\text{-mod}^\Delta)$.
By Proposition~\ref{dektss},   the transition matrix between $\{[P(\lambda)]\mid \lambda\in\Lambda_p \}$ and $\{[\Delta(\lambda)]\mid \lambda\in\Lambda_p\}$ is invertible. This proves (1).

Thanks to Lemma~\ref{isodual}(2) and Proposition~\ref{usdeks}(6),  we have  $$[\bar\Delta(\mu):L(\lambda)]=[\bar\nabla(\mu):L(\lambda)]=\dim\Hom_B(P(\lambda),\bar\nabla(\mu)).$$
On the other hand, $P(\lambda)\in B$-mod$^{\Delta}$ (see Proposition~\ref{dektss}(1)).
 Using the  Hom functor $\Hom_B(?, \bar \nabla(\mu))$    and   Proposition~\ref{ext}(1), we have $\dim\Hom_B(P(\lambda),\bar\nabla(\mu))= (P(\lambda):\Delta(\mu))$, proving (2).

 Since $\Delta$ is exact,   $\tilde\Delta(\mu)$ has a $\bar\Delta$-flag such that each section is of form $\bar\Delta(\nu)$ for some $\nu\in\Lambda_p(m)$ and the multiplicity of
 $\bar\Delta(\nu)$ (denoted by $(\tilde\Delta(\mu): \bar\Delta(\nu))$) in this flag is $ [S(\mu): D(\nu)]$. So,

$$\begin{aligned}~\quad
 [\tilde \Delta(\mu): L(\lambda)] &=\sum_{\nu\in\Lambda_p(m)} (\tilde\Delta(\mu): \bar\Delta(\nu))[\bar\Delta(\nu):L(\lambda)]\\
&=\sum_{\nu\in\Lambda_p(m)} [S(\mu): D(\nu)][\bar\Delta(\nu):L(\lambda)]\\
&\overset{(2)}=\sum_{\nu\in\Lambda_p(m)} [S(\mu): D(\nu)](P(\lambda):\Delta(\nu)).\end{aligned}$$
The remaining statement of (3) follows from Proposition~\ref{dektss}(1).
 \end{proof}

%

\begin{Cor}\label{ijxxexeu} Suppose  $\lambda\in\Lambda_p(m)$. Then   $P(\lambda)$ has a finite $\tilde\Delta$-flag  such that $\mu\in \coprod_{d=0}^{\lceil m/2\rceil} \Lambda(m-2d)$ if $\tilde\Delta(\mu)$ appears as a section. Further, the  multiplicity of $\tilde\Delta(\mu)$ in this flag is equal to $[\tilde\Delta(\mu):L(\lambda)]$.
\end{Cor}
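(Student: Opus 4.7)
The plan is to build a $\tilde\Delta$-flag of $P(\lambda)$ by refining each section of the $\Delta$-flag provided by Proposition~\ref{dektss}, and then match the resulting multiplicities with the decomposition multiplicities of Corollary~\ref{xijixs}(3). The strategy exploits the exactness of the standardization functor $\Delta$ (Lemma~\ref{ddheuhd}(1)) together with the classical Specht filtration of Young modules for the symmetric group.

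First, I would invoke Proposition~\ref{dektss} to get a finite filtration of $P(\lambda)$ whose sections are of the form $\Delta(\nu)=\Delta(Y(\nu))$ for $\nu\in\Lambda_p(m-2d)$ with $0\le d\le\lceil m/2\rceil$. Next, for each $p$-regular partition $\nu$, I would appeal to the fact (standard in the modular representation theory of symmetric groups, coming e.g.\ from tilting theory for the Schur algebra) that the Young module $Y(\nu)$ admits a Specht filtration whose sections are $S(\mu)$ for various $\mu\in\Lambda(|\nu|)$, with multiplicities satisfying the reciprocity
\begin{equation*}
(Y(\nu):S(\mu)) \;=\; [S(\mu):D(\nu)].
\end{equation*}
In characteristic $0$, $Y(\nu)=S(\nu)=D(\nu)$ and this is trivial.

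Now, since $\Delta\colon B^0\text{-lfdmod}\to B\text{-lfdmod}$ is exact (Lemma~\ref{ddheuhd}(1)), applying it to the Specht filtration of $Y(\nu)$ yields a filtration of $\Delta(\nu)$ whose sections are $\Delta(S(\mu))=\tilde\Delta(\mu)$, with the same multiplicities $[S(\mu):D(\nu)]$. Inserting these refinements into the $\Delta$-flag of $P(\lambda)$ from Step~1 produces a finite $\tilde\Delta$-flag of $P(\lambda)$ whose sections $\tilde\Delta(\mu)$ have $\mu\in\Lambda(m-2d)$ for $0\le d\le\lceil m/2\rceil$, confirming the constraint on the Young-diagram sizes in the statement.

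Finally, I would compute the total multiplicity of a given $\tilde\Delta(\mu)$ in this refined flag. Collecting contributions from each $\Delta(\nu)$ and using the reciprocity above,
\begin{equation*}
(P(\lambda):\tilde\Delta(\mu)) \;=\; \sum_{\nu\in\Lambda_p(|\mu|)}(P(\lambda):\Delta(\nu))\,[S(\mu):D(\nu)],
\end{equation*}
which is precisely the right-hand side of Corollary~\ref{xijixs}(3), hence equals $[\tilde\Delta(\mu):L(\lambda)]$. The main obstacle is really only the well-definedness of the reciprocity used in Step~2, namely that the multiplicities in a Specht filtration of $Y(\nu)$ do not depend on the choice of filtration; this is handled by the Ext-orthogonality between Specht and dual Specht modules in Schur-algebra tilting theory and is entirely internal to $K\mathfrak{S}_m$, so no new category-theoretic input is needed beyond what has been developed.
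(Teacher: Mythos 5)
Your proposal is correct and follows essentially the same route as the paper: refine the $\Delta$-flag of $P(\lambda)$ from Proposition~\ref{dektss} by applying the exact functor $\Delta$ to Specht filtrations of the Young modules $Y(\nu)$, and identify the resulting multiplicity $\sum_{\nu}(P(\lambda):\Delta(\nu))[S(\mu):D(\nu)]$ with $[\tilde\Delta(\mu):L(\lambda)]$. The only cosmetic difference is that you cite Corollary~\ref{xijixs}(3) directly at the last step, whereas the paper re-expands that sum via $(P(\lambda):\Delta(\nu))=[\bar\Delta(\nu):L(\lambda)]$ and $(\tilde\Delta(\mu):\bar\Delta(\nu))=[S(\mu):D(\nu)]$; both rest on the same reciprocity $(Y(\nu):S(\mu))=[S(\mu):D(\nu)]$.
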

\begin{proof} Since $Y(\lambda)$ has a Specht filtration, $\Delta(\lambda)$ has a finite $\tilde\Delta$-flag. Thanks to
Proposition ~\ref{dektss},  $P(\lambda)$ has a finite $\tilde\Delta$-flag, too. Denote by $(P(\lambda):\tilde \Delta(\mu))$
the multiplicity of $\tilde\Delta(\mu)$ in this flag of $P(\lambda)$. Then

 \begin{equation}\label{uxhuexuexeu}
 \begin{aligned} (P(\lambda):\tilde \Delta(\mu))& =\sum_{\nu } (P(\lambda): \Delta(\nu)) (\Delta(\nu):\tilde \Delta(\mu))
 =\sum_{\nu } [\bar \Delta(\nu): L(\lambda)](Y(\nu): S(\mu))\\ & =\sum_\nu [\bar \Delta(\nu): L(\lambda)] [S(\mu): D(\nu)]
 = \sum_\nu [\bar \Delta(\nu): L(\lambda)](\tilde \Delta(\mu): \bar \Delta(\nu))\\ & =[\tilde \Delta(\mu): L(\lambda)],\\
 \end{aligned}\end{equation}
 where $\nu$ is over the set  $ \coprod_{d=0}^{\lceil m/2\rceil} \Lambda_p(m-2d)$ (see Proposition~\ref{dektss}(1)).
 If $[S(\mu): D(\nu)]\neq0$ in \eqref{uxhuexuexeu} for some  $\nu\in\Lambda_p(n)$, then $\mu\in \Lambda(n)$. So, $\mu\in \coprod_{d=0}^{\lceil m/2\rceil} \Lambda(m-2d)$.
  \end{proof}

\section{Induction and Restriction functors  }

The aim of this section is to study the induction and restriction functors on $B^0$-mod and  $B$-mod.
The methods we use in this section are parallel to   those  for the oriented Brauer category and oriented skein  category in \cite{Br,Re}. However,  the resulting combinatorics is completely different.

 For all admissible $i,m$, we  define
\begin{equation}\label{jdijdijxs}
\begin{aligned}
      X_i 1_{\ob m}&=1_{\ob m}X_i= \begin{tikzpicture}[baseline = 25pt, scale=0.35, color=\clr]
       \draw[-,thick](0,1.1)to[out= down,in=up](0,3.9);
       \draw(0.5,1.1) node{$ \cdots$}; \draw(0.5,3.9) node{$ \cdots$};
       \draw[-,thick](1.8,1.1)to (1.8,3.9);
       \draw(3,4.5)node{\tiny$i$}; 
        \draw[-,thick] (3,1) to[out=up, in=down] (3,4);
         \draw[-,thick](4,1.1)to[out= down,in=up](4,3.9);\draw (3,2.5) \bdot;
         \draw(4.5,1.1) node{$ \cdots$}; \draw(4.5,3.9) node{$ \cdots$};
         \draw[-,thick](5.5,1.1)to[out= down,in=up](5.5,3.9);
           \end{tikzpicture},~~
           S_i 1_{\ob m}=1_{\ob m}S_i= \begin{tikzpicture}[baseline = 25pt, scale=0.35, color=\clr]
       \draw[-,thick](0,1.1)to[out= down,in=up](0,3.9);
       \draw(0.5,1.1) node{$ \cdots$}; \draw(0.5,3.9) node{$ \cdots$};
       \draw[-,thick](1.5,1.1)to[out= down,in=up](1.5,3.9);
       \draw[-,thick] (2,1) to[out=up, in=down] (3,4);
       \draw(2,4.5)node{\tiny$i$}; \draw(3.2,4.5)node{\tiny$i+1$};
        \draw[-,thick] (3,1) to[out=up, in=down] (2,4);
         \draw[-,thick](4,1.1)to[out= down,in=up](4,3.9);
         \draw(4.5,1.1) node{$ \cdots$}; \draw(4.5,3.9) node{$ \cdots$};
         \draw[-,thick](5.5,1.1)to[out= down,in=up](5.5,3.9);
           \end{tikzpicture}.
           \end{aligned}
           \end{equation}
           where  $i$ labels the $i$th string from the left to the right. The object at the bottom of each diagram is $\ob m$.
The symbols $X_i,S_i$ are ambiguous until an object is specified by multiplying some well-defined morphism in $\AB(\delta)$.
If there is no confusion on the objects, we also write $X_i$ and $S_i$ for $X_i1_{\ob m}$ and $S_i1_{\ob m}$, respectively.

Fix a positive integer $m$. The group algebra of the symmetric group $K \mathfrak S_{m}$ can be identified with the subalgebra of $\End_{\B}(\ob m)$ generated by $S_i1_{\ob m}$, $1\leq i\leq m-1$. For the simplification of notation, we denote $S_i1_{\ob m}$ by $S_i$, the usual transposition of $(i, i+1)$ under the above identification. Let
\begin{equation}
\text{ind}^m_{m-1}: K \mathfrak S_{m-1}\text{-mod}\rightarrow K \mathfrak S_{m}\text{-mod}, \quad \text{res}^m_{m-1}: K \mathfrak S_{m}\text{-mod}\rightarrow K \mathfrak S_{m-1}\text{-mod}
\end{equation}
be the induction and restriction functors with respect to the natural embedding $K \mathfrak S_{m-1}\hookrightarrow K \mathfrak S_{m} $, which sends $S_i\mapsto S_i$ for all $1\le i\le m-2$.
So,
\begin{equation}\label{reinfu}
\text{ind}^m_{m-1}= K \mathfrak S_{m}\otimes _{K \mathfrak S_{m-1} }? \text{ and $\text{res}^m_{m-1} = K \mathfrak S_{m}\otimes _{K \mathfrak S_{m} }?$,}\end{equation}
where $K \mathfrak S_m$ is viewed as  both a  $(K \mathfrak S_{m},  K \mathfrak S_{m-1})$-bimodule and a $(K \mathfrak S_{m-1},  K \mathfrak S_{m})$-bimodule, respectively. It is  well-known that these two functors are biadjoint pairs. Further, the Jucys-Murphy element
\begin{equation}\label{Jusss}
L_m=\sum_{1\leq i<m}(i,m)\end{equation}
 of $K\mathfrak S_m$ centralizes the subalgebra $K \mathfrak S_{m-1}$. Multiplying  $L_m$ on the right (resp., left) of $K \mathfrak S_{m}$  gives an endomorphism of the $(K \mathfrak S_{m},  K \mathfrak S_{m-1})$-bimodule (resp., $(K \mathfrak S_{m-1},  K \mathfrak S_{m})$-bimodule) $K \mathfrak S_m$. Let $ (K \mathfrak S_m)_i$ be the generalized $i$-eigenspace  of $L_m$ on  $K \mathfrak S_m$.
For any $i\in K$, define
$i\text{-ind}^m_{m-1}= (K\mathfrak S_{m})_i\otimes_{K \mathfrak S_{m-1} }?$ and $i\text{-res}^m_{m-1} = (K \mathfrak S_{m})_i\otimes _{K \mathfrak S_{m} }?$.
It is well-known that
\begin{equation}\label{bra1} \text{ind}^m_{m-1}=\bigoplus_{i\in\mathbb Z1_K}i\text{-ind}^m_{m-1}, \quad \text{res}^m_{m-1}=\bigoplus_{i\in\mathbb Z1_K}i\text{-res}^m_{m-1}. \end{equation}

\begin{Lemma}\label{actionbimo} Let $E$ and $F$ be  the endo-functors  of $B^0$-mod such that  $E:=B^0_L\otimes _{B^0} ?$ and   $F:= B^0_R\otimes _{B^0} ?$, where both  $ B^0_L$ and $B^0_R$ are    $(B^0,B^0)$-bimodule such that
\begin{itemize}\item [(1)] $1_{\ob m} B^0_L1_{\ob n}=1_{\ob {m+1}}B^01_{\ob n} $, and $1_{\ob m} B^0_R1_{\ob n}=1_{\ob {m}}B^01_{\ob {n+1}} $,
  \item[(2)]   $B^0$ acts on   the right (resp., left )  of $B^0_L$  (resp.,  $B^0_R$) via    the usual multiplication,
  \item [(3)]
$a\cdot f= (a ~\begin{tikzpicture}[baseline = 10pt, scale=0.5, color=\clr]
                \draw[-,thick] (0,0.5)to[out=up,in=down](0,1.2);
    \end{tikzpicture})\circ f$,   and $g\cdot a=g\circ (a~\begin{tikzpicture}[baseline = 10pt, scale=0.5, color=\clr]
                \draw[-,thick] (0,0.5)to[out=up,in=down](0,1.2);
    \end{tikzpicture})$, for all   $ f\in B^0_L, g\in  B^0_R$ and  $a\in B^0 $, where  $ab$ is  $a\otimes b$ for any two  Brauer diagrams $a$ and $b$ (see \eqref{com1}).
\end{itemize} Then
 $E=\bigoplus_{m\in\mathbb N} \text{res}^{m+1}_{m}$ and $ F=\bigoplus_{m\in\mathbb N} \text{ind}^{m+1}_{m} $.
    \end{Lemma}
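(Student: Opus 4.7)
The plan is to identify the bimodules $B^0_L$ and $B^0_R$ explicitly in terms of group algebras of symmetric groups and recognize the associated tensor functors as the classical induction and restriction functors.

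First, I would use Theorem~\ref{basisofb}(2) together with \eqref{isomosym} to observe that $1_{\ob p} B^0 1_{\ob q}$ is nonzero only if $p=q$, in which case it is canonically identified with $K\mathfrak{S}_p$. Combined with condition (1) of the lemma, this shows that $1_{\ob m} B^0_L 1_{\ob n}$ is nonzero only for $n=m+1$, in which case it equals $1_{\ob{m+1}}B^0 1_{\ob{m+1}} \cong K\mathfrak{S}_{m+1}$; analogously, $1_{\ob m}B^0_R 1_{\ob n}$ is nonzero only for $m=n+1$ and then equals $K\mathfrak{S}_m$.

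Next I would identify the bimodule structures. Under condition (2), the right $B^0$-action on $B^0_L$ along the $(m,m+1)$-strand is the usual right multiplication of $K\mathfrak{S}_{m+1}$ on itself. For the left action given in (3), taking $a \in K\mathfrak{S}_m$ (viewed as a crossing diagram with no cups or caps on $m$ strings), the morphism $a \otimes 1_{\ob 1}$ is obtained by adjoining one vertical strand on the right; by the defining relations \eqref{B1} this realizes precisely the standard embedding $K\mathfrak{S}_m \hookrightarrow K\mathfrak{S}_{m+1}$ whose image is the subgroup fixing the last point. Hence as a $(K\mathfrak{S}_m, K\mathfrak{S}_{m+1})$-bimodule, $1_{\ob m} B^0_L 1_{\ob{m+1}}$ is $K\mathfrak{S}_{m+1}$ with the left action by restriction through the standard embedding and the right action by multiplication; the analogous statement holds for $B^0_R$ as a $(K\mathfrak{S}_m,K\mathfrak{S}_{m-1})$-bimodule.

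Finally, for any $V \in B^0$-mod, writing $V = \bigoplus_n V_n$ with $V_n = 1_{\ob n} V$, I would compute weight-space-wise, using that $B^0$ acts diagonally block-wise:
$$1_{\ob m}(EV) \;=\; 1_{\ob m}B^0_L \otimes_{B^0} V \;=\; K\mathfrak{S}_{m+1}\otimes_{K\mathfrak{S}_{m+1}} V_{m+1} \;=\; V_{m+1},$$
with residual $K\mathfrak{S}_m$-action by restriction along the standard embedding; comparing with \eqref{reinfu} this is exactly $\operatorname{res}^{m+1}_m V_{m+1}$, whence $E = \bigoplus_{m\in\mathbb N}\operatorname{res}^{m+1}_m$. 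An entirely parallel computation for $F$ gives $1_{\ob{m+1}}(FV) = K\mathfrak{S}_{m+1}\otimes_{K\mathfrak{S}_m} V_m = \operatorname{ind}^{m+1}_m V_m$, proving the second claim. There is no serious obstacle; the only point requiring care is the identification of the bimodule actions coming from (3) with the standard embeddings, which is immediate from the diagrammatic meaning of $a \otimes 1_{\ob 1}$.
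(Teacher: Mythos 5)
Your proposal is correct and takes essentially the same route as the paper, which simply declares the lemma immediate from \eqref{reinfu} and the definitions of $E$ and $F$; you have just made explicit the routine weight-space computation identifying $1_{\ob m}B^0_L1_{\ob{m+1}}$ and $1_{\ob{m+1}}B^0_R1_{\ob m}$ with $K\mathfrak S_{m+1}$ and the action in (3) with the standard embedding $K\mathfrak S_m\hookrightarrow K\mathfrak S_{m+1}$.
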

\begin{proof}The results follow immediately from \eqref{reinfu} and the definitions of $E$ and $F$.\end{proof}


Under the identification between $K\mathfrak S_m$ and $1_{\ob m}B^01_{\ob m}$, the transposition $(k,l)$ is the morphism in $ \End_{\B(\delta_0)}(\ob m)$
such that its diagram is the crossing of the $k$th and $l$th strands, for all $1\leq k,l\leq m$. For example, $S_i1_\ob{m}=(i,i+1)$.

 \begin{Lemma}\label{bim123} Suppose $\delta_0\in K$. Define $X^0_L: B^0_L\rightarrow B^0_L$ and $X^0_R: B^0_R\rightarrow B^0_R$ such that
 \begin{itemize}\item [(1)]  the restriction of $X^0_L$ to  $ 1_{\ob {m+1}}B^0$ is given by  the left
multiplication  of  $\frac{\delta_0-1}{2}1_{\ob {m+1}}+ L_{m+1} \in $ $ \End_{\B(\delta_0)}(\ob m+1)$,
 \item [(2)]  the restriction of $X^0_R$ to   $B^0 1_{\ob {m+1}}$ is given by  the right
multiplication  of  $\frac{\delta_0-1}{2}1_{\ob {m+1}}+ L_{\ob m+1}$.
\end{itemize}
Then both   $X^0_L$ and $X^0_R$ are $(B^0,B^0)$-bimodule endomorphisms.
 \end{Lemma}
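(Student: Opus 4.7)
The plan is to unpack both actions on $B^0_L$ and $B^0_R$ and reduce the bimodule compatibility to a standard commutation identity for the Jucys--Murphy element. Since $B^0$ has $1_{\ob n}B^01_{\ob m}=0$ unless $n=m$ (there are no cups or caps, so the numbers of top and bottom endpoints must agree), we may assume throughout that the homogeneous component lies in $K\mathfrak S_m$ for some $m$.

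I would first dispose of the scalar contribution. Write $L'_k=\tfrac{\delta_0-1}{2}\,1_{\ob k}+L_k$. Since $\tfrac{\delta_0-1}{2}$ is a scalar and each $1_{\ob k}$ is a unit, the scalar piece commutes trivially with every morphism in $B^0$, so the entire content of the lemma concerns the Jucys--Murphy term $L_{m+1}=\sum_{1\le i<m+1}(i,m+1)$.

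For $X^0_L$ the right $B^0$-action on $B^0_L$ is ordinary composition, hence $X^0_L(f\cdot b)=L'_{m+1}\circ f\circ b=X^0_L(f)\cdot b$ follows at once from associativity of composition in $\B(\delta_0)$. The nontrivial point is $B^0$-linearity for the left action, where for $a\in 1_{\ob m}B^01_{\ob m}=K\mathfrak S_m$ and $f\in 1_{\ob{m+1}}B^01_{\ob n}$ one has $a\cdot f=(a\otimes 1_{\ob 1})\circ f$. The identity to verify is
\begin{equation*}
L_{m+1}\circ (a\otimes 1_{\ob 1})=(a\otimes 1_{\ob 1})\circ L_{m+1}\qquad\text{in }\End_{\B(\delta_0)}(\ob{m+1}).
\end{equation*}
This is the classical fact that the Jucys--Murphy element $L_{m+1}$ of $K\mathfrak S_{m+1}$ centralizes the subalgebra $K\mathfrak S_m$: by $K$-linearity it suffices to check it on the Coxeter generators $S_i$ for $1\le i\le m-1$, and for each such $i$ a direct computation $S_i L_{m+1}S_i=\sum_{j}(S_i(j),m+1)=L_{m+1}$ (using that $S_i$ merely permutes the indices $\{1,\dots,m\}$) yields $S_iL_{m+1}=L_{m+1}S_i$. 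Diagrammatically, this is the statement that the crossing of strands $i,i+1$ slides past the sum of ``long crossings'' landing at the $(m+1)$-st strand. The argument for $X^0_R$ is symmetric: the left $B^0$-action is ordinary composition (giving the easy side), while for the right action $f\cdot a=f\circ(a\otimes 1_{\ob 1})$ the same centralizing identity, applied on the right instead of the left, does the job.

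The only genuine content is the commutation of $L_{m+1}$ with $K\mathfrak S_m\subset K\mathfrak S_{m+1}$, which is standard, so I do not anticipate any real obstacle; the main care needed is to track the bimodule conventions correctly so that the side on which the Jucys--Murphy element is multiplied matches the tensoring with $1_{\ob 1}$ on the opposite side.
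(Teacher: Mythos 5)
Your proposal is correct and follows essentially the same route as the paper, whose proof consists precisely of the observation that $(a\otimes 1_{\ob 1})\circ L_{m+1}=L_{m+1}\circ(a\otimes 1_{\ob 1})$ for all $a\in 1_{\ob m}B^01_{\ob m}\cong K\mathfrak S_m$, i.e.\ that the Jucys--Murphy element centralizes $K\mathfrak S_m$; you merely spell out the trivial scalar part, the easy associativity side, and the generator check explicitly.
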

 \begin{proof}
Thanks to
 $ (a~\begin{tikzpicture}[baseline = 10pt, scale=0.5, color=\clr]
                \draw[-,thick] (0,0.5)to[out=up,in=down](0,1.2);
    \end{tikzpicture}) \circ L_{m+1}=L_{m+1}\circ (a~\begin{tikzpicture}[baseline = 10pt, scale=0.5, color=\clr]
                \draw[-,thick] (0,0.5)to[out=up,in=down](0,1.2);
    \end{tikzpicture})$,  $\forall a\in 1_{\ob m}B^01_{\ob m}$, we have the results, immediately.
 \end{proof}

We define  $B^0_{L, i}$ (resp.,  $B^0_{R, i}$)  to be the generalized $i$-eigenspace of $X^0_L$ (resp., $X^0_R$) on $B^0_L$ (resp., $B^0_R$).  The following result follows from Lemma~\ref{actionbimo}, \eqref{bra1} and  well-known results on symmetric groups.

\begin{Lemma}\label{filteration}
For $i\in  K$, let $E_i=B^0_{L, i}\otimes _{B^0} ?$ and   $F_i:= B^0_{R, i}\otimes _{B^0} ?$.  Then
 \begin{enumerate}\item [(1)] $E_{\frac{\delta_0-1}{2}+i}=\bigoplus _{m\in\mathbb N}i\text{-res}^{m+1}_m $ and $F_{\frac{\delta_0-1}{2}+i}=\bigoplus _{m\in\mathbb N}i\text{-ind}^{m+1}_m $,
  \item [(2)] $E=\bigoplus _{i\in I_0}E_i$ and  $F=\bigoplus _{i\in I_0}F_i$, where $I_0=\frac{\delta_0-1}{2}+\mathbb Z1_  K \subset   K$,
\item [(3)] $E_i S(\lambda)$ has a multiplicity-free filtration with sections $S(\mu)$, where  $ \mu\in\Lambda(m-1)$
is obtained by removing  a box of content $i-\frac{\delta_0-1}{2}$ from  $ \lambda\in\Lambda(m)$,
\item [(4)] $F_i S(\lambda)$ has a multiplicity-free filtration with sections $S(\mu)$, where   $ \mu\in\Lambda(m+1)$
is obtained by adding   a box of content $i-\frac{\delta_0-1}{2}$ to $\lambda\in\Lambda(m)$.
\end{enumerate}
\end{Lemma}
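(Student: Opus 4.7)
The plan is to reduce everything to the classical theory of Jucys--Murphy operators acting on symmetric group algebras, using the identifications already established in Lemmas~\ref{actionbimo} and \ref{bim123}.

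\emph{Parts (1) and (2).} Combining these two lemmas, the graded piece of $B^0_L$ indexed by $m$ is $K\mathfrak{S}_{m+1}$, viewed as a $(K\mathfrak{S}_m, K\mathfrak{S}_{m+1})$-bimodule via the usual right multiplication and via the left action of $K\mathfrak{S}_m$ through the standard embedding $K\mathfrak{S}_m\hookrightarrow K\mathfrak{S}_{m+1}$ fixing the $(m{+}1)$st letter (this is precisely the content of $a\cdot f=(a\otimes 1_{\ob 1})\circ f$ in Lemma~\ref{actionbimo}). The endomorphism $X^0_L$ of this bimodule is then left multiplication by $\tfrac{\delta_0-1}{2}\cdot\mathrm{id}+L_{m+1}$. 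Since adding a scalar does not change generalized eigenspaces, the generalized $\bigl(\tfrac{\delta_0-1}{2}+i\bigr)$-eigenspace of $X^0_L$ on this piece is exactly the generalized $i$-eigenspace of $L_{m+1}$, i.e.\ $(K\mathfrak{S}_{m+1})_i$. Tensoring over $B^0$ and recalling \eqref{reinfu} yields $E_{\tfrac{\delta_0-1}{2}+i}=\bigoplus_{m\in\mathbb N}\,i\text{-res}^{m+1}_m$. An entirely parallel argument with $X^0_R$ (which centralizes the $K\mathfrak{S}_m\subset K\mathfrak{S}_{m+1}$ on the other side) proves the $F$-statement, giving (1). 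Part (2) then follows from \eqref{bra1}, after observing that the generalized eigenvalues of $L_{m+1}$ on $K\mathfrak{S}_{m+1}$ always lie in $\mathbb{Z}\cdot 1_K$: on each Specht module $S(\nu)$ the eigenvalues of $L_{m+1}$ are the integer contents of the boxes of $\nu$ (reduced mod $p$ when $p>0$).

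\emph{Parts (3) and (4).} Setting $j:=i-\tfrac{\delta_0-1}{2}$, part (1) identifies $E_i\,S(\lambda)$ with $j\text{-res}^{m}_{m-1}S(\lambda)$ for $\lambda\in\Lambda(m)$, and $F_i\,S(\lambda)$ with $j\text{-ind}^{m+1}_{m}S(\lambda)$. The desired multiplicity-free Specht filtrations are now the classical refined branching rules for $K\mathfrak{S}_m$: $\text{res}^{m}_{m-1}S(\lambda)$ admits a Specht filtration whose successive quotients are the modules $S(\mu)$ for $\mu\in\Lambda(m-1)$ running over the removable boxes of $\lambda$, and the quotient $S(\mu)$ sits in the $j$-restriction summand precisely when the removed box has content $j$; the dual assertion holds for $j$-induction (see e.g.\ \cite[Chapters~8,~11]{Kle}). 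Substituting $j=i-\tfrac{\delta_0-1}{2}$ gives the indexing in the lemma.

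The only real obstacle is bookkeeping: one must carefully verify that the bimodule identification $1_{\ob{m+1}}B^0\cong K\mathfrak{S}_{m+1}$ is compatible with the unusual left $B^0$-action $a\cdot f=(a\otimes 1_{\ob 1})\circ f$ of Lemma~\ref{actionbimo}, so that the embedded copy of $K\mathfrak{S}_m$ really is the stabilizer of the $(m{+}1)$st letter and therefore commutes with $L_{m+1}$, and one must track the scalar shift by $\tfrac{\delta_0-1}{2}$ cleanly between the $X^0_L, X^0_R$-eigenspace decompositions on $B^0_L, B^0_R$ and the $L_{m+1}$-eigenspace decompositions on $K\mathfrak{S}_{m+1}$. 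Once these identifications are in place, all four assertions are immediate consequences of the standard representation theory of symmetric groups.
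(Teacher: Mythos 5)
Your proposal is correct and follows essentially the same route as the paper, which simply asserts that the lemma follows from Lemma~\ref{actionbimo}, \eqref{bra1} and well-known facts about symmetric groups (Jucys--Murphy eigenvalues and the content-refined branching rules); your write-up just makes those identifications and the scalar shift by $\tfrac{\delta_0-1}{2}$ explicit, and the details check out.
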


\begin{Lemma}\label{bimoudisomah} Let  $B_L$ and $B_R$ be  $(B,B)$-bimodules   such that \begin{itemize}\item  $1_{\ob m} B_L1_{\ob n}=1_{\ob {m+1}}B1_{\ob n} $ and $1_{\ob m} B_R1_{\ob n}=1_{\ob {m}}B1_{\ob {n+1}} $, for all $m,n\in\mathbb N$,
\item
$a\cdot f= (a ~\begin{tikzpicture}[baseline = 10pt, scale=0.5, color=\clr]
                \draw[-,thick] (0,0.5)to[out=up,in=down](0,1.2);
    \end{tikzpicture})\circ f$,   and $g\cdot a=g\circ (a~\begin{tikzpicture}[baseline = 10pt, scale=0.5, color=\clr]
                \draw[-,thick] (0,0.5)to[out=up,in=down](0,1.2);
    \end{tikzpicture})$, for all   $ f\in B_L, g\in  B_R$ and  $a\in B $.\end{itemize}
Then there is a $(B,B)$-isomorphism  $B_L\cong B_R$.
\end{Lemma}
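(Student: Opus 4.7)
The plan is to construct explicit mutually inverse bimodule maps $\phi\colon B_L\to B_R$ and $\psi\colon B_R\to B_L$ that bend the extra strand from the top-left to the bottom-right by means of the cap and cup morphisms $A=\lcap\colon \ob 2\to\ob 0$ and $U=\lcup\colon\ob 0\to\ob 2$ of $\B(\delta_0)$. Explicitly, for $f\in 1_{\ob m}B_L1_{\ob n}=1_{\ob{m+1}}B 1_{\ob n}$, regarded as a morphism $\ob n\to\ob{m+1}$, set
\[
\phi(f) := (1_{\ob m}\otimes A)\circ (f\otimes 1_{\ob 1}) \in 1_{\ob m}B 1_{\ob{n+1}} = 1_{\ob m}B_R1_{\ob n},
\]
and symmetrically, for $g\in 1_{\ob m}B_R1_{\ob n}=1_{\ob m}B 1_{\ob{n+1}}$, set
\[
\psi(g) := (g\otimes 1_{\ob 1})\circ(1_{\ob n}\otimes U) \in 1_{\ob{m+1}}B 1_{\ob n} = 1_{\ob m}B_L1_{\ob n}.
\]

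First I would prove $\psi\circ\phi=\mathrm{id}_{B_L}$. Using the interchange law to expand
\[
\psi(\phi(f))=(1_{\ob m}\otimes A\otimes 1_{\ob 1})\circ (f\otimes 1_{\ob 2})\circ(1_{\ob n}\otimes U),
\]
and again to push $f$ past the cup on the right, this equals $\bigl((1_{\ob m}\otimes A\otimes 1_{\ob 1})\circ(1_{\ob{m+1}}\otimes U)\bigr)\circ f$. The bracketed expression involves only a cup and cap attached to the rightmost strand, and is precisely the left-hand snake in the first identity of \eqref{B2}, hence equals $1_{\ob{m+1}}$. Thus $\psi(\phi(f))=f$. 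The identity $\phi(\psi(g))=g$ follows symmetrically, this time by invoking the other snake in \eqref{B2}.

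Next I would verify bimodule compatibility. For the left action, $a\cdot f=(a\otimes 1_{\ob 1})\circ f$ in $B_L$ while $a\cdot \phi(f)=a\circ\phi(f)$ in $B_R$, and a direct interchange-law computation gives
\[
(1_{\ob k}\otimes A)\circ\bigl((a\otimes 1_{\ob 1})\otimes 1_{\ob 1}\bigr)=a\otimes A=a\circ(1_{\ob m}\otimes A),
\]
so $\phi(a\cdot f)=a\cdot\phi(f)$. The right action is handled symmetrically: on $B_L$ it is ordinary composition, while on $B_R$ it is $\phi(f)\cdot a=\phi(f)\circ(a\otimes 1_{\ob 1})$; the interchange law immediately yields $\phi(f\circ a)=\phi(f)\circ (a\otimes 1_{\ob 1})$.

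I do not anticipate a genuine obstacle here: the content of the lemma is categorical duality in $\B(\delta_0)$, and the snake identities \eqref{B2} together with the interchange law do all the work. The only real care needed is bookkeeping --- keeping straight which copy of $1_{\ob 1}$ represents the bent strand in each of $B_L$ and $B_R$, and which two strands each cap or cup connects --- which is what turns the bimodule axioms into equalities of pictures that can be read off from \eqref{B2} and the interchange law.
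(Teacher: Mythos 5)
Your maps $\phi$ and $\psi$ are exactly the isomorphism and inverse used in the paper (bending the extra strand with $A$ and $U$ and invoking the snake relations \eqref{B2}), and your verification of the inverse and bimodule properties via the interchange law is correct. This is essentially the same proof, just with the routine checks the paper leaves implicit written out.
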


\begin{proof} The required isomorphism is $\phi: B_L\rightarrow B_R$ such that  $\phi (f)= (1_{\ob m} ~ \lcap)\circ (f~\begin{tikzpicture}[baseline = 10pt, scale=0.5, color=\clr]
                \draw[-,thick] (0,0.5)to[out=up,in=down](0,1.2);
    \end{tikzpicture})$, for any $f\in 1_{\ob {m+1}}B$. Thanks to  \eqref{B2}, its inverse is
     $\phi^{-1}: B_R\rightarrow B_L$   such that $\phi^{-1}(g) =(g~\begin{tikzpicture}[baseline = 10pt, scale=0.5, color=\clr]
                \draw[-,thick] (0,0.5)to[out=up,in=down](0,1.2);
    \end{tikzpicture})\circ(1_{\ob m} \lcup) $, for any $g\in B 1_{\ob {m+1}}$.
\end{proof}

\begin{Lemma}\label{sksjdd} Let $ \tilde E:= B_L\otimes _{B}?=B_R\otimes _B?$.
Then  $\tilde E$ is self-adjoint and exact.
\end{Lemma}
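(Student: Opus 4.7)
The plan is to establish exactness and self-adjointness separately.

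For exactness, I decompose $B_L$ as a right $B$-module according to its weight spaces: since $1_{\ob m} B_L 1_{\ob n} = 1_{\ob{m+1}} B 1_{\ob n}$ by definition, we have $B_L = \bigoplus_{m \in \mathbb N} 1_{\ob{m+1}} B$ as right $B$-modules. Each summand $1_{\ob{m+1}} B$ is a direct summand of the regular right $B$-module via the idempotent $1_{\ob{m+1}}$, hence projective. Thus $B_L$ is projective, in particular flat, as a right $B$-module, and so $\tilde E = B_L \otimes_B ?$ is exact on $B\text{-mod}$. (The isomorphism $B_L\cong B_R$ in Lemma~\ref{bimoudisomah} makes the two descriptions of $\tilde E$ consistent.)

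For self-adjointness I will exhibit $\tilde E$ as left adjoint to itself by constructing unit and counit natural transformations and checking the zigzag identities. Unpacking the definitions one finds $1_{\ob m}\tilde E V \cong 1_{\ob{m+1}} V$ and hence $1_{\ob m}\tilde E \tilde E V \cong 1_{\ob{m+2}} V$. Define $\eta\colon \id_{B\text{-mod}} \to \tilde E \circ \tilde E$ with component on $1_{\ob m} V$ given by left multiplication by $1_{\ob m} \otimes \lcup \in 1_{\ob{m+2}} B 1_{\ob m}$ (i.e., placing the cup on the rightmost two strands), and dually $\epsilon\colon \tilde E \circ \tilde E \to \id_{B\text{-mod}}$ with component on $1_{\ob{m+2}} V$ given by left multiplication by $1_{\ob m} \otimes \lcap \in 1_{\ob m} B 1_{\ob{m+2}}$. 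Both $\eta_V$ and $\epsilon_V$ are $B$-module homomorphisms, natural in $V$, because the $B$-action on $\tilde E^k V$ corresponds to left multiplication on the ambient $V$ by morphisms of the form $a \otimes 1_{\ob k}$, which commute with a cup or cap placed on the rightmost strands by the interchange law.

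Finally, the zigzag identities $(\epsilon \tilde E) \circ (\tilde E \eta) = \id_{\tilde E}$ and $(\tilde E \epsilon) \circ (\eta \tilde E) = \id_{\tilde E}$ translate on each weight space to left multiplication by a morphism obtained by composing tensor products of the cup and cap with identity strands; by the two snake relations of \eqref{B2} in the Brauer category these compositions collapse to the identity morphism $1_{\ob{m+1}}$. This gives the adjunction $(\tilde E, \tilde E)$, proving $\tilde E$ is self-adjoint. The main technical obstacle is the bookkeeping required to see that the categorical zigzag identities on $\tilde E$ unpack to precisely the snake relations at the diagrammatic level; this is routine but must be done carefully so that the cup, cap, and the extra identity strands encoding the repeated application of $\tilde E$ sit in the correct positions before \eqref{B2} is applied.
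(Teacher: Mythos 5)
Your proof is correct and follows essentially the same route as the paper: both establish self-adjointness by writing down an explicit unit and counit built from the cup and cap acting on the rightmost strands and checking the zigzag identities via the snake relations \eqref{B2}, the only cosmetic difference being that you phrase the data through the identifications $1_{\ob m}\tilde E V\cong 1_{\ob{m+1}}V$ while the paper writes the corresponding bimodule homomorphisms $B\to B_R\otimes_B B_R$ and $B_R\otimes_B B_R\to B$. Your separate exactness argument (projectivity, hence flatness, of $B_L=\bigoplus_m 1_{\ob{m+1}}B$ as a right $B$-module) is a small, valid variation; the paper instead deduces exactness directly from self-adjointness.
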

\begin{proof}
The unit $\beta:\text{Id}_{B\text{-mod}}\rightarrow\tilde E^2$ is  the bimodule homomorphism
$B\rightarrow B_R\otimes_B B_R$ sending $ f$ to $( 1_{\ob m}~\lcap)\otimes(f~\begin{tikzpicture}[baseline = 10pt, scale=0.5, color=\clr]
                \draw[-,thick] (0,0.5)to[out=up,in=down](0,1.2);
    \end{tikzpicture}~)$ for all  $f\in  1_{\ob m}B$, and
  the counit $\alpha:\tilde E ^2\rightarrow \text{Id}_{B\text{-mod}}$ is   the bimodule homomorphism
$B_R\otimes _B B_R\rightarrow B$ sending $ f\otimes g $ to  $f \circ( g~\begin{tikzpicture}[baseline = 10pt, scale=0.5, color=\clr]
                \draw[-,thick] (0,0.5)to[out=up,in=down](0,1.2);
    \end{tikzpicture})~\circ(1_{\ob m}\lcup)$, for all $ f\in B 1_{\ob {n+1}}$, and $g\in 1_{\ob n}B 1_{\ob {m+1}}$.
Using   \eqref{B2},   one can check that $  \alpha\tilde E  \circ  \tilde E   \beta =\text{Id}_{\tilde E}= \tilde E  \alpha \circ \beta  \tilde E $.
So, $\tilde E$ is self-adjoint and exact.

 \end{proof}
  Using arguments similar to those  in \cite[Theorem~IV.2.1]{Re} yields the following result. See also \cite[Lemma~6.7]{Br}. We give a  proof here since it is  crucial for the categorical action later on.
\begin{Lemma}\label{intailh}
There is a short exact sequence of functors from $B^0$-mod to $B$-mod:
\begin{equation}\label{diejdhd}
0\rightarrow \Delta\circ E\rightarrow \tilde E\circ \Delta\rightarrow\Delta\circ  F\rightarrow0.
\end{equation}
\end{Lemma}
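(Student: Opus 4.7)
The plan is to construct the short exact sequence directly via the triangular decomposition of Lemma~\ref{triangularde}. First, using the identifications $\tilde E(M) = B_L \otimes_B M$ and $\Delta(V) = B \otimes_{B^\sharp} V$, I would rewrite the three functors as
\begin{equation*}
\tilde E \circ \Delta(V) = B_L \otimes_{B^\sharp} V, \quad \Delta \circ E(V) = B \otimes_{B^\sharp} B^0_L \otimes_{B^0} V, \quad \Delta \circ F(V) = B \otimes_{B^\sharp} B^0_R \otimes_{B^0} V.
\end{equation*}
Using $B = B^- \otimes_{\mathbb K} B^\sharp$ as right $B^\sharp$-modules, the $\ob{n}$-weight space of $\tilde E \circ \Delta(V)$ becomes $1_{\ob{n+1}} B^- \otimes_{\mathbb K} V$, while $\Delta \circ E(V)$ and $\Delta \circ F(V)$ at the same weight become $1_{\ob{n}} B^- \otimes_{\mathbb K} E(V)$ and $1_{\ob{n}} B^- \otimes_{\mathbb K} F(V)$.

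Second, I would define the injection $\iota : \Delta \circ E(V) \hookrightarrow \tilde E \circ \Delta(V)$ by tensoring a vertical strand on the right. Concretely, for $d \otimes v$ with $d \in 1_{\ob{n}} B^- 1_{\ob{m}}$ and $v \in 1_{\ob{m}} E(V)$ (identified with $1_{\ob{m+1}} V$ via the vector-space isomorphism coming from the definition of $E$), I set $\iota(d \otimes v) = (d \otimes 1_{\ob{1}}) \otimes v$. The $B$-equivariance of $\iota$ follows from the interchange law $(a \circ d) \otimes 1_{\ob{1}} = (a \otimes 1_{\ob{1}}) \circ (d \otimes 1_{\ob{1}})$ together with compatibility of the $B^\sharp$-straightening used to describe the $B$-actions on $\Delta \circ E(V)$ and $\tilde E \circ \Delta(V)$.

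Third, to build the surjection $\pi : \tilde E \circ \Delta(V) \twoheadrightarrow \Delta \circ F(V)$, I would split each basis diagram $d' \in 1_{\ob{n+1}} B^- 1_{\ob{m'}}$ according to the fate of the rightmost top endpoint $n+1$. By non-crossing, either (b) it is the top of a vertical strand going to the rightmost bottom endpoint, or (a) it is cupped to some top endpoint $j$ with $j \leq n$, which forces positions $j+1, \dots, n$ to be paired among themselves. The image of $\iota$ is exactly the span of case-(b) diagrams tensored with $V$. I would declare $\pi$ to vanish on case-(b) and to send a case-(a) diagram $d' \otimes v$ to the element of $1_{\ob{n}} B^- \otimes_{\mathbb K} F(V)$ obtained by extracting the sub-diagram on positions $1, \dots, j-1$ at the top (together with the $m'$ bottom endpoints) as an element of $1_{\ob{j-1}} B^- 1_{\ob{m'}}$ and absorbing the non-crossing cup structure on $\{j, j+1, \dots, n+1\}$ into the induction bimodule $B^0_R$ entering $F(V)$.

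The main obstacle is verifying that $\pi$ is a well-defined $B$-module homomorphism. When $a \in B$ acts on $\tilde E \circ \Delta(V)$ via $a \otimes 1_{\ob{1}}$, the straightening of $(a \otimes 1_{\ob{1}}) \circ d'$ through the triangular decomposition can produce both case-(a) and case-(b) contributions, and one must check that the case-(b) contributions vanish modulo $\iota(\Delta \circ E(V))$ so that $\pi$ descends to the quotient. Once this $B$-equivariance is in place, exactness of the sequence is immediate: injectivity of $\iota$ and the relation $\pi \circ \iota = 0$ are built into the construction, surjectivity of $\pi$ follows from the induction-bimodule description of $F$ since every generator of $\Delta \circ F(V)$ admits a lift to a case-(a) diagram, and the identification $\ker \pi = \operatorname{im} \iota$ is read off the splitting $1_{\ob{n+1}} B^- = (\text{case (a)}) \oplus (\text{case (b)})$.
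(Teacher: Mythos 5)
Your first half is sound and matches the paper: the embedding $\Delta\circ E\hookrightarrow\tilde E\circ\Delta$ given by tensoring a vertical strand on the right is exactly the map $\varphi$ used in the paper, and its image is indeed spanned by the diagrams whose rightmost top vertex lies on the rightmost vertical strand. The gap is in the construction of the surjection onto $\Delta\circ F$. Your case-(a) analysis rests on a false structural claim: in $B^-$ only crossings \emph{among vertical strands} are excluded, so if the top point $n+1$ is cupped to a point $j\le n$, the points $j+1,\dots,n$ need not be paired among themselves — cups may cross vertical strands and each other. Already for $m'=1$, $n+1=3$, the diagram with a vertical strand from bottom $1$ to top $2$ and a cup joining tops $1$ and $3$ violates your claim, and your recipe ("extract the sub-diagram on top positions $1,\dots,j-1$ and absorb the cup structure on $\{j,\dots,n+1\}$ into $B^0_R$") does not apply to it; note also that $B^0_R$ is spanned by permutation diagrams, so there is no "cup structure" it can absorb. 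What actually has to happen, and what your recipe misses, is that closing the cup $(j,n+1)$ with a cap against a new rightmost strand produces a vertical strand from the new bottom point to top $j$ which crosses the other vertical strands; straightening it through the triangular decomposition produces an element of $B^-$ times a nontrivial permutation, and it is precisely this permutation, pushed into the induced module, that makes the map hit all of $F(V)=\bigoplus_m\mathrm{ind}^{m+1}_m$ rather than only the subspace $1\otimes V$. Finally, the one point you yourself identify as "the main obstacle" — that your diagram-by-diagram prescription is a $B$-module homomorphism — is exactly the content that is never verified, so the proposed argument is incomplete at its crux.

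The paper avoids all of this by working at the level of $(B,B^0)$-bimodules: it writes the sequence as $0\to B\otimes_{B^\sharp}B^0_L\to B_L\otimes_{B^\sharp}B^0\to B\otimes_{B^\sharp}B^0_R\to 0$ with the second map $\psi(f\otimes g)$ given by an explicit closed formula (tensor a strand on the right of $f$, cap the two rightmost top points, and tensor a strand on the right of $g$), for which bimodule-equivariance is immediate; $\psi$ kills the image of $\varphi$ because the resulting bottom cap slides across $\otimes_{B^\sharp}$ and acts as zero. Exactness is then proved by matching explicit bases, with the inverse of $\psi$ on case-(a) basis elements written in terms of the permutations $S_{i,m}$ — the very symmetric-group contribution your construction drops. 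If you replace your ad hoc $\pi$ by this bimodule map (or, equivalently, prove equivariance of your recipe after correcting it to record the straightening permutation), the rest of your exactness argument goes through.
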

\begin{proof}
It suffices to show the following is a short exact sequence of $(B,B^0)$-bimodules
\begin{equation}\label{shoretmor}
0\rightarrow B\otimes _{B^\sharp}B^0_L\overset {\varphi}\rightarrow B_L\otimes _{B^\sharp}B^0\overset{\psi}\rightarrow B\otimes _{B^\sharp}B^0_R\rightarrow0,
\end{equation}
where $\varphi(f\otimes g)= ( f~\begin{tikzpicture}[baseline = 10pt, scale=0.5, color=\clr]
                \draw[-,thick] (0,0.5)to[out=up,in=down](0,1.2);
    \end{tikzpicture})\otimes g$ and $\psi(f\otimes g)=((1_{\ob m}~\lcap)\circ (f~\begin{tikzpicture}[baseline = 10pt, scale=0.5, color=\clr]
                \draw[-,thick] (0,0.5)to[out=up,in=down](0,1.2);
    \end{tikzpicture}))\otimes ( g~\begin{tikzpicture}[baseline = 10pt, scale=0.5, color=\clr]
                \draw[-,thick] (0,0.5)to[out=up,in=down](0,1.2);
    \end{tikzpicture})$ for all admissible Brauer diagrams $f$ and $g$.

 First of all,  it is routine  to check that both  $\varphi $ and $\psi$  are well-defined $(B,B^0)$-homomorphisms. Further,  we have
 $$\psi\varphi(f\otimes g)=(f\lcap)\otimes( g~\begin{tikzpicture}[baseline = 10pt, scale=0.5, color=\clr]
                \draw[-,thick] (0,0.5)to[out=up,in=down](0,1.2);
    \end{tikzpicture})=0, \text{ for  $f\in B,g\in B^0_L$},  $$
  where the last equality   follows from the fact that the cap commutes past the tensor and acts on $B^0_R$ as zero.

  It remains to show the exactness of the sequence. Motivated by the argument in the proof of \cite[Lemma~6.7]{Br}, we consider certain bases of three bimodules as above. Note that $1_{\ob s}B1_{\ob m}=\Hom_{\B(\delta_0)}(\ob m,\ob s)$, which  has basis given by  $\mathbb B_{m,s}/\sim$, the set of equivalence classes of all $(m,s)$-Brauer diagrams. For all $m,s,t\in\mathbb N$, define   $\mathbb B^-_{m,m+2s}=\mathbb B_{m,m+2s}\cap B^-$ and $\mathbb B_{t}^0=\mathbb B_{t,t}\cap B^0$.
  Then $1_{\ob m+2s}B^-1_{\ob m}$ (resp., $1_\ob tB^0 1_{\ob t}$) has  basis  given by  $\mathbb B^-_{m,m+2s}/\sim$ (resp., $\mathbb B_{t}^0/\sim$).
  Since $B^-=\oplus_{m,s\in\mathbb N}1_{\ob m+2s}B^-1_{\ob m} $ and $B^0=\oplus _{t\in \mathbb N} 1_\ob tB^0 1_{\ob t}$, by Lemma~\ref{triangularde}, $J, H$ and $L$ are bases of $ B\otimes _{B^\sharp}B^0_L$, $ B_L\otimes _{B^\sharp}B^0$  and $ B\otimes _{B^\sharp}B^0_R$, respectively, where
  $$\begin{aligned}
  J&=\{f\otimes g\mid f\in \mathbb B^-_{m,m+2s}/\sim, g\in\mathbb B_{m+1}^0/\sim, \forall m,s\in \mathbb N\},\\
  H&=\{f\otimes g\mid f\in \mathbb B^-_{m,m+2s}/\sim, g\in\mathbb B_{m-1}^0/\sim, \forall m,s\in \mathbb N, \text{ and } m\geq 1\},\\
  L&=\{f\otimes g\mid f\in \mathbb B^-_{m,m+2s}/\sim, g\in\mathbb B_{m}^0/\sim,  \forall m,s\in \mathbb N, m\geq 1 \}.
  \end{aligned}$$

 Let $H_1$ be the subset of $H$ consists of all $f\otimes g\in H$ such that the rightmost  vertex on the top row  of $f$ is on a cup. Then
  $H=H_1\overset {.} \sqcup H_2$. Therefore,  the rightmost vertex on the top row of $f$ is on the rightmost vertical string if  $f\otimes g\in H_2$. So $f=\bar f ~ \begin{tikzpicture}[baseline = 10pt, scale=0.5, color=\clr]
                \draw[-,thick] (0,0.5)to[out=up,in=down](0,1.2);
    \end{tikzpicture}$ and $\bar f\otimes g\in J$ for any $f\otimes g\in H_2$, where $\bar f$ is obtained from $f$ by deleting the rightmost vertical string.
 By definition we see that $\varphi$ maps $J$ to $H_2$ and $\psi$ maps $H_1$ to $L$.
  Moreover, $\varphi: J\rightarrow H_2$ has an  inverse map given by $\varphi^{-1}: H_2\rightarrow J, f\otimes g\mapsto \bar f\otimes g$.

We want to show that $\psi:H_1\rightarrow L$ has an inverse map $\psi^{-1}$, too.
 Given any  $g\in\mathbb B_{m}^0 $ with $m\geq 1$ such that the $i$th vertex on the top row is connected with the rightmost  vertex on the bottom row, we
   define $$S_{i,m}=\left\{
                     \begin{array}{ll}
                       S_{i}\circ S_{i+1}\circ \cdots \circ S_{m-1}, & \hbox{if $i<m$;} \\
                       1_{\ob m}, & \hbox{if $i=m$}
                     \end{array}
                   \right.$$
 where $S_i$ is given in \eqref{jdijdijxs}. Then there is some $\tilde g\in \mathbb B^0_{m-1} $ such that $S_{i,m}^{-1}\circ g= \tilde g~\begin{tikzpicture}[baseline = 10pt, scale=0.5, color=\clr]
                \draw[-,thick] (0,0.5)to[out=up,in=down](0,1.2);
    \end{tikzpicture}$.   Moreover,
$$ (f~\begin{tikzpicture}[baseline = 10pt, scale=0.5, color=\clr]
                \draw[-,thick] (0,0.5)to[out=up,in=down](0,1.2);
    \end{tikzpicture})\circ (S_{i,m}~\begin{tikzpicture}[baseline = 10pt, scale=0.5, color=\clr]
                \draw[-,thick] (0,0.5)to[out=up,in=down](0,1.2);
    \end{tikzpicture}) \circ (1_{\ob {m-1}\lcup})\in \mathbb B^-_{m+1,m+2s+1} \text{ for any } f\in \mathbb B^-_{m,m+2s}.$$
 Hence  the required inverse of $\psi$ is
 $$\psi^{-1}: L\rightarrow H_1 , f\otimes g \mapsto ((f~\begin{tikzpicture}[baseline = 10pt, scale=0.5, color=\clr]
                \draw[-,thick] (0,0.5)to[out=up,in=down](0,1.2);
    \end{tikzpicture})\circ (S_{i,m}~\begin{tikzpicture}[baseline = 10pt, scale=0.5, color=\clr]
                \draw[-,thick] (0,0.5)to[out=up,in=down](0,1.2);
    \end{tikzpicture}) \circ (1_{\ob {m-1}\lcup})) \otimes \tilde g, \text{ for any} f\in \mathbb B^-_{m,m+2s}, g\in \mathbb B_{m}^0.$$
 We have shown that $\varphi $ maps $J$ bijectively onto $H_2$ and $\psi$ maps $H_1$ bijectively onto $L$. This completes the proof.
\end{proof}
 Let $\bar {(k,l)}\in \End_{\B(\delta_0)}(\ob m)$ be the morphism such that its diagram is the cap-cup joining the $k$th and $l$th point.
 For example,  $$\bar{(i,i+1)}=\begin{tikzpicture}[baseline = 25pt, scale=0.35, color=\clr]
\draw[-,thick](-2,1.1)to[out= down,in=up](-2,3.9);
         \draw(-1.5,1.1) node{$ \cdots$}; \draw(-1.5,3.9) node{$ \cdots$};
         \draw[-,thick](-0.5,1.1)to[out= down,in=up](-0.5,3.9);
        \draw[-,thick] (0,4) to[out=down,in=left] (0.5,3.5) to[out=right,in=down] (1,4);
         \draw[-,thick] (0,1) to[out=up,in=left] (0.5,1.5) to[out=right,in=up] (1,1);
         \draw(0,4.5)node{\tiny$i$}; \draw(1.3,4.5)node{\tiny$i+1$};
         \draw[-,thick](2,1.1)to[out= down,in=up](2,3.9);
         \draw(3.5,1.1) node{$ \cdots$}; \draw(3.5,3.9) node{$ \cdots$};
         \draw[-,thick](4.5,1.1)to[out= down,in=up](4.5,3.9);
           \end{tikzpicture}.$$
    For $1\leq l\leq m$, define
 \begin{equation}\label{jucyofb}
 L_1^m=0,\quad L_l^{ m}:= \sum_{1\leq k<l}(k,l)-\sum_{1\leq k<l}\bar{(k,l)}\in \End_{\B(\delta_0)}(\ob m) \text{ for $l>1$}.
 \end{equation}
 As stated in section 2, there is a functor $\mathcal F: \AB(\delta)\rightarrow \B(\delta_0)$
  sending $X_1  1_{\ob m}$ to $\frac{\delta_0-1}{2}1_{\ob m}$ for $m\in\mathbb N$ and fixing other generators $A, U$ and $S$. Therefore, we can interpret dotted Brauer diagrams (e.g. the dotted Brauer diagrams in  \eqref{jdijdijxs} and \eqref{AB1}--\eqref{AB2})  as morphisms in $\B(\delta_0)$ later on.

 \begin{Lemma}\label{ejihehudh} For any admissible $i$,
 $X_i 1_{\ob m}= \frac{\delta_0-1}{2}1_{\ob m} + L_i^m$.
\end{Lemma}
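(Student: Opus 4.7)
The plan is to induct on $i$. For the base case $i = 1$, we have $L_1^m = 0$ by definition in \eqref{jucyofb}, so the claim reduces to $X_1 1_{\ob m} = \frac{\delta_0-1}{2} 1_{\ob m}$, which is exactly the defining behavior of the quotient functor $\mathcal F: \AB(\delta) \to \B(\delta_0)$ on the generator $X_1 1_{\ob m}$.

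For the inductive step, the first task is to derive the recursion
\begin{equation*}
X_{i+1}\, 1_{\ob m} \;=\; S_i\, X_i\, 1_{\ob m}\, S_i \;-\; \bar{(i,i+1)} \;+\; S_i
\qquad\text{in } \End_{\B(\delta_0)}(\ob m).
\end{equation*}
This follows by tensoring relation \eqref{AB1} into positions $i, i+1$ of $\ob m$ to obtain $X_i S_i - S_i X_{i+1} = \bar{(i,i+1)} - 1_{\ob m}$, then left-multiplying by $S_i$, using $S_i^2 = 1_{\ob m}$ from \eqref{B1} together with the absorption identity $S_i \circ \bar{(i,i+1)} = \bar{(i,i+1)}$. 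This absorption is immediate from Theorem~\ref{basisofb}, since both sides are Brauer diagrams inducing the same pairing of $\{1,\ldots,2m\}$: a crossing placed directly above a cap at positions $i,i+1$ does not change which pair of top endpoints is joined. Substituting the inductive hypothesis yields
\begin{equation*}
X_{i+1}\, 1_{\ob m} \;=\; \tfrac{\delta_0-1}{2}\, 1_{\ob m} \;+\; S_i L_i^m S_i \;+\; S_i \;-\; \bar{(i,i+1)},
\end{equation*}
so it remains to verify $S_i L_i^m S_i + (i,i+1) - \bar{(i,i+1)} = L_{i+1}^m$.

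For the final step, I would use the conjugation identities $S_i (k,i) S_i = (k, i+1)$ (standard in the symmetric group) and $S_i \bar{(k,i)} S_i = \bar{(k, i+1)}$ for all $k < i$, the latter once again by Theorem~\ref{basisofb}: conjugation by $(i,i+1)$ on the underlying pairing shifts the endpoint label $i$ to $i+1$ on both rows and leaves every other pair intact. Summing over $k < i$ gives $S_i L_i^m S_i = \sum_{k<i}(k,i+1) - \sum_{k<i}\bar{(k,i+1)}$, and the leftover $(i,i+1) - \bar{(i,i+1)}$ supplies exactly the missing $k = i$ terms of $L_{i+1}^m$. The only step that requires a little care is the diagrammatic identity $S_i \bar{(k,i)} S_i = \bar{(k,i+1)}$; although transparent by string-diagram isotopy, the cleanest rigorous justification is through the basis theorem, and this is the main (mild) obstacle. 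Everything else is bookkeeping.
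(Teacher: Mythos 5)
Your proof is correct and follows essentially the same route as the paper: the base case via the quotient functor $\mathcal F$, the recursion $X_{i+1}1_{\ob m}=S_iX_i1_{\ob m}S_i+S_i-\bar{(i,i+1)}$ extracted from \eqref{AB1}, and induction on $i$. The paper leaves the derivation of this recursion and the final conjugation bookkeeping implicit ("the result follows from \eqref{cnceced} and induction"), whereas you spell both out, justifying the absorption and conjugation identities such as $S_i\bar{(k,i)}S_i=\bar{(k,i+1)}$ via Theorem~\ref{basisofb}, which is a valid (indeed the intended) justification.
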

\begin{proof} For $i=1$ the result follows since the quotient functor $\mathcal F: \AB(\delta)\rightarrow \B(\delta_0)$ sends $X_11_{\ob m}$ to $\frac{\delta_0-1}{2} 1_{\ob m}$. In general, using \eqref{AB1} we have that
 \begin{equation}\label{cnceced}
\text{$X_{i+1}1_{\ob m}= S_i1_{\ob m}\circ  X_i1_{\ob m} \circ S_i 1_{\ob m} + S_i1_{\ob m} - \bar{(i,i+1)}$ in $\AB$.}
\end{equation}
Then  the result follows from \eqref{cnceced} and induction on $i$.
\end{proof}

%

\begin{Lemma}\label{msikdjde}Let $\tilde X: B_L\rightarrow B_L$ be the linear homomorphism such that the restriction of $\tilde X$  to $ 1_{\ob {m+1}}B$ is given by  the left multiplication of $X_{m+1}1_{\ob{m+1}}$.  Recall $\varphi$ and $\psi$  in \eqref {shoretmor}. Then
\begin{enumerate} \item[(1)]  $\tilde X$ is a $(B,B)$-bimodule homomorphism, \item[(2)] $\varphi \circ (\text{id}\otimes X^0_L)=(\tilde X\otimes \text{id})\circ \varphi$,
 \item [(3)] $\psi\circ (\tilde X\otimes \text{id})=-(\text{id}\otimes X^0_R )\circ \psi$.\end{enumerate}
\end{Lemma}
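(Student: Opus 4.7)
The plan is to reduce each part to an interplay among three ingredients: the interchange law in $\AB$; the expansion of $X_i 1_{\ob m}$ from Lemma~\ref{ejihehudh} into a permutation part and a cap-cup part; and the fact that any Brauer diagram containing a cap lies in the kernel of the quotient $B^\sharp\twoheadrightarrow B^0$, hence dies whenever it is moved across a tensor product over $B^\sharp$. For (1), right $B$-linearity of $\tilde X$ is immediate from associativity. For left $B$-linearity, I would use the interchange law in $\AB$ to obtain
\[
X_{m+1}1_{\ob{m+1}}\circ (a\otimes 1_{\ob 1})=(1_{\ob m}\otimes \xdot)\circ (a\otimes 1_{\ob 1})=a\otimes\xdot=(a\otimes 1_{\ob 1})\circ X_{n+1}1_{\ob{n+1}}
\]
for every $a\in 1_{\ob m}B 1_{\ob n}$; since $\mathcal F\colon \AB\to \B(\delta_0)$ is a functor this identity descends to $\B(\delta_0)$, giving $\tilde X(a\cdot f)=a\cdot \tilde X(f)$.

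For (2), evaluate both sides on a simple tensor $f\otimes g$. The interchange argument from (1) rewrites the right-hand side as $((f~\text{id})\circ X_{n+1}1_{\ob{n+1}})\otimes g$. I would then apply Lemma~\ref{ejihehudh} to split $X_{n+1}1_{\ob{n+1}}=P+Q$, where $P=\tfrac{\delta_0-1}{2}1_{\ob{n+1}}+L_{n+1}\in K\Sym_{n+1}\subset B^\sharp$ and $Q=-\sum_{k<n+1}\bar{(k,n+1)}$. The element $P$ has no caps, so it slides freely across $\otimes_{B^\sharp}$; its image in $B^0$ under the quotient is itself, and its left action on $g$ coincides with $X^0_L g$ by Lemma~\ref{bim123}, producing the left-hand side. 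Each term of $Q$ factors as $\bar{(k,n+1)}=U_{k,n+1}\circ A_{k,n+1}$, where $A_{k,n+1}\colon \ob{n+1}\to\ob{n-1}$ is the pure cap diagram and so belongs to $B^\sharp$; sliding $A_{k,n+1}$ across the tensor kills the contribution because its image in $B^0$ is zero.

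For (3), the crucial preliminary is a sign-change ``dot slides through the cap'' identity
\[
(1_{\ob m}~\lcap)\circ X_{m+1}1_{\ob{m+2}}=-(1_{\ob m}~\lcap)\circ X_{m+2}1_{\ob{m+2}},
\]
which I would verify by expanding both sides via Lemma~\ref{ejihehudh} and matching Brauer diagrams term by term: a careful check shows that $(1_{\ob m}~\lcap)\circ (k,m+1)=(1_{\ob m}~\lcap)\circ \bar{(k,m+2)}$ and $(1_{\ob m}~\lcap)\circ(k,m+2)=(1_{\ob m}~\lcap)\circ\bar{(k,m+1)}$ as morphisms in $\B(\delta_0)$ for each $1\le k\le m$, while the scalar contributions together with the $k=m+1$ term of $L_{m+2}^{m+2}$ (using $(1_{\ob m}~\lcap)\circ S_{m+1}=(1_{\ob m}~\lcap)$ and $(1_{\ob m}~\lcap)\circ\bar{(m+1,m+2)}=\delta_0(1_{\ob m}~\lcap)$) cancel, so the two sides differ by a minus sign. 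With this flip in hand, apply it to the LHS, then invoke the interchange identity $X_{m+2}1_{\ob{m+2}}\circ(f~\text{id})=(f~\text{id})\circ X_{n+1}1_{\ob{n+1}}$ and run the same decomposition as in (2): the permutation part $P$ commutes with $(g~\text{id})$ (because $L_{n+1}$ centralizes the subgroup of $\Sym_{n+1}$ fixing position $n+1$), so after sliding across the tensor it becomes $X^0_R(g~\text{id})$ by Lemma~\ref{bim123}; the cap-cup terms vanish for the same reason as in (2). The overall minus sign in the target formula comes directly from the ``dot-through-cap'' identity.

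The main technical obstacle is verifying the sign-change identity in (3); once that is in place, the rest amounts to routine bookkeeping about the interchange law and about which $B^\sharp$-elements slide across $\otimes_{B^\sharp}$.
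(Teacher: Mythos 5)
Your proposal is correct, and for parts (1) and (2) it is essentially the paper's argument: interchange to slide the dot down the extra strand, then move $X$ across $\otimes_{B^\sharp}$ using the decomposition of Lemma~\ref{ejihehudh}, with the cap--cup terms $\bar{(k,l)}$ dying because their cap factor lies in $B^\sharp[1]$ and maps to zero in $B^0$ (the paper records this as $\bar{(k,l)}\otimes_{B^\sharp}b=0$ and as \eqref{sssmndwuhdu}); including the scalar $\tfrac{\delta_0-1}{2}$ in your ``$P$'' is in fact slightly cleaner bookkeeping than the paper's. The only real divergence is in (3): the paper obtains the minus sign from \eqref{changeminus}, quoted from \cite[Lemma~3.1]{RS3} as an identity already valid in $\AB$, whereas you re-derive the equivalent statement $(1_{\ob m}~\lcap)\circ X_{m+1}1_{\ob{m+2}}=-(1_{\ob m}~\lcap)\circ X_{m+2}1_{\ob{m+2}}$ inside $\B(\delta_0)$ by expanding both $X$'s via Lemma~\ref{ejihehudh} and matching Brauer diagrams through Theorem~\ref{basisofb}. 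I checked this verification: the pairings of $(1_{\ob m}~\lcap)\circ(k,m+1)$ and $(1_{\ob m}~\lcap)\circ\bar{(k,m+2)}$ (and the symmetric pair) do coincide with no closed loops created, and the remaining terms cancel via $(1_{\ob m}~\lcap)\circ(m+1,m+2)=(1_{\ob m}~\lcap)$, $(1_{\ob m}~\lcap)\circ\bar{(m+1,m+2)}=\delta_0(1_{\ob m}~\lcap)$ and the scalar count $(\delta_0-1)+1-\delta_0=0$. So your route buys a self-contained proof of the sign rule at the cost of extra diagram bookkeeping, while the paper's citation is shorter and makes clear the identity already holds at the level of the affine Brauer category; the rest of your step (3), sliding the dot to the bottom and pushing $X_{n+1}$ across the tensor so that the surviving symmetric-group part commutes past $g~\begin{tikzpicture}[baseline = 10pt, scale=0.5, color=\clr]\draw[-,thick] (0,0.5)to[out=up,in=down](0,1.2);\end{tikzpicture}$ and becomes $X^0_R$, is exactly the paper's computation.
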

\begin{proof} For any Brauer diagram $g\in \Hom_{\B(\delta_0)}(\ob m,\ob n)$, we have
$X_{n+1}1_{\ob {n+1}}\circ (g~\begin{tikzpicture}[baseline = 10pt, scale=0.5, color=\clr]
                \draw[-,thick] (0,0.5)to[out=up,in=down](0,1.2);
    \end{tikzpicture})=g~\xdot=(g~\begin{tikzpicture}[baseline = 10pt, scale=0.5, color=\clr]
                \draw[-,thick] (0,0.5)to[out=up,in=down](0,1.2);
    \end{tikzpicture}) \circ X_{m+1}1_{\ob {m+1}}$. This implies
(1).
Note that $\bar{(k,l)}\otimes _{B^\sharp} b=0 $ for any $b\in B^0$. Thanks to Lemma~\ref{ejihehudh}, we have
\begin{equation}\label{sssmndwuhdu}
X_{m+1}1_{\ob {m+1}}\otimes _{B^{\sharp}}b =1_{\ob {m+1}}\otimes _{B^\sharp}(L_{m+1}\circ b), \text{ for any } b\in B^0.
\end{equation}
For all admissible Brauer diagrams $f\in B1_{\ob m}$ and $g\in 1_{\ob {m+1}}B^0$, we have
$$\varphi \circ (\text{id}\otimes X^0_L)(f\otimes g)=\varphi(f\otimes (L_{m+1}\circ g))  \overset{~\eqref{sssmndwuhdu}} =\tilde X(f~\begin{tikzpicture}[baseline = 10pt, scale=0.5, color=\clr]
                \draw[-,thick] (0,0.5)to[out=up,in=down](0,1.2);
    \end{tikzpicture} )\otimes g=(\tilde X\otimes \text{id})\circ \varphi(f\otimes g),$$
  proving (2). Thanks to \cite[Lemma~3.1]{RS3},  we have
    \begin{equation}\label{changeminus}
    \begin{tikzpicture}[baseline = 5pt, scale=0.5, color=\clr]
        \draw[-,thick] (0,0) to[out=up,in=left] (1,2) to[out=right,in=up] (2,0);\draw( 0,0.5)\bdot;
    \end{tikzpicture}
    ~=~-~\begin{tikzpicture}[baseline = 5pt, scale=0.5, color=\clr]
        \draw[-,thick] (0,0) to[out=up,in=left] (1,2) to[out=right,in=up] (2,0);\draw( 2,0.5) \bdot;
    \end{tikzpicture},   \end{equation}  in $\AB$, hence in both $\B$ and $\B(\delta_0)$. So,
    for all  $f\in 1_{\ob {m+1}}B1_{\ob n}, g\in 1_{\ob n}B^0$,
    $$ \begin{aligned}
    \psi\circ (\tilde X\otimes \text{id})(f\otimes g)&= ((1_{\ob m}~\lcap)\circ ((X_{m+1}1_{\ob {m+1}}\circ f)~ ~\begin{tikzpicture}[baseline = 10pt, scale=0.5, color=\clr]
                \draw[-,thick] (0,0.5)to[out=up,in=down](0,1.2);
    \end{tikzpicture}  ))\otimes (g~\begin{tikzpicture}[baseline = 10pt, scale=0.5, color=\clr]
                \draw[-,thick] (0,0.5)to[out=up,in=down](0,1.2);
    \end{tikzpicture}) \\
    &\overset{\eqref{changeminus}}=-((1_{\ob m}~\lcap)\circ (f  ~\begin{tikzpicture}[baseline = 10pt, scale=0.5, color=\clr]
                \draw[-,thick] (0,0.5)to[out=up,in=down](0,1.2);
    \end{tikzpicture}  )\circ1_{\ob{n+1}}X_{n+1})\otimes (g~\begin{tikzpicture}[baseline = 10pt, scale=0.5, color=\clr]
                \draw[-,thick] (0,0.5)to[out=up,in=down](0,1.2);
    \end{tikzpicture}) \\
    &\overset{\eqref{sssmndwuhdu}}= -((1_{\ob m}~\lcap)\circ (f  ~\begin{tikzpicture}[baseline = 10pt, scale=0.5, color=\clr]
                \draw[-,thick] (0,0.5)to[out=up,in=down](0,1.2);
    \end{tikzpicture}  ))\otimes (L_{n+1}\circ(g~ \begin{tikzpicture}[baseline = 10pt, scale=0.5, color=\clr]
                \draw[-,thick] (0,0.5)to[out=up,in=down](0,1.2);
    \end{tikzpicture} ))\\
    &= -((1_{\ob m}~\lcap)\circ (f  ~\begin{tikzpicture}[baseline = 10pt, scale=0.5, color=\clr]
                \draw[-,thick] (0,0.5)to[out=up,in=down](0,1.2);
    \end{tikzpicture}  ))\otimes ((g~ \begin{tikzpicture}[baseline = 10pt, scale=0.5, color=\clr]
                \draw[-,thick] (0,0.5)to[out=up,in=down](0,1.2);
    \end{tikzpicture} )\circ L_{n+1})\\
    &=-(\text{id}\otimes X^0_R )\circ \psi(f\otimes g),
    \end{aligned} $$
   proving (3).
\end{proof}

There is a  minus in  Lemma~\ref{msikdjde}(3), which   comes from \eqref{changeminus}. This is the essential  difference between our  paper and  \cite{Re}.
It will lead to a different categorification later on.

\begin{Lemma}\label{usuactifuc} For each $i\in K$, let  $ \tilde E_i:= B_{L, i}\otimes _{B}?$, where $B_{L,i}$ is the generalized $i$-eigenspace of $\tilde X$ on $B_L$. Then
\begin{itemize}\item [(1)]  $\tilde E_i$ is exact,
\item [(2)]there is a short exact sequence of functors
$ 0\rightarrow \Delta\circ E_i\rightarrow \tilde E_i\circ \Delta\rightarrow\Delta\circ  F_{-i}\rightarrow0$,
\item [(3)] $\tilde E=\bigoplus_{i\in I}\tilde E_i$, where $I:=I_0\cup -I_0$ and  $I_0=\frac{\delta_0-1}{2}+\mathbb Z1_ K \subset K$ (see Lemma~\ref{filteration}(2)).
\end{itemize}
\end{Lemma}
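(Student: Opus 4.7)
The plan is to treat all three parts together by analyzing the short exact sequence~\eqref{shoretmor} equivariantly with respect to the $\tilde X$-action, using the intertwining identities of Lemma~\ref{msikdjde}.

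For~(1), by Lemma~\ref{msikdjde}(1), $\tilde X$ is a $(B,B)$-bimodule endomorphism of $B_L$. It preserves each finite-dimensional weight space $1_{\ob m}B_L1_{\ob n}=1_{\ob{m+1}}B1_{\ob n}$, so the generalized eigenspace decomposition on each weight space assembles into a $(B,B)$-bimodule decomposition $B_L=\bigoplus_i B_{L,i}$. Hence each $\tilde E_i=B_{L,i}\otimes_B(-)$ is a direct summand of the exact functor $\tilde E$ (which is exact by Lemma~\ref{sksjdd}) and is therefore itself exact.

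For~(2), by Lemma~\ref{msikdjde}(2)--(3), the exact sequence~\eqref{shoretmor} is equivariant with respect to the triple of operators $(\mathrm{id}\otimes X^0_L,\ \tilde X\otimes\mathrm{id},\ -\mathrm{id}\otimes X^0_R)$ on its three terms. Since passage to the generalized $i$-eigenspace of a fixed endomorphism is an exact operation, restricting to the generalized $i$-eigenspace of $\tilde X\otimes\mathrm{id}$ in the middle (which via the intertwiners becomes the $i$-eigenspace on the left and the $(-i)$-eigenspace on the right) produces an exact sequence of $(B,B^0)$-bimodules
\[
0\to B\otimes_{B^\sharp}B^0_{L,i}\to B_{L,i}\otimes_{B^\sharp}B^0\to B\otimes_{B^\sharp}B^0_{R,-i}\to 0.
\]
Applying $(-)\otimes_{B^0}V$ for any $V\in B^0\text{-mod}$ and identifying $B^0_{L,i}\otimes_{B^0}V=E_i(V)$, $B^0_{R,-i}\otimes_{B^0}V=F_{-i}(V)$, together with $B_{L,i}\otimes_{B^\sharp}V\cong B_{L,i}\otimes_B B\otimes_{B^\sharp}V=\tilde E_i\circ\Delta(V)$, yields the required sequence.

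For~(3), it remains to identify which eigenvalues of $\tilde X$ actually occur. By Lemma~\ref{filteration}(2), the eigenvalues of $X^0_L$ (resp.\ $X^0_R$) on $B^0_L$ (resp.\ $B^0_R$) lie in $I_0$; consequently the eigenvalues of $\mathrm{id}\otimes X^0_L$ on the left end of~\eqref{shoretmor} lie in $I_0$ while those of $-\mathrm{id}\otimes X^0_R$ on the right end lie in $-I_0$. In a short exact sequence of finite-dimensional modules with a compatible endomorphism the characteristic polynomial of the middle term equals the product of those of the ends, so on each weight space the eigenvalues of $\tilde X\otimes\mathrm{id}$ on $B_L\otimes_{B^\sharp}B^0$ lie in $I=I_0\cup(-I_0)$. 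Finally, because $1_{\ob s}B^01_{\ob n}=0$ unless $s=n$, the weight spaces collapse as $1_{\ob m}(B_L\otimes_{B^\sharp}B^0)1_{\ob n}=1_{\ob m}B_L1_{\ob n}\otimes_{K\mathfrak S_n}K\mathfrak S_n=1_{\ob m}B_L1_{\ob n}$, and the bimodule endomorphism $\tilde X\otimes\mathrm{id}$ agrees under this identification with $\tilde X$ on $B_L$. Hence the eigenvalue bound $I$ transfers to $B_L$ itself, giving $B_L=\bigoplus_{i\in I}B_{L,i}$ and $\tilde E=\bigoplus_{i\in I}\tilde E_i$.

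The main obstacle is the last step of~(3): the eigenvalue constraint is produced on $B_L\otimes_{B^\sharp}B^0$ rather than on $B_L$ directly, so one must carefully verify that it transfers back. The weight-space collapse above, together with the triangular decomposition Lemma~\ref{triangularde} (which ensures $B^\sharp$ acts through $B^0$ on the right factor), is what makes this transfer literal and completes the argument.
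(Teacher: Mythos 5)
Your parts (1) and (2) are correct and coincide with the paper's own argument: (1) because $\tilde E_i$ is a direct summand of the exact functor $\tilde E$, and (2) by passing to generalized eigenspaces in \eqref{shoretmor} using the intertwining relations of Lemma~\ref{msikdjde}.

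The last step of your (3) has a genuine gap. The middle term of \eqref{shoretmor} is $B_L\otimes_{B^\sharp}B^0$, a tensor product over $B^\sharp$, not over $B^0$. Since $B^0=B^\sharp\big/\bigoplus_{d\geq1}B^\sharp[d]$, this middle term is the quotient $B_L\big/\bigl(B_L\cdot\bigoplus_{d\geq1}B^\sharp[d]\bigr)$, and by Lemma~\ref{triangularde} its weight spaces are $1_{\ob m}(B_L\otimes_{B^\sharp}B^0)1_{\ob n}\cong 1_{\ob{m+1}}B^\flat 1_{\ob n}$, not $1_{\ob{m+1}}B1_{\ob n}$. For instance, with $m=1$, $n=2$ the element $E^1=U\circ A\in 1_{\ob 2}B1_{\ob 2}=1_{\ob 1}B_L1_{\ob 2}$ satisfies $E^1\otimes 1_{\ob 2}=U\otimes \bar A\,1_{\ob 2}=0$ in $B_L\otimes_{B^\sharp}B^0$ because $A\in B^\sharp[1]$ dies in $B^0$; so that weight space of the middle term is $K\mathfrak S_2$ ($2$-dimensional), while $1_{\ob 1}B_L1_{\ob 2}\cong B_2(\delta_0)$ is $3$-dimensional. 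Hence your claimed ``weight-space collapse'' is false, and an eigenvalue bound on a quotient of $B_L$ gives no control over the eigenvalues of $\tilde X$ on $B_L$ itself. What your computation genuinely proves is that $\tilde E_i\circ\Delta=0$ for $i\notin I$ (since $B_L\otimes_{B^\sharp}B^0\otimes_{B^0}V\cong\tilde E\Delta(V)$), i.e.\ that (3) holds on standard modules, and this is exactly where the paper's proof of (3) starts.

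To close the gap one argues as in the paper: by Proposition~\ref{dektss} (together with Proposition~\ref{usdeks}(4)) every finitely generated projective $B$-module, in particular each $B1_{\ob m}$, has a finite filtration whose sections lie in the image of $\Delta$; since each $\tilde E_i$ is exact and $\tilde E_i\circ\Delta=0$ for $i\notin I$, it follows that $\tilde E_i(B1_{\ob m})=B_{L,i}1_{\ob m}=0$ for all $m$ and all $i\notin I$, whence $B_{L,i}=0$ for $i\notin I$ and $\tilde E=\bigoplus_{i\in I}\tilde E_i$ on every module.
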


\begin{proof} We have (1) since  $\tilde E_i$ is a summand  of the exact functor $\tilde E$.  (2) follows from Lemmas~\ref{intailh},\ref{msikdjde} by passing to appropriate generalized eigenspaces.
Thanks to  Lemma~\ref{filteration} and the short exact sequences in \eqref{diejdhd} and (2), we see that (3) holds on any standard module $\Delta(\lambda)$.
Since    $\tilde E_i$'s are exact, by  Proposition~\ref{dektss}, (3) also holds on all indecomposable projective modules. So, (3) holds on any module.
\end{proof}

Thanks to \cite[Lemma~2.8]{LZ}, $1_{\ob m}B1_{\ob m}\cong B_m(\delta_0)$, which is known as the Brauer algebra with defining  parameter $\delta_0$\cite{B}.
For any positive integer $m$,  $\{X_i1_{\ob m}\mid 1\le i\le m\}$ are  Jucys-Murphy elements of $ 1_{\ob m}B1_{\ob m}$,  where
 $X_i1_{\ob m}$'s are given in Lemma~\ref{ejihehudh}. They generate a finite dimensional commutative subalgebra of  $1_{\ob m}B1_{\ob m}$.
 For any $\mathbf i=(i_1,i_2,\ldots, i_m)\in I^m$, there is an idempotent $1_{\mathbf i}\in 1_{\ob m}B1_{\ob m}$ which projects any $1_{\ob m}B1_{\ob m}$-module
onto the simultaneous generalized eigenspaces of $X_11_{\ob m} ,\ldots,X_{\ob m}1_{\ob m}$ with respect to the eigenvalues $i_1,\ldots,i_m$.

\begin{Defn}\label{defincsomega}
For any $\lambda\in\Lambda$ and any box $x$ in row $i$ and column $j$ of the Young diagram of $\lambda$, let $c_{\delta_0}(x)=\frac{\delta_0-1}{2}+c(x)\in I$, where $I$ is given in Lemma~\ref{usuactifuc}(3) and  $c(x)\equiv j-i(\text{mod } p)$,  the usual  content of $x$.
\end{Defn}

Let  $\mathbb B$ be  the graph such that the set of  vertices is  $\Lambda$ and  any edge is of form   $\lambda$---$\mu$
whenever $\mu$ is obtained from $\lambda$ by either  adding or   removing  a box $x$.
A path $\gamma:\lambda \rightsquigarrow\mu$ is of length $m$ if  there is a  sequence of partitions $\lambda=\lambda_0, \lambda_1,\ldots,\lambda_m=\mu$ such that $\lambda_{j-1}$---$\lambda_{j}$ is
  colored by  $i_j\in I$, for all $1\le j\le m$, where
    $$i_j=\begin{cases} c_{\delta_0}(x), & \text{if $\lambda_j$ is obtained from $\lambda_{j-1}$ by adding a box $x$,} \\
  -c_{\delta_0}(x), &\text{if $\lambda_{j}$ is obtained from $\lambda_{j-1}$ by removing a box $x$.}\\
  \end{cases}$$
In this case, we say that   $\gamma$ is of content  $\mathbf i=(i_1, i_2, \ldots, i_m)$ and write   $c(\gamma)=\mathbf i$.
When $m=0$, we set $1_\emptyset=1_{\ob 0}$ and we say there is a unique path from $\emptyset$ to $\emptyset$ with length $0$. The corresponding content  is  $ \emptyset$.

\begin{Defn} For any $V\in B$-lfdmod, define the formal character of $V$ by
\begin{equation}
\text{ch}V=\sum_{m\in\mathbf N, \mathbf i\in I^m}(\dim 1_{\mathbf i}V)e^{\mathbf i},
\end{equation}
where $1_{\mathbf i}V$ is the   simultaneous generalized eigenspace of $X_11_{\ob m} ,\ldots,X_m 1_{\ob m}$ corresponding to $\mathbf i$.
\end{Defn}
The strategy for the proof of the following result is similar to the case treated in \cite{Br,Re}.
\begin{Prop}\label{charcter}
For any $\lambda\in\Lambda$,
$\text{ch}\tilde \Delta(\lambda)=\sum_{\gamma}e^{c(\gamma)}$
where $\gamma$ ranges over all paths  from $\emptyset$ to $\lambda$.
\end{Prop}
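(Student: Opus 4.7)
The plan is to prove the character formula by induction on $m$, where $\mathbf{i} = (i_1,\ldots,i_m) \in I^m$, using the short exact sequence of Lemma~\ref{usuactifuc}(2) together with Lemma~\ref{filteration} to set up a recursion that precisely matches the recursion governing paths in $\mathbb{B}$.

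First I would establish the basic identity relating $\text{ch}(\tilde E_i V)$ to $\text{ch}(V)$. Unwinding the definitions, $1_{\ob n}\tilde E_i V = 1_{\ob n}B_L \otimes_B V \cong 1_{\ob{n+1}}V$, and via Lemma~\ref{msikdjde}(1) the action of $\tilde X$ is carried to the action of $X_{n+1}$ on $1_{\ob{n+1}}V$, while $X_1,\ldots,X_n$ act compatibly. Hence $\dim 1_{(\mathbf j,i)} V = \dim 1_{\mathbf j}\bigl(1_{\ob n}\tilde E_i V\bigr)$, so passing to formal characters gives $\text{ch}(\tilde E_i V) = \sum_{n\ge 0,\,\mathbf j\in I^n}(\dim 1_{(\mathbf j,i)}V)\,e^{\mathbf j}$. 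Thus it is enough, for each $i\in I$, to identify $\text{ch}(\tilde E_i\tilde\Delta(\lambda))$ with the character generating function for paths from $\emptyset$ to $\lambda$ whose final edge is colored $i$.

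Next I would apply Lemma~\ref{usuactifuc}(2) to $S(\lambda)$ to get the short exact sequence
\[
0 \longrightarrow \Delta(E_i S(\lambda)) \longrightarrow \tilde E_i\tilde\Delta(\lambda) \longrightarrow \Delta(F_{-i}S(\lambda)) \longrightarrow 0,
\]
which, since $\Delta$ is exact (Lemma~\ref{ddheuhd}) and characters are additive on short exact sequences, yields $\text{ch}(\tilde E_i\tilde\Delta(\lambda)) = \text{ch}(\Delta(E_i S(\lambda))) + \text{ch}(\Delta(F_{-i}S(\lambda)))$. Lemma~\ref{filteration}(3),(4) then identifies the first summand as a sum of $\text{ch}(\tilde\Delta(\mu))$ over $\mu = \lambda\setminus x$ with $c(x) = i - \tfrac{\delta_0-1}{2}$, and the second summand as a sum of $\text{ch}(\tilde\Delta(\nu))$ over $\nu = \lambda\cup x$ with $c(x) = -i - \tfrac{\delta_0-1}{2}$. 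Observe that these are exactly the two kinds of predecessors of $\lambda$ in $\mathbb{B}$ connected by an edge colored $i$: in the first case the final edge $\mu\!\to\!\lambda$ \emph{adds} a box of color $c_{\delta_0}(x) = i$, and in the second case the final edge $\nu\!\to\!\lambda$ \emph{removes} a box of color $-c_{\delta_0}(x) = i$. (The restriction $i\in I_0$ or $-i\in I_0$ needed for the respective summand to be nonzero matches the integrality condition on the content of the box moved.)

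Finally, I would run the induction on $m$. The base case $m=0$ reduces to $\dim 1_{\ob 0}\tilde\Delta(\lambda) = \delta_{\lambda,\emptyset}$, which is the shortest word space calculation from the proof of Theorem~\ref{hd1}, matching the sole length-$0$ path from $\emptyset$ to $\emptyset$. For the inductive step, taking the coefficient of $e^{(i_1,\ldots,i_{m-1})}$ at level $\ob{m-1}$ on both sides of the above identity, and invoking the inductive hypothesis on $\text{ch}(\tilde\Delta(\mu))$ and $\text{ch}(\tilde\Delta(\nu))$ for paths of length $m-1$, gives $\dim 1_{\mathbf i}\tilde\Delta(\lambda)$ as the total number of paths $\emptyset\rightsquigarrow\lambda$ of content $\mathbf i$. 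The main obstacle is purely bookkeeping: verifying that the sign twist in Lemma~\ref{msikdjde}(3) (which forces $F_{-i}$ rather than $F_i$ in the SES) produces exactly the right colors $-c_{\delta_0}(x)$ for removal edges so that adds and removes assemble into a single path recursion, rather than indicating a mismatch.
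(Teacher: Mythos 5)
Your proposal is correct and follows essentially the same route as the paper: induction on $m$, the identification of $1_{\ob{m-1}}\tilde E_{i}\tilde\Delta(\lambda)$ with the generalized $i$-eigenspace of $X_m$ on $1_{\ob m}\tilde\Delta(\lambda)$, and the short exact sequence of Lemma~\ref{usuactifuc}(2) combined with Lemma~\ref{filteration}(3)--(4) to obtain the $\tilde\Delta$-filtration matching the path recursion. Your bookkeeping of the colors (adds colored $c_{\delta_0}(x)=i$, removes colored $-c_{\delta_0}(x)=i$ via $F_{-i}$) is exactly the paper's.
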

\begin{proof}It suffices to show that $\dim 1_{\mathbf i}\tilde \Delta(\lambda)$ is equal to the number of all  paths from $\emptyset$ to $\lambda$ with content $\mathbf i$, for all $\mathbf i\in I^m$ and $m\in \mathbb N$.
Suppose that $\lambda\in\Lambda(n)$. As in the proof of Theorem~\ref{hd1}, we see that
\begin{equation}\label{zmmzzm}
\tilde\Delta(\lambda)=\oplus_{d\in\mathbb N}1_{\ob {n+2d}}\tilde\Delta(\lambda) \text{ and } 1_{\ob n} \tilde\Delta(\lambda)=1_{\ob n}\otimes S(\lambda).
\end{equation}
Suppose that  $m=0$. If $\lambda\neq\emptyset$, then $1_\emptyset \tilde\Delta(\lambda)=0$. If $\lambda=\emptyset$, then  $1_\emptyset \tilde\Delta(\lambda)=K$. So, the result holds for $m=0$.
Next we
assume  that  $m>0$ and we prove the result by induction on $m$ as follows.

By the definition of $\tilde E$ in Lemma~~\ref{sksjdd}, we have $1_{\ob {m-1}}\tilde E (\tilde \Delta(\lambda))=1_{\ob {m-1}}B_L\otimes _{B}\tilde \Delta(\lambda)=1_{\ob m}B\otimes_B \tilde \Delta(\lambda) $. So, we have a $1_{\ob {m-1}}B1_{\ob{m-1}}$-isomorphism
$$f: 1_{\ob {m-1}}\tilde E (\tilde \Delta(\lambda))=1_{\ob m}B\otimes_B \tilde \Delta(\lambda)\rightarrow 1_{\ob m}  \tilde \Delta(\lambda),b\otimes v\mapsto bv, \text{  for all $b\in 1_{\ob m}B$ and $v\in  \tilde \Delta(\lambda)$ }. $$
Note that $1_{\ob {m-1}}\tilde E_{i} (\tilde \Delta(\lambda))=1_{\ob {m-1}} (B_L)_i\otimes _B \tilde \Delta(\lambda)= (1_{\ob m} B)_i\otimes _B \tilde \Delta(\lambda)$, where
$ (1_{\ob m} B)_i$ is the generalized $i$-eigenspace of $X_m1_{\ob m}$ on $1_{\ob m}B$.
So,  restricting  $f$ to $1_{\ob {m-1}}\tilde E_{i} (\tilde \Delta(\lambda)) $ gives an isomorphism between $ 1_{\ob {m-1}}\tilde E_{i} (\tilde \Delta(\lambda))$ and the
generalized $i$-eigenspace of $X_m1_{\ob m}$ on $1_{\ob m} \tilde \Delta(\lambda)$ and hence
\begin{equation}\label{cdcedccde}
\dim(1_{\mathbf i}\tilde\Delta(\lambda))=\dim(1_{\mathbf i'}\tilde E_{i_m}\tilde\Delta(\lambda)),
\end{equation}
for  any $\mathbf i\in I^m$, where $\mathbf i'=(i_1,\ldots,i_{m-1})$.

Thanks to Lemma~\ref{usuactifuc}(2) and  Lemma~\ref{filteration}(3)-(4), $\tilde E_i\tilde \Delta(\lambda)$ has a
$\tilde\Delta$-filtration such that $\tilde\Delta(\mu)$ appears as a section if and only if $\mu$ is obtained from $\lambda $ by either  removing a box $x$ satisfying  $i=c_{\delta_0}(x)$ or
adding a box $x$ satisfying $i=-c_{\delta_0}(x)$.
Now the result follows from \eqref{cdcedccde} and induction on $m$.

\end{proof}

\begin{rem}Suppose $\lambda\in\Lambda(n)$. As in \eqref{zmmzzm},   $1_{\ob m}\tilde \Delta(\lambda)\neq 0$
only if $m=n+2f$ for some $f\in\mathbb N$. In this case,  $1_{\ob m}\tilde \Delta(\lambda)$ is isomorphic to the cell module of $1_{\ob m}B1_{\ob m}$ with respect to $\lambda$ (see Proposition~\ref{xeijdiec}).
Then Proposition ~\ref{charcter} can also be obtained by using the fact (see \cite[10.7]{Eny}) that the cell module $1_{\ob m}\tilde \Delta(\lambda)$
has a Murphy  basis indexed by  $\{\gamma: \emptyset\rightsquigarrow\lambda\mid \gamma \text{ is of  length $m$}\}$, and the simultaneous generalized eigenvalue of  corresponding basis element is   $c(\gamma)$.
\end{rem}
\begin{Cor}\label{twopath}
Suppose  $\lambda\in\Lambda, \mu\in \Lambda_p(m)$.  If $[\tilde \Delta(\lambda):L(\mu)]\neq0$, then there are two  paths $\gamma:\emptyset \rightsquigarrow \lambda$
and $\delta: \emptyset\rightsquigarrow\mu$  of length $m$ such that  $c(\gamma)=c(\delta)$.
\end{Cor}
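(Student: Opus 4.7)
The plan is to run a character comparison, using Proposition~\ref{charcter} as the main input on the side of $\tilde\Delta(\lambda)$ and the classical representation theory of $K\mathfrak S_m$ on the side of $L(\mu)$. First, I would analyze the shortest word space of $L(\mu)$. By Theorem~\ref{hd1}(2), $L(\mu)^s\cong D(\mu)$, so $1_{\ob m}L(\mu)=D(\mu)$ is a nonzero simple $K\mathfrak S_m$-module. Consequently some simultaneous generalized eigenspace $1_{\mathbf i}D(\mu)$ of $X_1 1_{\ob m},\ldots,X_m 1_{\ob m}$ is nonzero, for some $\mathbf i=(i_1,\ldots,i_m)\in I^m$. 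By the classical theory of Jucys--Murphy elements on simple $K\mathfrak S_m$-modules (Grojnowski--Vazirani--Kleshchev; see e.g.\ \cite{Kle}), every such $\mathbf i$ arising on $D(\mu)$ has the form $(\frac{\delta_0-1}{2}+c(x_1),\ldots,\frac{\delta_0-1}{2}+c(x_m))$ for some standard $\mu$-tableau with boxes $x_1,\ldots,x_m$. In the language of $\mathbb B$, this is precisely the content $c(\delta)$ of a path $\delta\colon\emptyset\rightsquigarrow\mu$ of length $m$, which is necessarily composed entirely of box additions since $|\mu|=m$. Fix such $\mathbf i$ and $\delta$.

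Next, I would transfer this nonvanishing across the hypothesis. The hypothesis $[\tilde\Delta(\lambda):L(\mu)]\neq 0$ provides a filtration of $\tilde\Delta(\lambda)$ in which $L(\mu)$ occurs as a subquotient. Since each idempotent $1_{\mathbf i}\in 1_{\ob m}B 1_{\ob m}$ is central in its action on the $m$th weight space and commutes with $B$-submodule inclusions at weight $\ob m$, applying $1_{\mathbf i}$ to this filtration produces a filtration of $1_{\mathbf i}\tilde\Delta(\lambda)$ containing $1_{\mathbf i}L(\mu)\supseteq 1_{\mathbf i}D(\mu)\neq 0$ as a subquotient. Therefore $1_{\mathbf i}\tilde\Delta(\lambda)\neq 0$. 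Proposition~\ref{charcter} then identifies $\dim 1_{\mathbf i}\tilde\Delta(\lambda)$ with the number of paths $\gamma\colon\emptyset\rightsquigarrow\lambda$ of length $m$ with $c(\gamma)=\mathbf i$; since this count is nonzero, at least one such $\gamma$ exists and satisfies $c(\gamma)=\mathbf i=c(\delta)$, which is exactly the conclusion.

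The main obstacle is the classical input about Jucys--Murphy eigenvalues on the simple module $D(\mu)$: one needs to justify (rather than merely observe) that \emph{every} tuple $\mathbf i$ with $1_{\mathbf i}D(\mu)\neq 0$ genuinely realizes the residue sequence of a standard $\mu$-tableau, so that the endpoint of the resulting path is indeed $\mu$ and not some other partition. In characteristic zero this is immediate from the Young seminormal basis; in positive characteristic it requires the Grojnowski--Vazirani theory of crystal operators on $K\mathfrak S_m$-mod, which is standard but external to the paper. Once that fact is granted, the remainder of the argument is a purely formal character comparison, parallel to the oriented-Brauer and oriented-skein analogues in \cite{Re,Br} cited by the authors.
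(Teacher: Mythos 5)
Your argument is correct and, on the $\lambda$-side, is exactly the paper's: pick $\mathbf i\in I^m$ with $1_{\mathbf i}L(\mu)=1_{\mathbf i}D(\mu)\neq 0$ (possible by Theorem~\ref{hd1}(2)), use $[\tilde\Delta(\lambda):L(\mu)]\neq 0$ and exactness of multiplication by the idempotent $1_{\mathbf i}$ to get $1_{\mathbf i}\tilde\Delta(\lambda)\neq0$, then read off a path $\gamma:\emptyset\rightsquigarrow\lambda$ with $c(\gamma)=\mathbf i$ from Proposition~\ref{charcter}. Where you diverge is in producing the path $\delta:\emptyset\rightsquigarrow\mu$: you import the classical fact that every joint generalized Jucys--Murphy eigenvalue sequence on $D(\mu)$ is the (shifted) residue sequence of a standard $\mu$-tableau, and you correctly flag this as the external obstacle. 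The paper avoids that input entirely by a second application of Proposition~\ref{charcter}, this time to $\tilde\Delta(\mu)$: since $L(\mu)$ is the head of $\bar\Delta(\mu)$ and $\Delta$ is exact (the paper cites Proposition~\ref{dektss}(2) and Corollary~\ref{xijixs}(2), but $[\bar\Delta(\mu):L(\mu)]\geq 1$ already suffices), one has $[\tilde\Delta(\mu):L(\mu)]\neq0$, hence $1_{\mathbf i}\tilde\Delta(\mu)\neq0$, and Proposition~\ref{charcter} hands you a path $\delta$ to $\mu$ of length $m$ with $c(\delta)=\mathbf i$ (which, since $|\mu|=m$, is automatically a sequence of additions, i.e.\ your standard tableau). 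So your flagged gap can be closed with no reference to Grojnowski--Vazirani or seminormal forms, using only results already proved in the paper; if you do keep your route, you should also note explicitly that on the shortest word space the terms $\bar{(k,l)}$ in $L^m_l$ act as zero (they factor through $1_{\ob{m-2}}$), so that by Lemma~\ref{ejihehudh} the $X_i1_{\ob m}$ really restrict to $\frac{\delta_0-1}{2}$ plus the ordinary symmetric-group Jucys--Murphy elements on $D(\mu)$ before the classical theory can be invoked. Either way the conclusion $c(\gamma)=\mathbf i=c(\delta)$ follows; the paper's version is simply the more self-contained of the two.
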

\begin{proof}Thanks to Theorem~\ref{hd1}(2),
we have    $1_{\ob m}L(\mu)=L(\mu)^s=D(\mu)\neq 0$ since $\mu\in \Lambda_p(m)$.  Pick an $ \mathbf i\in I^m$ such that $1_{\mathbf i}L(\mu)\neq0$. Since we are assuming that $[\tilde \Delta(\lambda):L(\mu)]\neq0$,  we have $1_{\mathbf i}\tilde\Delta(\lambda)\neq0$.
Thanks to Proposition~\ref{dektss}(2) and Corollary~\ref{xijixs}(2), $[\bar \Delta(\mu): L(\mu)]=1$. Since $\Delta$ is exact, $[\tilde \Delta(\mu):L(\mu)]\neq0$, forcing    $1_{\mathbf i}\tilde\Delta(\mu)\neq0$. Now, the result follows from Proposition~\ref{charcter}, immediately.
\end{proof}
\begin{Cor}\label{sjdsdh}
Suppose $\delta_0\notin\mathbb Z 1_K$. Then \begin{itemize}\item [(1)] $P(\mu)=\Delta(\mu)$ for all  $\mu\in \Lambda_p$,
\item [(2)]  $\Delta: B^0\text{-mod}\rightarrow B\text{-mod} $ is an equivalence of categories. \end{itemize}
\end{Cor}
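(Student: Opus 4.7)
The plan starts with the key arithmetic observation: under the hypothesis $\delta_0\notin\mathbb Z 1_K$, the sets $I_0=\frac{\delta_0-1}{2}+\mathbb Z 1_K$ and $-I_0$ are disjoint in $K$, since any common element would force $\delta_0\in\mathbb Z 1_K$. Consequently, in the graph $\mathbb B$, every addition of a box contributes a color in $I_0$ while every removal contributes a color in $-I_0$, and these two color sets are never identified.

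For (1), fix $\lambda\in\Lambda_p(m)$. By Corollary~\ref{ijxxexeu}, $P(\lambda)$ has a finite $\tilde\Delta$-flag with $\tilde\Delta(\mu)$ appearing with multiplicity $[\tilde\Delta(\mu):L(\lambda)]$ for $\mu\in\coprod_{d\ge 0}\Lambda(m-2d)$. If this multiplicity is nonzero, Corollary~\ref{twopath} produces paths $\gamma:\emptyset\rightsquigarrow\mu$ and $\delta:\emptyset\rightsquigarrow\lambda$ of length $m$ with $c(\gamma)=c(\delta)$. Since $|\lambda|=m$, the path $\delta$ is made entirely of box additions, so $c(\delta)\in I_0^m$; by the disjointness above, $c(\gamma)\in I_0^m$ as well, forcing $\gamma$ to be all additions and $|\mu|=m$. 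Because each $\Delta(\nu)$ with $\nu\in\Lambda_p(n)$ admits a $\tilde\Delta$-refinement whose sections $\tilde\Delta(\mu)$ all satisfy $|\mu|=n$, having the $\tilde\Delta$-sections of $P(\lambda)$ concentrated at level $m$ forces $(P(\lambda):\Delta(\nu))\neq 0\Rightarrow|\nu|=m$. BGG reciprocity (Corollary~\ref{xijixs}(2)) then reduces the remaining multiplicities to computing $[\bar\Delta(\nu):L(\lambda)]$ for $\nu,\lambda\in\Lambda_p(m)$: applying the exact functor $1_{\ob m}\cdot$ to a composition series of $\bar\Delta(\nu)$ and using $1_{\ob m}\bar\Delta(\nu)\cong D(\nu)$ together with $1_{\ob m}L(\mu')=0$ whenever $|\mu'|>m$ shows that the only composition factor of $\bar\Delta(\nu)$ at level $m$ is $L(\nu)$, with multiplicity one. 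Hence $(P(\lambda):\Delta(\nu))=\delta_{\nu,\lambda}$, and combined with Proposition~\ref{dektss}(2) this gives $P(\lambda)=\Delta(\lambda)$.

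For (2), I would argue that $\Delta$ is exact, fully faithful and essentially surjective. Exactness is Lemma~\ref{ddheuhd}(1), and (1) gives $\Delta(Y(\lambda))=P(\lambda)$, establishing a bijection between complete sets of indecomposable projectives. Fully faithfulness reduces, via projective presentations, to showing that the natural map $\Hom_{B^0}(Y(\lambda),Y(\mu))\to\Hom_B(\Delta(\lambda),\Delta(\mu))$ is an isomorphism for all $\lambda,\mu\in\Lambda_p$. Injectivity is immediate because a nonzero $f:Y(\lambda)\to Y(\mu)$ restricts to itself on the shortest-word space of $\Delta(\mu)$. For the dimensions, Proposition~\ref{usdeks}(6) gives $\dim\Hom_B(P(\lambda),\Delta(\mu))=[\Delta(\mu):L(\lambda)]$; computing this via the Specht filtration of $Y(\mu)$, Corollary~\ref{xijixs}(3), and the already-proved identity $(P(\lambda):\Delta(\xi))=\delta_{\lambda,\xi}$, the multiplicity collapses to $[Y(\mu):D(\lambda)]=\dim\Hom_{B^0}(Y(\lambda),Y(\mu))$. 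Essential surjectivity then follows because any $V\in B\text{-mod}$ has a projective presentation with terms of the form $\Delta(Q_i)$ for $B^0$-projectives $Q_i$; fully faithfulness lifts the differential, and exactness of $\Delta$ realizes $V$ as $\Delta$ of the cokernel. The main obstacle lies inside (1): one must convert the combinatorial vanishing of $\tilde\Delta$-multiplicities into vanishing of $\Delta$-multiplicities at off-diagonal levels, and then rule out the level-$m$ off-diagonal contributions via the composition-series analysis of $\bar\Delta(\nu)$; once this hurdle is cleared, the Morita-theoretic portion of (2) is essentially bookkeeping.
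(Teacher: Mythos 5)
Your proposal is correct, and its engine for part (1) is the same as the paper's: the disjointness of $I_0$ and $-I_0$ when $\delta_0\notin\mathbb Z 1_K$, fed into Corollary~\ref{twopath} to force any partition linked to $\lambda\in\Lambda_p(m)$ by a nonzero multiplicity to have size exactly $m$, followed by truncation with the idempotent $1_{\ob m}$ and simplicity of the $D(\cdot)$'s to kill the off-diagonal terms. The paper phrases this directly as $[\bar\Delta(\lambda):L(\mu)]=0$ for $\lambda\neq\mu$ and then quotes Proposition~\ref{dektss} and Corollary~\ref{xijixs}, whereas you route through the $\tilde\Delta$-flag of $P(\lambda)$ from Corollary~\ref{ijxxexeu}; that works, but note two small points of hygiene: $\tilde\Delta$-multiplicities are only computed in the specific flag of Corollary~\ref{ijxxexeu}, so it is cleaner to argue $(P(\lambda):\Delta(\nu))=[\bar\Delta(\nu):L(\lambda)]\le[\tilde\Delta(\nu):L(\lambda)]$ and apply the path argument to the right-hand side, and $\bar\Delta(\nu)$ need not have a finite composition series, so "composition series" should be replaced by "a filtration with a section isomorphic to $L(\lambda)$" (which is all that $[\bar\Delta(\nu):L(\lambda)]\neq0$ provides, and all your $1_{\ob m}$-argument needs). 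Where you genuinely diverge is part (2): the paper, after observing that $\Delta$ sends the complete set $\{Y(\lambda)\}$ of indecomposable projectives to $\{P(\lambda)\}$, invokes the general locally unital Morita-type result \cite[Corollary~2.5]{BRUNDAN}, while you verify the equivalence by hand: injectivity of $\Hom_{B^0}(Y(\lambda),Y(\mu))\to\Hom_B(\Delta(\lambda),\Delta(\mu))$ via the shortest-word space, equality of dimensions via Proposition~\ref{usdeks}(6), BGG reciprocity and $(P(\lambda):\Delta(\xi))=\delta_{\lambda,\xi}$ (giving $[\Delta(\mu):L(\lambda)]=[Y(\mu):D(\lambda)]$), and essential surjectivity by projective presentations together with right exactness of $\Delta$. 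This buys a self-contained argument at the cost of some bookkeeping you should acknowledge: to treat all of $B$-mod (not just finitely generated modules) you need that every module admits a presentation by direct sums of $P(\lambda)$'s, which follows because each $B1_{\ob m}$ splits into finitely many indecomposable projectives via primitive idempotents of the finite-dimensional algebra $1_{\ob m}B1_{\ob m}$, and the Hom-comparison must be extended from the $Y(\lambda)$'s to such direct sums using that $Y(\lambda)$ and $\Delta(\lambda)$ are finitely generated.
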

\begin{proof}
By Proposition~\ref{dektss} and Corollary~\ref{xijixs}, we have (1) if   $[\bar\Delta(\lambda):L(\mu)]=0$ for any two distinct
 $\mu, \lambda\in \Lambda_p$. If it were false, then
$[\bar\Delta(\lambda):L(\mu)]\neq0$, for some  $\mu\in \Lambda_p(m)$ and some $\lambda\neq \mu$. So,
 $[\tilde \Delta(\lambda):L(\mu)]\neq0$. By Corollary~\ref{twopath}, there are two paths   $\gamma:\emptyset \rightsquigarrow \lambda$  and $\delta: \emptyset\rightsquigarrow\mu$  such that
$c(\gamma)=c(\delta)\in I^m$.
Since $\mu\in \Lambda_p(m)$, we have
$c(\delta)=(c_{\delta_0}(x_1),\ldots,c_{\delta_0}(x_m))$, where $(x_1, \ldots,x_m)$ is a permutation of all boxes in $\mu$.
Note  that $\delta_0\notin \mathbb  Z1_K $. So,   $-\frac{\delta_0-1}{2} +a\neq  c_{\delta_0}(x_i)$ for all admissible $i$ and all $a\in \mathbb Z 1_K$, forcing   $\lambda\in \Lambda_p(m)$.
So, $1_{\ob m}\bar\Delta(\lambda)=D(\lambda)$ (see Theorem~\ref{hd1}). Since $[\bar\Delta(\lambda):L(\mu)]\neq0$, there is a non-zero $B$-homomorphism from
 $L(\mu)$ to a quotient module of $\bar \Delta(\lambda)$. Acting $1_{\ob m}$ yields  a non-zero $B^0$-homomorphism from $D(\mu)$ to $D(\lambda)$, forcing  $\lambda=\mu$, a contradiction.

  By (1),  the exact functor $\Delta$ sends  projective $B^0$ modules $Y(\lambda)$'s  to  projective $B$-modules $P(\lambda)$'s. Recall that $B^0\cong \oplus_{m\in\mathbb N} K\mathfrak S_m$. It is well-known that
$\{Y(\lambda)\mid \lambda\in\Lambda_p \}$ gives a complete set of representatives of all indecomposable objects of  $  B^0\text{-pMod} $.
 Thanks to the general  result on locally unital  algebras in  \cite[Corollary~2.5]{BRUNDAN}, we see that $\Delta$ is an equivalence of categories, proving (2).
\end{proof}

\begin{Defn}\label{ksiijejeddd}
Recall that $ 1_{\ob m}B1_{\ob m}$   is isomorphic to the Brauer algebra $B_m(\delta_0)$ with the defining parameter $\delta_0$.  Define
\begin{enumerate}

\item  $E^0:=1$,  $E^k:=U^{\otimes k}\circ A^{\otimes k}\in 1_{\ob {2k}}B1_{\ob {2k}}$ for  $1\leq k\leq \lceil m/2\rceil$, where $U=\lcup$ and $ A=\lcap$ as in section~2.

%


\item $D_{k,m}$, a complete set of left coset representative for $\mathcal B_f\times \mathfrak S_{m-2k}$ in $\mathfrak S_m$,
where  $\mathcal B_0=\langle1\rangle$, $\mathcal B_1=\langle S_1\rangle$, and
$\mathcal B_k=\langle S_{m-2i+1}, S_{m-2i}S_{m-2i-1}S_{m-2i+1}S_{m-2i}\mid  1\leq i\leq k\rangle$,
\item  $B^k$,  the two-sided ideal of $B_m(\delta_0)$ generated by $1_{\ob {m-2k}}\otimes E^k$ (which will be denoted by  $1_{\ob {m-2k}} E^k$ by abusing of notation in \eqref{com1}),
    \item  $\Lambda^+(m)=\coprod_{0\leq d\leq \lceil m/2\rceil}\Lambda(m-2d)$, and $\Lambda_p^+(m)=\coprod_{0\leq d\leq \lceil m/2\rceil}\Lambda_p(m-2d)$, \end{enumerate}\end{Defn}

It is proved in \cite{GL} that $B_m(\delta_0)$ is a cellular algebra in the sense of \cite{GL}  and it has  cell modules as follows.

\begin{Lemma} \cite{DHW, CVM}
For any $\lambda\in\Lambda(m-2k)$,  $\Delta_m(\lambda):= B^k/B^{k+1}\otimes _{K\mathfrak S_{m-2k}}S(\lambda)$ is the   (left) cell module of $B_m(\delta_0)$. It  has a basis
$\{d \circ  (1_{\ob {m-2k}} E^k)\otimes x\mid d\in D_{k,m}, x\in \mathcal S_\lambda  \}$,
where $\mathcal S_\lambda$ is a basis of $S(\lambda)$.\end{Lemma}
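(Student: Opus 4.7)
The plan is to follow the Graham--Lehrer cellular algebra approach from \cite{GL}, specialized to the Brauer algebra as in \cite{DHW, CVM}. The argument breaks into three main steps: analyzing $B^k/B^{k+1}$ combinatorially, parametrizing half-diagrams using $D_{k,m}$, and identifying the claimed basis of the cell module.

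First, using Theorem~\ref{basisofb} applied to the identification $1_{\ob m} B 1_{\ob m} \cong B_m(\delta_0)$, I would show that $B^k$ is the $K$-span of all Brauer diagrams on $\ob m$ with at least $k$ horizontal pairs on each row. Indeed, any element of $B_m(\delta_0) \cdot (1_{\ob{m-2k}} E^k) \cdot B_m(\delta_0)$ inherits the cups and caps of $E^k$ after composition, since composition with another Brauer diagram can only preserve or increase the number of horizontal pairs on each row. Consequently, $B^k/B^{k+1}$ has a basis given by equivalence classes of diagrams with \emph{exactly} $k$ horizontal pairs on each row.

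Second, any such diagram is uniquely determined by three pieces of data: the positions of $k$ cups on the top row, the positions of $k$ caps on the bottom row, and a bijection (the through-strand permutation) between the remaining $m-2k$ endpoints on each row. The subgroup $\mathcal{B}_k \times \mathfrak{S}_{m-2k} \subset \mathfrak{S}_m$ is constructed precisely so that $\mathcal{B}_k$ stabilizes the cup configuration of $E^k$ (it realizes the $2^k k!$ symmetries permuting the $k$ pairs and swapping endpoints within each pair), while $\mathfrak{S}_{m-2k}$ permutes the remaining through-strand endpoints. Hence $D_{k,m}$ is in bijection with top half-diagrams having $k$ cups. Every diagram $b \in B^k \setminus B^{k+1}$ can thus be written modulo $B^{k+1}$ in the normal form $b \equiv d_1 \circ (w \otimes 1_{\ob{2k}}) \circ (1_{\ob{m-2k}} E^k) \circ d_2^{-1}$ for unique $d_1, d_2 \in D_{k,m}$ and $w \in \mathfrak{S}_{m-2k}$.

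Finally, passing to $\Delta_m(\lambda) = B^k/B^{k+1} \otimes_{K\mathfrak{S}_{m-2k}} S(\lambda)$, the right $K\mathfrak{S}_{m-2k}$-action (via the standard embedding on the leftmost $m-2k$ strands) lets me slide the $\mathfrak{S}_{m-2k}$-contributions across the tensor product: any simple tensor $b \otimes x$ rewrites as $d_1 \circ (1_{\ob{m-2k}} E^k) \otimes (w' \cdot x)$ for some $w' \in \mathfrak{S}_{m-2k}$ absorbing the contributions from $w$ and from the through-strand part of $d_2^{-1}$ (the non-through-strand part of $d_2^{-1}$ is either stabilized by $\mathcal{B}_k$ or produces an extra horizontal pair that sends the result into $B^{k+1}$, hence zero). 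This establishes spanning by $\{d \circ (1_{\ob{m-2k}} E^k) \otimes x : d \in D_{k,m}, x \in \mathcal{S}_\lambda\}$, and linear independence follows from a dimension count matching the well-known dimension $|D_{k,m}| \cdot \dim S(\lambda)$ of the Brauer cell module. The main obstacle is the careful bookkeeping in this last step: one must track potential closed loops (which would yield factors of $\delta_0$) and extra cup-cap pairs arising when moving the right $K\mathfrak{S}_{m-2k}$-action across the idempotent $1_{\ob{m-2k}} E^k$, and verify that neither obstructs the clean normal form --- this is precisely where the detailed calculations in \cite{DHW, CVM} are carried out.
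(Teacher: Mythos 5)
The paper gives no argument for this lemma at all: it is imported from \cite{DHW,CVM} (with cellularity of $B_m(\delta_0)$ quoted from \cite{GL}), so there is no internal proof to compare your sketch against, and I can only measure it against the standard argument it is meant to reproduce. Your first two steps are sound: by Theorem~\ref{basisofb}, $B^k$ is spanned by the diagrams with at least $k$ horizontal arcs per row, $B^k/B^{k+1}$ by those with exactly $k$, and $D_{k,m}$ is in bijection with the configurations of $k$ cups among $m$ points because $\mathcal B_k\times\mathfrak S_{m-2k}$ is exactly the subgroup preserving the cup configuration of $1_{\ob{m-2k}}E^k$ together with the through points.

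The genuine gap is in your last step. Across $\otimes_{K\mathfrak S_{m-2k}}$ you may slide only elements of $K\mathfrak S_{m-2k}$ (embedded on the first $m-2k$ strands); neither the generators of $\mathcal B_k$ nor the portion of $d_2^{-1}$ that relocates the bottom caps lie in that subalgebra, and right-composing with them creates no extra arc, so they are not absorbed and not killed in $B^{k+1}$. Hence your rewriting $b\otimes x=d_1\circ(1_{\ob{m-2k}}E^k)\otimes w'x$ is unjustified, and under the literal reading of the displayed tensor product it is in fact impossible: for $m=3$, $k=1$, $\lambda\in\Lambda(1)$, the space $B^1/B^2$ is $9$-dimensional and $\mathfrak S_{m-2k}$ is trivial, so $B^1/B^2\otimes_{K\mathfrak S_{1}}S(\lambda)$ is $9$-dimensional and cannot be spanned by the $3$ proposed vectors --- the three distinct bottom-cap configurations, which your manipulation merges, remain independent. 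The displayed formula is an abuse of notation: the module actually intended (and the one used later, e.g.\ in the proof of Proposition~\ref{xeijdiec}) is the image in $B^k/B^{k+1}$ of the left ideal generated by $1_{\ob{m-2k}}E^k$, tensored over $K\mathfrak S_{m-2k}$ with $S(\lambda)$; there the bottom half-diagram is fixed once and for all, no $d_2$ ever appears, and spanning follows from the easy observation that left multiplication of $d\circ(w\otimes 1_{\ob{2k}})\circ(1_{\ob{m-2k}}E^k)$ by a Brauer diagram either creates a further arc (hence vanishes modulo $B^{k+1}$) or yields a power of $\delta_0$ times another element of the same shape, with $w$ then absorbed into $S(\lambda)$; your dimension count finishes it. (Also note $1_{\ob{m-2k}}E^k$ is only quasi-idempotent, $E^k\circ E^k=\delta_0^kE^k$, and is nilpotent when $\delta_0=0$, so calling it an idempotent is a further small slip.) In short, your proposal both misreads the module being presented and rests on an invalid sliding step; the correct route avoids the $d_2$-bookkeeping entirely.
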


\begin{Prop}\label{xeijdiec} Suppose $\mu\in\Lambda$ and $\lambda\in \Lambda_p$.
\begin{itemize}
\item [(1)] $1_{\ob m}\tilde\Delta(\mu)\neq 0$ only if $\mu\in\Lambda^+(m)$, in which case
$1_{\ob m}\tilde\Delta(\mu)\cong \Delta_m(\mu)$ as $B_m(\delta_0)$-modules.
\item [(2)]$1_{\ob m } L(\lambda)\neq0$ if and only if $ \lambda\in\tilde\Lambda_p^+(m)$, where
$$\tilde\Lambda_p^+(m)=\left\{
                      \begin{array}{ll}
                         \Lambda_p^+(m)\setminus\{\emptyset\}, & \hbox{ if $m$ is positive even and $\delta_0=0$;} \\
                         \Lambda_p^+(m), & \hbox{otherwise.}
                      \end{array}
                    \right.
 $$
\item [(3)] If $\lambda\in \tilde\Lambda_p^+(m)$ and $\mu \in\Lambda^+(m) $,  then $[\tilde \Delta(\mu):L(\lambda)]=[1_{\ob m} \tilde \Delta(\mu): 1_{\ob m} L(\lambda)]$, and   $1_{\ob m}L(\lambda)$ is the simple head of $1_{\ob m} \tilde \Delta(\lambda)$.
\end{itemize}
\end{Prop}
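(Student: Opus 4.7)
The approach for part (1) starts from the triangular decomposition. Since $\tilde\Delta(\mu)=B\otimes_{B^\sharp}S(\mu)$ and $B\cong B^-\otimes_{\mathbb K}B^\sharp$ as right $B^\sharp$-modules by Lemma~\ref{triangularde}, we get $\tilde\Delta(\mu)\cong B^-\otimes_{\mathbb K}S(\mu)$, which truncated at $\ob m$ and using that $S(\mu)$ is supported at $\ob n$ for $n=|\mu|$, yields $1_{\ob m}\tilde\Delta(\mu)=1_{\ob m}B^-1_{\ob n}\otimes_K S(\mu)$. Lemma~\ref{useifcats}(1) then forces $n=m-2k$ for some $k\geq 0$, i.e.\ $\mu\in\Lambda^+(m)$. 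To identify this with $\Delta_m(\mu)=B^k/B^{k+1}\otimes_{K\mathfrak S_{m-2k}}S(\mu)$, I will construct a map $\psi$ sending the Graham--Lehrer basis element $d\circ(1_{\ob{m-2k}}E^k)\otimes v$ to $(d\circ u_k)\otimes_{B^\sharp}v$, where $u_k:=1_{\ob{m-2k}}\otimes U^{\otimes k}\in 1_{\ob m}B^-1_{\ob{m-2k}}$. Well-definedness modulo $B^{k+1}$ follows from the factorization $E^k=U^{\otimes k}\circ A^{\otimes k}$ and the vanishing of the $B^+[d]$-action on $S(\mu)$ for $d>0$; the $B_m(\delta_0)$-equivariance is direct from the definitions. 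Bijectivity reduces to matching bases via Theorem~\ref{basisofb}, since equivalence classes of $B^-$-diagrams from $\ob{m-2k}$ to $\ob m$ (no caps, no crossings among vertical strands) are in natural bijection with $D_{k,m}$.

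For part (3), I first show that $\tilde\Delta(\lambda)$ has simple head $L(\lambda)$ when $\lambda\in\Lambda_p$, by the argument of Theorem~\ref{hd1}(1): the shortest word space $1_{\ob n}\tilde\Delta(\lambda)=S(\lambda)$ has simple head $D(\lambda)$, and any proper submodule of $\tilde\Delta(\lambda)$ intersects $1_{\ob n}\tilde\Delta(\lambda)$ inside $\operatorname{rad}S(\lambda)$, producing a unique maximal proper submodule whose quotient must be $L(\lambda)$ since $\tilde\Delta(\lambda)\twoheadrightarrow \bar\Delta(\lambda)\twoheadrightarrow L(\lambda)$. Since the truncation $V\mapsto 1_{\ob m}V$ from $B$-lfdmod to $B_m(\delta_0)$-mod is exact and sends simples either to zero or to simples (and commutes with taking heads), combining with the isomorphism $1_{\ob m}\tilde\Delta(\lambda)\cong \Delta_m(\lambda)$ from (1) identifies $1_{\ob m}L(\lambda)$ with the simple head of $\Delta_m(\lambda)$. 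The multiplicity identity $[\tilde\Delta(\mu):L(\lambda)]=[1_{\ob m}\tilde\Delta(\mu):1_{\ob m}L(\lambda)]$ is the standard consequence: composition factors of $\tilde\Delta(\mu)$ whose $1_{\ob m}$-component vanishes are killed by truncation, while those surviving remain simple and pairwise non-isomorphic for distinct $\lambda,\lambda'\in\tilde\Lambda_p^+(m)$.

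Part (2) then follows. If $1_{\ob m}L(\lambda)\neq 0$ with $\lambda\in\Lambda_p(n)$, then since $L(\lambda)$ is a quotient of $\bar\Delta(\lambda)$ by Theorem~\ref{hd1} and $1_{\ob m}\bar\Delta(\lambda)\neq 0$ requires $m=n+2d$ with $d\geq 0$ by Lemma~\ref{useifcats}, we get $\lambda\in\Lambda_p^+(m)$; the exceptional case $(\lambda,\delta_0)=(\emptyset,0)$ with $m>0$ is excluded by Lemma~\ref{omehga0}, which gives $L(\emptyset)=K$ with trivial $1_{\ob m}$-component for $m>0$. Conversely, for $\lambda\in\tilde\Lambda_p^+(m)$, part (3) identifies $1_{\ob m}L(\lambda)$ with the simple head of the Brauer cell module $\Delta_m(\lambda)$, which by the classical cellular structure of $B_m(\delta_0)$ is known to be nonzero precisely on $\tilde\Lambda_p^+(m)$ (i.e.\ nonzero unless $\delta_0=0$, $m>0$ even, and $\lambda=\emptyset$, in which case the cellular form on $\Delta_m(\emptyset)$ vanishes identically).

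The main obstacle lies in part (1): carefully verifying well-definedness of $\psi$ on $B^k/B^{k+1}\otimes_{K\mathfrak S_{m-2k}}S(\mu)$, so that elements of $B^{k+1}$ really do map to zero after composition with $u_k$ and passage through $B^\sharp$, and matching the coset combinatorics of $D_{k,m}$ with the equivalence-class basis of $1_{\ob m}B^-1_{\ob{m-2k}}$ from Theorem~\ref{basisofb}. Once this is in hand, parts (2) and (3) follow cleanly from the head computation, the exactness of the idempotent truncation $1_{\ob m}\cdot$, and the classical representation theory of Brauer algebras.
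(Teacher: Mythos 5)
Your part (1), the ``only if'' direction of (2), and the multiplicity identity in (3) follow essentially the paper's route (the paper writes down the same basis matching for (1), with the isomorphism going in the opposite direction, $d\circ(1_{\ob{m-2k}}U^{\otimes k})\otimes x\mapsto d\circ(1_{\ob{m-2k}}E^k)\otimes x$), and your observation that $\tilde\Delta(\lambda)$ has simple head $L(\lambda)$ for $p$-regular $\lambda$ is correct. The genuine gap is in the converse (``if'') direction of (2), which you derive from (3). Your proof of the head statement in (3) rests on the claim that the exact truncation $V\mapsto 1_{\ob m}V$ ``commutes with taking heads''. This is false in general: for an idempotent $e$ and a module $V$ with simple head $L$, every proper $eAe$-submodule of $eV$ lies in $e\operatorname{rad}V$, so the head of $eV$ is $eL$ \emph{provided} $eL\neq 0$; but if $eL=0$ then $e\operatorname{rad}V=eV$ and the head of $eV$ is some other simple (take the path algebra of the quiver $1\to 2$, $V=P(1)$, $e=e_2$). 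Consequently, your identification of $1_{\ob m}L(\lambda)$ with the simple head of $\Delta_m(\lambda)$ presupposes exactly the nonvanishing $1_{\ob m}L(\lambda)\neq 0$ that the ``if'' direction of (2) is supposed to establish; as written, the argument is circular. (A secondary slip: for $\lambda\notin\tilde\Lambda_p^+(m)$ the cell module does not have ``zero head'' --- a nonzero finite-dimensional module always has nonzero head; the classical fact is about which cell modules have the cellular simple quotient.)

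The missing ingredient is the counting argument the paper uses: since $1_{\ob m}$ is an idempotent, $\{1_{\ob m}L(\lambda)\neq 0\mid \lambda\in\Lambda_p\}$ is a complete irredundant set of simple $B_m(\delta_0)$-modules; your ``only if'' direction together with Lemma~\ref{omehga0} confines the labels with $1_{\ob m}L(\lambda)\neq 0$ to $\tilde\Lambda_p^+(m)$; and the classical classification of simple Brauer algebra modules says their number is exactly $|\tilde\Lambda_p^+(m)|$, so the inclusion of label sets must be an equality. With (2) established this way, the head assertion of (3) then follows either from your general observation (now legitimate, since $1_{\ob m}L(\lambda)\neq0$) or, as in the paper, simply by applying $1_{\ob m}$ to the epimorphism $\tilde\Delta(\lambda)\twoheadrightarrow L(\lambda)$.
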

\begin{proof}Suppose $\mu\in\Lambda(n)$.
Recall  $\tilde \Delta(\mu)=B\otimes _{B^\sharp} S(\mu)$. By \eqref{zmmzzm},
$1_{\ob m}\tilde \Delta(\mu)\neq 0$  only if $ m=n+2k$.
When  $m=n+2k$,
$1_{\ob m}\tilde \Delta(\mu)= 1_{\ob m}B^-\otimes _{\mathbb K} S(\mu)=1_{\ob m}B^-[k]1_{\ob n}\otimes _{\mathbb K} S(\mu)$. This together with
 Theorem~\ref{basisofb} yields that  $1_{\ob m}\tilde \Delta(\mu)$
has a basis $ \{d \circ  (1_{\ob {m-2k}} U^{\otimes k})\otimes x\mid d\in D_{k,m}, x\in \mathcal S_\mu  \}$, where $1_{\ob {m-2k}} U^{\otimes k}= 1_{\ob {m-2k}}\otimes  U^{\otimes k}\in 1_{\ob m}B^- 1_{\ob {m-2k}}$ (see \eqref{com1}).
Then one can check that  the required isomorphism in (1) is the
 $K$-linear isomorphism sending
$d \circ  (1_{\ob {m-2k}} U^{\otimes k})\otimes x$ to  $d \circ  (1_{\ob {m-2k}} E^k)\otimes x$ for all $d\in D_{k,m}, x\in \mathcal S_\mu$.

Since $1_{\ob m}$ is an idempotent, the set $\{1_{\ob m}L(\lambda)\neq0\mid \lambda\in \Lambda_p\}$ consists of a complete set of all non-isomorphic simple modules of $1_{\ob m}B1_{\ob m}$.
Suppose $\lambda\in\Lambda_p(n)$. If  $1_{\ob m}L(\lambda)\neq 0$, then $1_{\ob m}\bar \Delta(\lambda)\neq 0$, which is available only if  $m=n+2d$ for some $d\in \mathbb N$.
So,   $\lambda \in\Lambda_p^+(m)$.
Thanks to Lemma~\ref{omehga0}, we have $1_{\ob m}L(\emptyset)=0$ if $\delta_0=0$ and $m>0$.
Now, (2) follows from a well-known result that there is a bijection between  $\tilde\Lambda_p^+(m)$ and the set of pairwise non-isomorphic simple modules for Brauer algebra $B_m(\delta_0)$.

  There is an epimorphism  $\tilde\Delta(\lambda)\twoheadrightarrow L(\lambda)$. If $1_{\ob m} L(\lambda)\neq 0$, then  it induces a non-zero  homomorphism from $1_{\ob m}\tilde\Delta(\lambda)$ to  $1_{\ob m}L(\lambda)$. This proves the last assertion in  (3). The first one follows  immediately from (1)--(2) since  $1_{\ob m}$ is an idempotent element.
\end{proof}

\begin{Theorem}\label{semi}
$B$ is semisimple   if and only if $\delta_0\notin\mathbb Z1_K $ and $p=0$.
\end{Theorem}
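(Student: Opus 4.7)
For the sufficiency direction, I would invoke Corollary~\ref{sjdsdh}(2): under the hypothesis $\delta_0 \notin \mathbb Z 1_K$, the standardization functor $\Delta: B^0\text{-mod}\to B\text{-mod}$ is an equivalence of categories. When moreover $p=0$, Maschke's theorem applied to $B^0\cong\bigoplus_{m\in\mathbb N}K\mathfrak S_m$ shows that $B^0$ is semisimple, and the equivalence transports semisimplicity from $B^0$-mod to $B$-mod.

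For the necessity direction, assume $B$ is semisimple. First I show $p=0$. In a semisimple category every simple object is projective, so $P(\lambda)=L(\lambda)$ for all $\lambda\in\Lambda_p$ by uniqueness of projective covers. By Proposition~\ref{dektss}, $P(\lambda)$ has a finite $\Delta$-flag with $\Delta(\lambda)$ as top section; the simplicity of $P(\lambda)$ forces this flag to collapse to the single term $\Delta(\lambda)=L(\lambda)$, and since $\bar\Delta(\lambda)$ is a non-zero quotient of $\Delta(\lambda)=L(\lambda)$ we likewise obtain $\bar\Delta(\lambda)=L(\lambda)$. Thus $\Delta(Y(\lambda))=\Delta(D(\lambda))=L(\lambda)$. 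The functor $\Delta$ is exact (Lemma~\ref{ddheuhd}) and conservative: by the triangular decomposition (Lemma~\ref{triangularde}), if $V$ is non-zero and concentrated in weight $m$ then $1_{\ob m}\Delta(V)\supseteq 1_{\ob m}B^-1_{\ob m}\otimes_{\mathbb K}V=V\neq 0$. Applying $\Delta$ to the short exact sequence $0\to K\to Y(\lambda)\to D(\lambda)\to 0$ and using $\Delta(Y(\lambda))=\Delta(D(\lambda))$ therefore forces $K=0$, so $Y(\lambda)=D(\lambda)$ for every $\lambda\in\Lambda_p$. Consequently every simple $K\mathfrak S_m$-module coincides with its projective cover, which means $K\mathfrak S_m$ is semisimple for every $m\in\mathbb N$, forcing $p=0$.

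To complete necessity I show $\delta_0\notin\mathbb Z 1_K$. Since $1_{\ob m}$ is idempotent in $B$, semisimplicity of $B$ descends to every truncation $1_{\ob m}B1_{\ob m}\cong B_m(\delta_0)$ (equivalently, by Proposition~\ref{xeijdiec}(1) each cell module $\Delta_m(\mu)\cong 1_{\ob m}\tilde\Delta(\mu)$ inherits semisimplicity from the ambient $B$-module $\tilde\Delta(\mu)$). But the classical semisimplicity criterion for Brauer algebras, due to Wenzl and Rui, asserts that when $\delta_0\in\mathbb Z$ in characteristic zero, $B_m(\delta_0)$ fails to be semisimple for sufficiently large $m$. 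Hence $\delta_0\notin\mathbb Z 1_K$.

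The main obstacle is this last step, where I invoke an external non-semisimplicity result for Brauer algebras at integer parameters. A fully self-contained substitute is immediate for $\delta_0=0$: Lemma~\ref{omehga0} gives $L(\emptyset)=K$ one-dimensional, while $1_{\ob{2k}}\Delta(\emptyset)=1_{\ob{2k}}B^-1_{\ob 0}\neq 0$ for all $k\geq 1$, contradicting the identity $\Delta(\emptyset)=L(\emptyset)$ established in the necessity argument. For general $\delta_0\in\mathbb Z$ one can combine the path-content character formula (Proposition~\ref{charcter}) with Corollary~\ref{twopath} to exhibit two distinct partitions admitting paths from $\emptyset$ of equal content, and then extract a non-trivial composition multiplicity inside some $\bar\Delta(\nu)$, again contradicting $\bar\Delta(\nu)=L(\nu)$.
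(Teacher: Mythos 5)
Your proof is correct, and its overall skeleton (Corollary~\ref{sjdsdh} plus Maschke for sufficiency, the Wenzl--Rui criterion for excluding $\delta_0\in\mathbb Z1_K$) coincides with the paper's. Where you genuinely diverge is in how $p=0$ is forced: the paper never argues this directly, but instead splits into cases on $\delta_0$ --- for $\delta_0\in\mathbb Z 1_K$ it quotes \cite{Wen,Rui} in arbitrary characteristic $p\neq 2$, finds a cell module $\Delta_m(\mu)$ that is not completely reducible, and lifts this through Proposition~\ref{xeijdiec} to conclude $\tilde\Delta(\mu)$ is not a sum of irreducibles; for $\delta_0\notin\mathbb Z1_K$ it reads off $p=0$ from the equivalence $B\text{-mod}\simeq B^0\text{-mod}$. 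Your argument that semisimplicity of $B$ alone forces $p=0$ (semisimplicity gives $P(\lambda)=L(\lambda)$, the finite $\Delta$-flag of Proposition~\ref{dektss} collapses to $\Delta(\lambda)=\bar\Delta(\lambda)=L(\lambda)$, and exactness plus conservativity of $\Delta$ --- via $1_{\ob m}B^-1_{\ob m}=K1_{\ob m}$ --- forces $Y(\lambda)=D(\lambda)$, i.e.\ all $K\mathfrak S_m$ semisimple) is a uniform, self-contained step not present in the paper, and it buys you something concrete: having $p=0$ in hand, you only need the characteristic-zero Brauer-algebra criterion, whereas the paper's case split requires the criterion over any field of characteristic $p\neq2$. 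Your descent of semisimplicity to the idempotent truncations $1_{\ob m}B1_{\ob m}\cong B_m(\delta_0)$ is legitimate (every $B_m(\delta_0)$-module is of the form $1_{\ob m}M$ via $M=B1_{\ob m}\otimes_{1_{\ob m}B1_{\ob m}}N$), and your parenthetical via Proposition~\ref{xeijdiec}(1) is exactly the paper's lifting mechanism run in the contrapositive direction.

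One caveat: your proposed ``fully self-contained substitute'' for general $\delta_0\in\mathbb Z$ does not work as sketched. Corollary~\ref{twopath} is only a necessary condition --- $[\tilde\Delta(\lambda):L(\mu)]\neq0$ implies the existence of two equal-content paths --- so exhibiting two distinct partitions with equal-content paths from $\emptyset$ does not by itself produce a nontrivial composition multiplicity in some $\bar\Delta(\nu)$. Since this sketch is offered only as an optional replacement for the citation of \cite{Wen,Rui} (on which the paper itself relies), it does not affect the validity of your main argument, but as written it should not be presented as a proof.
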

\begin{proof} Suppose  $\delta_0\notin\mathbb Z1_K $.
Thanks to Corollary ~\ref{sjdsdh},  $B$-mod is equivalent to $B^0$-mod. So, $B$  is semisimple   if and only if  $p=0$.
If  $\delta_0=n\in \mathbb Z$, thanks to the results on semisimplicity of  Brauer algebras in \cite{Wen, Rui},   $B_{m}(n)$ is not semisimple for some  $m$. In this case, one can find a cell module, say $\Delta_m(\mu)$,  for  $B_{m}(n)$,  which is not completely reducible. By Proposition~\ref{xeijdiec}, $\tilde\Delta(\mu)$ can not be written as a direct sum of irreducible modules. So,
 $B$ is not semisimple.
\end{proof}

\section{Lie algebra action  on the Grothendieck Group}
In this section, we keep the assumption that $K$ is a field with characteristic $p\neq 2$.  Recall that $I_0=\frac{\delta_0-1}{2}+\mathbb Z1_K$.  Consider the  Cartan matrix $(a_{i,j})_{i,j\in I_0}$
such that \begin{equation}
a_{i,j}=\begin{cases}
            2, & \text{if $i=j$. } \\
            -1, & \text{if $|i-j|= 1$,} \\
            0, & \text{otherwise.}
          \end{cases}
\end{equation}
 Let $\mathfrak {sl}_{K}$ be the  Kac-Moody algebra associated to the  Cartan matrix $(a_{i,j})_{i,j\in I_0}$ over $\mathbb C$. Then  \begin{equation} \label{lial} \mathfrak{sl}_K\cong\begin{cases} \mathfrak {sl}_{\infty}, & \text{if $p=0$,}\\
 \widehat{\mathfrak {sl}}_{p}, &\text{if $2\nmid p$.}\\ \end{cases}\end{equation}
  Let $\mathfrak h$ be the Cartan subalgebra of $\mathfrak{sl}_K$. Define $h_i=[e_i,f_i]$, where $\{e_i, f_i\mid i\in I_0\}$ is  the set of usual Chevalley generators of  $\mathfrak {sl}_{K}$.  Throughout, we fix the  notations as follows:
\begin{itemize}
\item $P= \{\lambda\in\mathfrak h^*\mid \langle h_i,\lambda \rangle\in \mathbb Z,  \forall i\in I_0\}$,   weight lattice,
 \item $\Pi=\{\alpha_i\mid i\in I_0\}$, the set of simple roots,
 \item $Q=\sum_{i\in I_0}\mathbb Z \alpha_i$, root lattice,
 \item $ \varpi_i$, $i\in I_0 $,   fundamental dominant weights,
 \item $\leq$ ,   the usual dominance order on $P$ such that $\lambda\leq \mu$ if $\mu-\lambda\in \mathbb N \Pi$.
\end{itemize}

 Suppose $\delta_0\in \mathbb Z1_K$.
 If $p=0$, then $I_0= \mathbb Z$ (resp., $ \mathbb Z+\frac{1}{2}$) if $\delta_0$ is odd (resp., even).
 If $p>0$, we say $\delta_0$ is even (resp., odd) if $\delta_0\equiv 2a~(\text{mod $p$})$ (resp., $\delta_0\equiv 2a+1~(\text{mod $p$})$)
 for some $a\in \mathbb Z$ such that  $2a\in\mathbb Z_p=\{0,1,\ldots,p-1\}$ (resp., $2a+1\in \mathbb Z_p$).
   Then
$I_0=\mathbb Z_p 1_K +\frac{1}{2} \bar\delta_0$, where  $\bar\delta_0$ is $1$ if $\delta_0$ is even and $0$ if  $\delta_0 $ is odd. If $p\neq 0$ we  write $p=2r+1$ since we are assuming that $p\neq 2$.
Consider the graph automorphism $\theta$ of Dynkin diagram associated to $I_0$ as in Figures 1--4 as follows.
 Then   $\theta$ sends the $i$th vertex to $j$th vertex if $j\equiv -i\pmod p$. Following \cite[(4.19)]{KW}, it
 induces  an automorphism of $\mathfrak {sl}_{K} $, denoted by $\theta$  such that
$\theta(e_i)=e_{\theta(i)}$, $\theta(f_i)=f_{\theta(i)}$, $\theta(h_i)=h_{\theta(i)}$ and
 $\langle\theta(h),\alpha_{\theta(i)}\rangle=\langle h, \alpha_{i} \rangle$.
Moreover, $\theta$ induces a linear map on $\mathfrak h^*$ such that $\theta(\alpha_i)=\alpha_{\theta(i)}$.

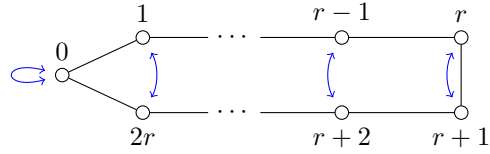
\begin{figure}[ht!]
\caption{Dynkin diagram of type $A^{(1)}_{2r}$ with involution   $\theta$ when  $\delta_0$ is odd and $p>0$.}
   \label{figure:ij}
\begin{tikzpicture}
\matrix [column sep={0.6cm}, row sep={0.5 cm,between origins}, nodes={draw = none,  inner sep = 3pt}]
{
	&\node(U1) [draw, circle, fill=white, scale=0.6, label = 1] {};
	&\node(U3) {$\cdots$};
	&\node(U4)[draw, circle, fill=white, scale=0.6, label =$r-1$] {};
	&\node(U5)[draw, circle, fill=white, scale=0.6, label =$r$] {};
\\
	\node(L)[draw, circle, fill=white, scale=0.6, label =0] {};
	&&&&&
\\
	&\node(L1) [draw, circle, fill=white, scale=0.6, label =below:$2r$] {};
	&\node(L3) {$\cdots$};
	&\node(L4)[draw, circle, fill=white, scale=0.6, label =below:$r+2$] {};
	&\node(L5)[draw, circle, fill=white, scale=0.6, label =below:$r+1$] {};
\\
};
\begin{scope}
\draw (L) -- node  {} (U1);
\draw (U1) -- node  {} (U3);
\draw (U3) -- node  {} (U4);
\draw (U4) -- node  {} (U5);
\draw (U5) -- node  {} (L5);
\draw (L) -- node  {} (L1);
\draw (L1) -- node  {} (L3);
\draw (L3) -- node  {} (L4);
\draw (L4) -- node  {} (L5);
\draw (L) edge [color = blue, loop left, looseness=40, <->, shorten >=4pt, shorten <=4pt] node {} (L);
\draw (L1) edge [color = blue,<->, bend right, shorten >=4pt, shorten <=4pt] node  {} (U1);
\draw (L4) edge [color = blue,<->, bend left, shorten >=4pt, shorten <=4pt] node  {} (U4);
\draw (L5) edge [color = blue,<->, bend left, shorten >=4pt, shorten <=4pt] node  {} (U5);
\end{scope}
\end{tikzpicture}
\end{figure}

\begin{figure}[ht!]
\caption{Dynkin diagram of type $A^{(1)}_{2r}$ with involution   $\theta$ when $\delta_0$ is even and $p>0$.}
   \label{figure:ji2}
\begin{tikzpicture}
\matrix [column sep={0.6cm}, row sep={0.5 cm,between origins}, nodes={draw = none,  inner sep = 3pt}]
{
	\node(U1) [draw, circle, fill=white, scale=0.6, label = $\hf$] {};
	&\node(U2)[draw, circle, fill=white, scale=0.6, label =$\frac{3}{2}$] {};
	&\node(U3) {$\cdots$};
	&\node(U5)[draw, circle, fill=white, scale=0.6, label =$r-\hf$] {};
\\
	&&&&
	\node(R)[draw, circle, fill=white, scale=0.6, label =$r+\hf$] {};
\\
	\node(L1) [draw, circle, fill=white, scale=0.6, label =below:$2r+\hf$] {};
	&\node(L2)[draw, circle, fill=white, scale=0.6, label =below:$2r-\hf$] {};
	&\node(L3) {$\cdots$};
	&\node(L5)[draw, circle, fill=white, scale=0.6, label =below:$r+\frac{3}{2}$] {};
\\
};
\begin{scope}
\draw (U1) -- node  {} (U2);
\draw (U2) -- node  {} (U3);
\draw (U3) -- node  {} (U5);
\draw (U5) -- node  {} (R);
\draw (U1) -- node  {} (L1);
\draw (L1) -- node  {} (L2);
\draw (L2) -- node  {} (L3);
\draw (L3) -- node  {} (L5);
\draw (L5) -- node  {} (R);
\draw (R) edge [color = blue,loop right, looseness=40, <->, shorten >=4pt, shorten <=4pt] node {} (R);
\draw (L1) edge [color = blue,<->, bend right, shorten >=4pt, shorten <=4pt] node  {} (U1);
\draw (L2) edge [color = blue,<->, bend right, shorten >=4pt, shorten <=4pt] node  {} (U2);
\draw (L5) edge [color = blue,<->, bend left, shorten >=4pt, shorten <=4pt] node  {} (U5);
\end{scope}
\end{tikzpicture}
\end{figure}

\begin{figure}[ht!]
\caption{Dynkin diagram of type $A_{\infty}$ with involution   $\theta$ when $\delta_0$ is odd and $p=0$.}
   \label{figure:ji3}
\begin{tikzpicture}
 \draw[dotted] (-1,0) --(0,0);\draw[dotted] (6,0) --(7,0);
 \draw[dotted]  (0.5,0) node[below]  {$ -m+\hf$} -- (2.5,0) node[below]  {$-\hf$} ;
 \draw (2.5,0)
 -- (3.5,0) node[below]  {$\hf$};
 \draw[dotted] (3.5,0) -- (5.5,0) node[below] {$m-\hf$} ;
\draw (0.5,0) node (-m) {$\bullet$};
 \draw (2.5,0) node (-1) {$\bullet$};
\draw (3.5,0) node (1) {$\bullet$};
\draw (5.5,0) node (m) {$\bullet$};
\draw[<->][color = blue] (-m.north east) .. controls (3,1) .. node[above] {$\theta$} (m.north west) ;
\draw[<->] [color = blue](-1.north) .. controls (3,0.5) ..  (1.north) ;
\end{tikzpicture}
\end{figure}

\begin{figure}[ht!]
\caption{Dynkin diagram of type $A_{\infty}$ with involution   $\theta$ when $\delta_0$ is  even and $p=0$.}
   \label{figure:ji4}
\begin{tikzpicture}
 \draw[dotted] (-1,0) --(0,0);\draw[dotted] (6,0) --(7,0);
 \draw[dotted]  (0,0) node[below] {$-m$} -- (2,0) node[below] {$-1$} ;
 \draw (2,0) -- (3,0) node[below]  {$0$}
 -- (4,0) node[below] {$1$};
 \draw[dotted] (4,0) -- (6,0) node[below] {$m$} ;
\draw (0,0) node (-m) {$\bullet$};
 \draw (2,0) node (-1) {$\bullet$};
\draw (3,0) node (0) {$\bullet$};
\draw (4,0) node (1) {$\bullet$};
\draw (6,0) node (m) {$\bullet$};
\draw[<->][color = blue] (-m.north east) .. controls (3,1.5) .. node[above] {$\theta$} (m.north west) ;
\draw[<->] [color = blue](-1.north) .. controls (3,1) ..  (1.north) ;
\draw[<->] (0) edge[color = blue,<->, loop above] (0);
\end{tikzpicture}
\end{figure}


\begin{Defn}\label{prec} Define the partial order $\preceq $ on $P$ such that  $\lambda\preceq \mu$
if  $\bar \lambda=\bar \mu $  and $ \lambda\leq \mu$, where
 $\bar \lambda$ is the image of $\lambda$ in $P_\theta$,  $P_\theta=P/Q^\theta$ and
$Q^\theta=\{ \theta(\mu)+\mu\mid \mu\in Q\}$ (resp.,  $0$) if $\delta_0\in \mathbb Z 1_K$ (resp., $\delta_0\notin \mathbb Z 1_K$).
\end{Defn}

Recall $I=I_0\cup-I_0$ in Lemma~\ref{usuactifuc}(3). Suppose that $\delta_0\in\mathbb Z1_K$. Then  $I_0=-I_0$ and $I=I_0$. Further, for  any $i\in I_0$,   $f_{-i}=f_{j}$ for some $j\in I_0$ such that  $-i\equiv j (\text{mod } p)$.
\begin{Defn} \label{liesub}  Let  $\mathfrak g$ be  the Lie  subalgebra  of $\mathfrak {sl}_{K}$ such that
\begin{itemize} \item [(1)]  $ \mathfrak g=\mathfrak {sl}_{K}$ if  $\delta_0\notin\mathbb Z1_K$,
\item [(2)]  $\mathfrak g$ is generated by  $\{e_i+f_{-i}\mid i\in I_0\}$ if $\delta_0\in \mathbb Z1_K$. \end{itemize}
\end{Defn}

Let $K_0(B^0\text{-pmod})$ be the split Grothendieck group of the category of finitely generated projective $B^0$-modules.
Let $[K_0(B^0\text{-pmod}) ]=\mathbb C\otimes_{\mathbb Z}K_0(B^0\text{-pmod}) $ and $ [K_0(B\text{-mod}^{\Delta})]=\mathbb C\otimes_{\mathbb Z}K_0(B\text{-mod}^{\Delta}) $.
Recall functors $E_i, F_i, \tilde E_i$ in Lemmas~\ref{filteration},\ref{usuactifuc}. $E_i, F_i$ (resp., $ \tilde E_i$) induce linear maps on $ [K_0(B^0\text{-pmod}) ]$ (resp.,
$ [K_0(B\text{-mod}^{\Delta})]$). They are also denoted by $E_i, F_i$ and $ \tilde E_i$, respectively.

\begin{Theorem}\label{cateofg} As $\mathfrak g$-modules,  $ [K_0(B\text{-\rm mod}^{\Delta})]\cong V( \varpi_{\frac{\delta_0-1}{2}})$, where $V( \varpi_{\frac{\delta_0-1}{2}})$ is  the integrable highest weight $\mathfrak {sl}_K$-module of highest weight $ \varpi_{\frac{\delta_0-1}{2}}$. It sends  $[\Delta(\emptyset)]$ to  the unique highest weight vector with weight $\varpi_{\frac{\delta_0-1}{2}} $.  Moreover, $\tilde e_i$ acts on  $ [K_0(B\text{-\rm mod}^{\Delta})]$ via
 $\tilde E_i$ for all  $i\in I$, where
 \begin{equation} \tilde e_i =\begin{cases}
                   e_i+f_{-i}, & \text{if $\delta_0\in \mathbb Z1_K $ and  $i\in I_0$, } \\
                   e_i, & \text{if $\delta_0\notin \mathbb Z1_K$ and  $i\in I_0$,}\\
                 f_{-i}, & \text{if $\delta_0\notin \mathbb Z1_K$ and  $i\in -I_0$.} \end{cases}\end{equation}

\end{Theorem}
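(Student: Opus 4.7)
The plan is to transport the problem to the symmetric-group side via the standardization functor $\Delta$, and then to invoke the classical categorification of the basic module due to Leclerc--Lascoux--Thibon and Ariki. From Proposition~\ref{dektss}, Corollary~\ref{xijixs}(1), and the identification $B^0\cong\bigoplus_{m\in\mathbb N}K\mathfrak S_m$ in \eqref{isomosym}, the map $\Delta_*\colon[K_0(B^0\text{-pmod})]\to[K_0(B\text{-mod}^\Delta)]$ sending $[Y(\lambda)]\mapsto[\Delta(\lambda)]$ is a $\mathbb C$-linear isomorphism. Applying the short exact sequence of Lemma~\ref{usuactifuc}(2) to the projective $B^0$-module $Y(\lambda)$ and passing to the Grothendieck group gives
$$\tilde E_i\,[\Delta(\lambda)]\;=\;\Delta_*\bigl([E_iY(\lambda)]+[F_{-i}Y(\lambda)]\bigr),$$
so under $\Delta_*$ the operator $\tilde E_i$ corresponds to $E_i+F_{-i}$. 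Only finitely many such operators act nontrivially on any fixed class, because each $Y(\lambda)$ has only finitely many addable and removable boxes; this gives the local finiteness required of a Lie algebra action.

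By Lemma~\ref{filteration} the operators $E_i,F_i$ ($i\in I_0$) on $[K_0(B^0\text{-pmod})]$ are the components of $i$-restriction and $i$-induction for symmetric groups with spectrum shifted by $\tfrac{\delta_0-1}{2}$. The classical theorem, e.g.\ \cite[Theorem~9.5.1]{Kle}, then asserts that $e_i\mapsto E_i,\ f_i\mapsto F_i$ endows $[K_0(B^0\text{-pmod})]$ with the structure of the integrable highest-weight $\mathfrak{sl}_K$-module $V(\varpi_{\frac{\delta_0-1}{2}})$, with highest-weight vector $[Y(\emptyset)]$. Pulling this back through $\Delta_*$, the element of $\mathfrak{sl}_K$ acting as $\tilde E_i$ on $[K_0(B\text{-mod}^\Delta)]$ is exactly $e_i+f_{-i}$. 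When $\delta_0\notin\mathbb Z1_K$, $I_0$ and $-I_0$ are disjoint, and since every box of any $\lambda$ has content in $I_0$ one has $F_{-i}[Y(\lambda)]=0$ for $i\in I_0$ and $E_i[Y(\lambda)]=0$ for $i\in-I_0$; hence $\tilde e_i$ reduces to $e_i$ or $f_{-i}$ as in the theorem statement, confirming the case $\mathfrak g=\mathfrak{sl}_K$ (consistent with the equivalence $B\text{-mod}\simeq B^0\text{-mod}$ of Corollary~\ref{sjdsdh}).

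In the remaining case $\delta_0\in\mathbb Z1_K$, the diagram automorphism $\theta$ of Figures~1--4 is nontrivial, and $\tilde e_i=e_i+f_{-i}=e_i+\theta(e_i)$ is the $\theta$-symmetrization of $e_i$. The subalgebra of $\mathfrak{sl}_K$ generated by $\{\tilde e_i:i\in I_0\}$ is then the $\theta$-fixed subalgebra, which one checks coincides with $\mathfrak g$ of Definition~\ref{liesub}(2) by a direct inspection of the Chevalley generators in each of the four Dynkin types. The Serre-type relations among the $\tilde e_i$'s hold on our Grothendieck group because they already hold for $e_i,f_i$ on the ambient $\mathfrak{sl}_K$-module. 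To conclude that $[K_0(B\text{-mod}^\Delta)]\cong V(\varpi_{\frac{\delta_0-1}{2}})$ as a $\mathfrak g$-module, I check that $[\Delta(\emptyset)]$ is a highest-weight vector of weight $\varpi_{\frac{\delta_0-1}{2}}$ (immediate from $E_iY(\emptyset)=0$ and the identification of the vacuum), that it cyclically generates the whole space by induction on $|\lambda|$ along paths $\emptyset\rightsquigarrow\lambda$ in the graph $\mathbb B$, and that the weight-multiplicity statement of Proposition~\ref{charcter} matches the character of $V(\varpi_{\frac{\delta_0-1}{2}})$.

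The main obstacle is this last character comparison: one must carefully match the path enumerations of Proposition~\ref{charcter} with the weight-space dimensions of $V(\varpi_{\frac{\delta_0-1}{2}})$ read off from the folded weight lattice $P/Q^\theta$ of Definition~\ref{prec}. The subtlety is concentrated at the $\theta$-fixed vertices of the Dynkin diagram (the vertex $0$ when $\delta_0$ is odd and the vertex $r+\tfrac12$ when $\delta_0$ is even, and analogously the vertex $0$ in the $A_\infty$ case when $\delta_0$ is even), where the symmetrization $e_i+f_{-i}=2e_i$ behaves differently and the $\mathfrak g$-Cartan structure differs from the $\mathfrak{sl}_K$-Cartan structure; once the bookkeeping at these fixed points is settled, integrability is automatic from local finiteness, and the universal property of the irreducible highest-weight module $V(\varpi_{\frac{\delta_0-1}{2}})$ delivers the required isomorphism.
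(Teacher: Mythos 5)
The first half of your argument coincides with the paper's proof: the linear isomorphism $\Delta_*$ from $[K_0(B^0\text{-pmod})]$ to $[K_0(B\text{-mod}^\Delta)]$, the appeal to \cite[Theorem~9.5.1]{Kle} via Lemma~\ref{filteration} to identify $[K_0(B^0\text{-pmod})]$ with $V(\varpi_{\frac{\delta_0-1}{2}})$, and the use of the exact sequence in Lemma~\ref{usuactifuc}(2) to show that $\tilde E_i$ corresponds to $E_i+F_{-i}$ (hence to $e_i$, $f_{-i}$, or $e_i+f_{-i}$ according to the cases) are exactly the paper's steps. The treatment of the case $\delta_0\notin\mathbb Z1_K$ is also fine.

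The gap is in your final step for $\delta_0\in\mathbb Z1_K$. At that point nothing further needs to be proved: the $\mathfrak g$-module structure on $[K_0(B\text{-mod}^\Delta)]$ is, by construction, the restriction to $\mathfrak g\subset\mathfrak{sl}_K$ of the $\mathfrak{sl}_K$-structure transported through the linear isomorphism $\Delta_*$ (i.e.\ one \emph{defines} $e_i[\Delta(\lambda)]=[\Delta(E_iY(\lambda))]$ and $f_i[\Delta(\lambda)]=[\Delta(F_iY(\lambda))]$), so the $\mathfrak{sl}_K$-isomorphism $\phi\circ\Delta_*^{-1}$ restricts immediately to the asserted $\mathfrak g$-isomorphism, and since $\mathfrak g$ is \emph{defined} in Definition~\ref{liesub}(2) as the subalgebra generated by the $e_i+f_{-i}$, the statement about $\tilde e_i$ acting via $\tilde E_i$ is already established by your second paragraph. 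Instead, you set out to reprove the isomorphism intrinsically for $\mathfrak g$ — checking Serre-type relations, cyclic generation from $[\Delta(\emptyset)]$, and a character comparison against ``the character of $V(\varpi_{\frac{\delta_0-1}{2}})$ read off from the folded lattice $P/Q^\theta$'' — and you concede that this character comparison at the $\theta$-fixed vertices remains unsettled. As set up, this route does not close: $\mathfrak g$ in the case $\delta_0\in\mathbb Z1_K$ is a symmetric-pair subalgebra, not a Kac--Moody algebra, so there is no triangular decomposition, no highest-weight theory, and no ``universal property of the irreducible highest-weight module'' available for $V(\varpi_{\frac{\delta_0-1}{2}})$ \emph{as a $\mathfrak g$-module} that would deliver the isomorphism from such data; the paper never needs (and never proves) any such statement. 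A further slip: $f_{-i}$ is not $\theta(e_i)$ (which is $e_{-i}$); the element $e_i+f_{-i}$ is fixed by $\phi\circ\theta$, the composition of $\theta$ with the Chevalley involution, which is the identification the paper makes only in Section~5 in the special case $p=0$, $\delta_0\in\mathbb Z$.
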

\begin{proof}Let $v_{ \varpi_{\frac{\delta_0-1}{2}}}$ be the nonzero highest weight vector of $V( \varpi_{\frac{\delta_0-1}{2}})$ with weight $\varpi_{\frac{\delta_0-1}{2}} $.
Thanks to  Lemma~\ref{filteration}(1) and \cite[Theorem~9.5.1]{Kle}, there is an $\mathfrak{sl}_K$-isomorphism
\begin{equation}\label{nxjsnx}
\phi: [K_0(B^0\text{-pmod}) ]\rightarrow  V( \varpi_{\frac{\delta_0-1}{2}}),
\end{equation} such that $\phi  ( [Y(\emptyset)])= v_{ \varpi_{\frac{\delta_0-1}{2}}}$, and   $e_i$ and $f_i$ act on $[K_0(B^0\text{-pmod}) ]$
 via $E_i$ and  $F_i$  for all $i\in I_0$, respectively.
 The exact functor $\Delta$ gives a linear isomorphism $$\Delta: [K_0(B^0\text{-pmod}) ]\rightarrow [K_0(B\text{-mod}^\Delta) ]$$ which sends $[Y(\lambda)]$ to $[\Delta(\lambda)]$ for all $\lambda\in \Lambda_p$. Define $$e_i[\Delta(\lambda)]=[\Delta(E_i Y(\lambda))] \text{  and
  $f_i[\Delta(\lambda)]=[\Delta(F_i Y(\lambda))]$ for all admissible $i$ and $\lambda$.}$$
 It induces an $\mathfrak {sl}_K$-structure on $ [K_0(B\text{-mod}^\Delta) ]$ and  $\Delta$ is an $\mathfrak{sl}_K$-homomorphism.
 So, $\phi\circ \Delta^{-1}$ is an $\mathfrak{sl}_K$-isomorphism.
 Restricting $\phi\circ \Delta^{-1}$  to $\mfg$ yields the required $\mfg$-isomorphism.
 Thanks to  Lemma~\ref{usuactifuc}, $$ [{\tilde E_i} \Delta(\lambda)]  =[\Delta(E_i Y(\lambda))]+[\Delta(F_{-i} Y(\lambda))] =\begin{cases} (e_i+f_{-i})[\Delta(\lambda)], &\text{if $\delta_0\in \mathbb Z 1_K$ and $i\in I_0$,} \\
 e_i[\Delta(\lambda)], &\text{if $\delta_0\not\in \mathbb Z 1_K$ and $i\in I_0$,} \\
f_{-i}[\Delta(\lambda)], &\text{if $\delta_0\not\in \mathbb Z 1_K$ and $i\in -I_0$.} \\
\end{cases}\\
$$
 In any case, $\tilde e_i$ acts on  $ [K_0(B\text{-mod}^{\Delta})]$ via the endofunctor  $\tilde E_i$ for any $i\in I$.
\end{proof}
\begin{rem} Although $ [K_0(B\text{-mod}^{\Delta})]\cong V( \varpi_{\frac{\delta_0-1}{2}})$ as $\mathfrak{sl}_K$-modules, we cannot say that $ [K_0(B\text{-mod}^{\Delta})]$ categorifies   $\mathfrak{sl}_K$-module  $V( \varpi_{\frac{\delta_0-1}{2}})$. The reason is that the actions of Chevalley generators $e_i$'s and $f_i$'s on  $ [K_0(B\text{-mod}^{\Delta})]$ cannot be explained as certain endofunctors on $B\text{-mod}^{\Delta}$.\end{rem}
\begin{Defn}\label{wt123} For any  $\mu\in \Lambda$, define $
\text{wt}(\mu)=\varpi_{\frac{\delta_0-1}{2}}-\sum_{x\in \mu}\alpha_{c_{\delta_0}(x)}\in P$ and $\text{wt}_0(\mu)=\bar{\text{wt}(\mu)}\in P_\theta$, where
$c_{\delta_0}(x)$ is given in Definition~\ref{defincsomega}.
\end{Defn}

Again the strategy for the following result is to use induction on the length of the path from $\emptyset $ to a partition $\lambda$. This is similar to the case treated for the oriented Brauer category \cite{Re} and oriented skein category \cite{Br}.
\begin{Prop}\label{dedji}
Suppose  $\lambda\in\Lambda$ and $\mu\in\Lambda_p$. If
$[\tilde \Delta(\lambda):L(\mu)] \neq 0$, then   $\text{wt}(\mu) \preceq \text{wt}(\lambda)$.
\end{Prop}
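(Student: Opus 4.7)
The plan is to combine Corollary~\ref{twopath} with a careful bookkeeping of the boxes added and removed along a path. Apply Corollary~\ref{twopath} to obtain two paths $\gamma:\emptyset\rightsquigarrow\lambda$ and $\delta:\emptyset\rightsquigarrow\mu$ of length $m=|\mu|$ with $c(\gamma)=c(\delta)$. Since $\mu\in\Lambda_p(m)$, the path $\delta$ of length $|\mu|$ can only add boxes (it has no removal step), so $c(\delta)$ is the multiset $(c_{\delta_0}(x))_{x\in\mu}$ read in some order. In particular
\[
\sum_{k=1}^{m}\alpha_{(c(\delta))_k}=\sum_{x\in\mu}\alpha_{c_{\delta_0}(x)}.
\]

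For the path $\gamma$, let $a_{(i,j)}$ and $r_{(i,j)}$ denote the number of times the box at position $(i,j)$ is added and removed respectively. Since the path terminates at $\lambda$ one has $a_{(i,j)}-r_{(i,j)}=[(i,j)\in\lambda]$. Summing the roots indexed by $c(\gamma)$ and using the definition of the coloring (addition steps contribute $\alpha_{c_{\delta_0}(x)}$, removal steps contribute $\alpha_{-c_{\delta_0}(x)}$) gives
\[
\sum_{k=1}^m\alpha_{(c(\gamma))_k}=\sum_{(i,j)}a_{(i,j)}\alpha_{c_{\delta_0}(i,j)}+\sum_{(i,j)}r_{(i,j)}\alpha_{-c_{\delta_0}(i,j)}.
\]
Substituting $a_{(i,j)}=r_{(i,j)}+[(i,j)\in\lambda]$ and equating the two displays yields the key identity
\[
\sum_{x\in\mu}\alpha_{c_{\delta_0}(x)}-\sum_{x\in\lambda}\alpha_{c_{\delta_0}(x)}=\sum_{(i,j)}r_{(i,j)}\bigl(\alpha_{c_{\delta_0}(i,j)}+\alpha_{-c_{\delta_0}(i,j)}\bigr),
\]
so that $\mathrm{wt}(\lambda)-\mathrm{wt}(\mu)=\sum_{(i,j)} r_{(i,j)}\bigl(\alpha_{c_{\delta_0}(i,j)}+\alpha_{-c_{\delta_0}(i,j)}\bigr)$.

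It remains to translate this identity into the statement $\mathrm{wt}(\mu)\preceq\mathrm{wt}(\lambda)$, which requires two verifications: membership in $\mathbb{N}\Pi$, and equality in $P_\theta$. If $\delta_0\notin\mathbb{Z}1_K$, then $I_0\cap(-I_0)=\emptyset$ while $c(\delta)\in I_0^m$, forcing $r_{(i,j)}=0$ for every position; hence both sides above vanish and $\mathrm{wt}(\lambda)=\mathrm{wt}(\mu)$, which trivially gives $\mathrm{wt}(\mu)\preceq\mathrm{wt}(\lambda)$ since $P_\theta=P$. If $\delta_0\in\mathbb{Z}1_K$, then $I_0=-I_0$ and the indices $-c_{\delta_0}(i,j)$ are honest elements of $I_0$; each $r_{(i,j)}$ is a nonnegative integer, so the right-hand side lies in $\mathbb{N}\Pi$, giving the dominance $\mathrm{wt}(\lambda)\ge\mathrm{wt}(\mu)$.

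The only subtlety is identifying the summands $\alpha_{c_{\delta_0}(i,j)}+\alpha_{-c_{\delta_0}(i,j)}$ with elements of $Q^\theta$ in order to conclude $\overline{\mathrm{wt}(\lambda)}=\overline{\mathrm{wt}(\mu)}$ in $P_\theta$; for this, observe that under the identification of $I_0$ with $\mathbb{Z}_p+\tfrac{1}{2}\bar\delta_0$ (or with $\mathbb{Z}$, resp.\ $\mathbb{Z}+\tfrac{1}{2}$, when $p=0$), negation on $I_0\subset K$ matches the graph automorphism $\theta$ of the Dynkin diagrams in Figures~\ref{figure:ij}--\ref{figure:ji4}, so $\alpha_{-c_{\delta_0}(i,j)}=\theta(\alpha_{c_{\delta_0}(i,j)})$ and therefore $\alpha_{c_{\delta_0}(i,j)}+\alpha_{-c_{\delta_0}(i,j)}\in Q^\theta$. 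This bookkeeping of the $\theta$-action, rather than the combinatorics of the paths, is the one step that needs to be carried out with care; once this is in hand, the proposition follows.
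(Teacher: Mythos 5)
Your argument is correct and rests on exactly the same ingredients as the paper's proof: Corollary~\ref{twopath}, the observation that the length-$m$ path to $\mu$ consists only of additions, and the fact that each removal along $\gamma$ contributes $\alpha_{c_{\delta_0}(y)}+\alpha_{-c_{\delta_0}(y)}=\alpha_i+\theta(\alpha_i)\in Q^\theta$ (with the non-integral case forcing no removals at all since $I_0\cap(-I_0)=\emptyset$). The only difference is organizational: the paper peels off the last edge and inducts on $m$, whereas you telescope the same bookkeeping into one closed-form count of additions and removals per box, which is equally valid.
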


\begin{proof}
Suppose that $\mu\in\Lambda_p(m)$. Thanks to  Corollary ~\ref{twopath}, there are two  paths $\gamma:\emptyset\rightsquigarrow \lambda$ and  $\delta: \emptyset \rightsquigarrow\mu$ of length
$m$ such that  $c(\gamma)=c(\delta)$.
 We are going to prove the result  by induction on $m$.

 If $m=0$, then $\lambda=\mu=\emptyset$ and there is nothing to prove.  Otherwise, we have $m>0$. Removing the last edge in  both $\gamma$  and $\delta$ yields two shorter paths $\gamma': \emptyset\rightsquigarrow \lambda'$
and $\delta': \emptyset \rightsquigarrow \mu'$ such that $c(\gamma')=c(\delta')$.
There are two cases we need to consider.

First, we assume that  $\lambda$ is obtained from $\lambda'$ by adding a box $x$. Thanks to Definition~\ref{wt123}, $\text{wt}(\lambda')=\text{wt}(\lambda)+\alpha_{c_{\delta_0(x)}}$. Since $\mu\in \Lambda(m)$,
$\text{wt}(\mu')=\text{wt}(\mu)+\alpha_{c_{\delta_0(x)}}$.
By induction assumption, we have  $\text{wt}(\mu') \preceq \text{wt}(\lambda')$, forcing   $\text{wt}(\mu) \preceq \text{wt}(\lambda)$.

Finally we assume that   $\lambda$ is obtained from $\lambda'$ by removing a box $y$.
Then $\text{wt}(\lambda')=\text{wt}(\lambda)-\alpha_{c_{\delta_0(y)}}$ and $\text{wt}(\mu')=\text{wt}(\mu)+\alpha_{-c_{\delta_0(y)}}$.
 By induction assumption, we have
 $$\text{wt}(\lambda)-\text{wt}(\mu)=\text{wt}(\lambda')-\text{wt}(\mu')+\alpha_{c_{\delta_0(y)}}+\alpha_{-c_{\delta_0(y)}}\in \mathbb N\Pi \text{ and }\overline{\text{wt}(\lambda)}=\overline{\text{wt}(\mu)}.$$
 Hence  $\text{wt}(\mu) \preceq \text{wt}(\lambda)$.
We remark that the second case happens only when  $\delta_0\in\mathbb Z1_K$.
\end{proof}
\begin{Cor}\label{hlink}
Suppose  $\lambda,\mu\in\Lambda_p$. If  $L(\lambda)$ and $L(\mu)$ are in the  same block of $B$-mod,  then $\text{wt}_0(\lambda)=\text{wt}_0(\mu)$.
\end{Cor}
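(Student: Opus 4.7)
The plan is to reduce the block statement to the following key claim: if $L(\mu)$ is a composition factor of the projective cover $P(\lambda)$, then $\text{wt}_0(\lambda)=\text{wt}_0(\mu)$. Once this is established, the corollary follows because $L(\lambda)$ and $L(\mu)$ lie in the same block if and only if they can be connected by a chain of simple modules in which consecutive pairs both occur as composition factors of some indecomposable (equivalently, indecomposable projective), and then transitivity of the equality $\text{wt}_0(\,\cdot\,)=\text{wt}_0(\,\cdot\,)$ does the job. For the reduction to indecomposable projectives I would invoke Proposition~\ref{usdeks}(5) together with Proposition~\ref{usdeks}(6), which ensures that every simple has a projective cover and that composition multiplicities in locally finite dimensional modules are detected by Hom into projectives.

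Next I would prove the key claim using the $\tilde\Delta$-flag of $P(\lambda)$ furnished by Corollary~\ref{ijxxexeu}. Concretely, $P(\lambda)$ admits a finite filtration whose sections are $\tilde\Delta(\nu)$ for various $\nu\in\coprod_{d\ge 0}\Lambda(m-2d)$, each appearing with multiplicity $(P(\lambda):\tilde\Delta(\nu))=[\tilde\Delta(\nu):L(\lambda)]$. Any composition factor $L(\mu)$ of $P(\lambda)$ therefore appears as a composition factor of some $\tilde\Delta(\nu)$ with $[\tilde\Delta(\nu):L(\lambda)]\neq 0$ and $[\tilde\Delta(\nu):L(\mu)]\neq 0$. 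Applying Proposition~\ref{dedji} twice gives $\text{wt}(\lambda)\preceq\text{wt}(\nu)$ and $\text{wt}(\mu)\preceq\text{wt}(\nu)$, which by the very definition of $\preceq$ (Definition~\ref{prec}) forces
\[
\text{wt}_0(\lambda)=\text{wt}_0(\nu)=\text{wt}_0(\mu).
\]

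To finish, I would spell out the chain argument: assume $L(\lambda)$ and $L(\mu)$ lie in the same block and choose a sequence $\lambda=\lambda_0,\lambda_1,\dots,\lambda_k=\mu$ in $\Lambda_p$ such that for each $i$ both $L(\lambda_i)$ and $L(\lambda_{i+1})$ are composition factors of a common indecomposable projective $P(\gamma_i)$. Applying the key claim to each $P(\gamma_i)$ yields $\text{wt}_0(\lambda_i)=\text{wt}_0(\gamma_i)=\text{wt}_0(\lambda_{i+1})$, and telescoping gives $\text{wt}_0(\lambda)=\text{wt}_0(\mu)$.

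The main obstacle I anticipate is a bookkeeping one rather than a mathematical one: verifying cleanly that the block equivalence on simples can indeed be generated by ``common composition factors of an indecomposable projective'' in the locally unital, locally finite dimensional setting of $B$. This is standard for finite dimensional algebras, but in our context it requires appealing carefully to Propositions~\ref{usdeks}(4)--(6) (Krull--Schmidt, existence of projective covers, and finiteness of Hom/composition multiplicities) so that any indecomposable summand of a finitely generated module is captured by a single $P(\gamma)$. Given the framework already assembled in \S2, this verification is routine; the genuinely new content is the bridge between block linkage and weights, which the $\tilde\Delta$-flag together with Proposition~\ref{dedji} supplies at once.
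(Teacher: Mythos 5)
Your proposal is correct and takes essentially the same route as the paper: the paper likewise reduces to showing that $\Hom_B(P(\lambda),P(\mu))\neq 0$ forces $\text{wt}_0(\lambda)=\text{wt}_0(\mu)$, using the $\tilde\Delta$-flag of Corollary~\ref{ijxxexeu} together with the identity $[P(\mu):L(\lambda)]=\sum_{\nu}[\tilde\Delta(\nu):L(\mu)][\tilde\Delta(\nu):L(\lambda)]$ and two applications of Proposition~\ref{dedji}. Your explicit chain/linkage argument for passing from ``same block'' to ``linked through a common indecomposable projective'' only spells out what the paper leaves implicit.
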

\begin{proof}
If $\Hom_B(P(\lambda),P(\mu))\neq0$, then $ [P(\mu):L(\lambda)]\neq0$. Thanks to
Corollary ~\ref{ijxxexeu}, $P(\mu)$ has  a $\tilde \Delta$-flag and
$$ [P(\mu):L(\lambda)]=\sum_{\nu\in \Lambda} [\tilde \Delta(\nu):L(\mu)] [\tilde \Delta(\nu): L(\lambda)]. $$
So, there is  $\nu\in\Lambda$
such that $ [\tilde \Delta(\nu):L(\mu)] [\tilde \Delta(\nu): L(\lambda)]\neq 0$.
Thanks to  Proposition~\ref {dedji},  $\text{wt}(\lambda)\preceq \text{wt}(\nu)$ and $\text{wt}(\mu)\preceq \text{wt}(\nu)$, forcing
 $\text{wt}_0(\lambda)=\text{wt}_0(\mu)$ (see Definitions~\ref{prec} and \ref{wt123}).
\end{proof}

\begin{Cor}\label{jdiejdiej}
 Suppose $\lambda,\mu\in\Lambda_p$ and $\lambda\neq\mu$.  If  $[\bar \Delta(\lambda):L(\mu)]\neq0$, then $\text{wt}(\mu)\prec\text{wt}(\lambda)$.
\end{Cor}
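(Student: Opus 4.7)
The plan is to reduce the statement to Proposition~\ref{dedji} and then upgrade the weak inequality to a strict one using the shortest-word-space structure of $\bar\Delta(\lambda)$.

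First I would pass from $\bar\Delta$ to $\tilde\Delta$. Since $\lambda\in\Lambda_p$, there is the classical surjection $S(\lambda)\twoheadrightarrow D(\lambda)$ of $K\mathfrak S_m$-modules, and applying the exact standardization functor $\Delta$ yields a surjection $\tilde\Delta(\lambda)\twoheadrightarrow\bar\Delta(\lambda)$. Consequently, any composition factor of $\bar\Delta(\lambda)$ is also a composition factor of $\tilde\Delta(\lambda)$, so the hypothesis $[\bar\Delta(\lambda):L(\mu)]\neq 0$ implies $[\tilde\Delta(\lambda):L(\mu)]\neq 0$. Proposition~\ref{dedji} then immediately gives $\text{wt}(\mu)\preceq \text{wt}(\lambda)$, so what remains is to rule out equality of weights under the assumption $\lambda\neq\mu$.

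Next I would argue $|\mu|>|\lambda|$. Write $\lambda\in\Lambda_p(m)$ and $\mu\in\Lambda_p(n)$. By Theorem~\ref{hd1} and its proof, $1_{\ob m}\bar\Delta(\lambda)\cong D(\lambda)$ is the shortest-word space of $\bar\Delta(\lambda)$ and $\operatorname{rad}\bar\Delta(\lambda)\subset\bigoplus_{d>0}1_{\ob{m+2d}}\bar\Delta(\lambda)$. Therefore, any $L(\mu)$ with $\mu\neq\lambda$ appearing as a composition factor of $\bar\Delta(\lambda)$ must lie in $\operatorname{rad}\bar\Delta(\lambda)$. Its shortest-word space $D(\mu)$ lives at level $n$, and since $1_{\ob k}\bar\Delta(\lambda)=0$ for $k<m$, and $1_{\ob m}\bar\Delta(\lambda)=D(\lambda)$ is $K\mathfrak S_m$-simple (so cannot contain $D(\mu)$ as a composition factor for $\mu\neq\lambda$, $\mu\in\Lambda_p(m)$), we obtain $n>m$; moreover $n\equiv m\pmod 2$ by parity of Brauer diagrams, so $n=m+2d$ for some $d>0$.

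Finally I would conclude using linear independence of simple roots. If $\text{wt}(\lambda)=\text{wt}(\mu)$, then by Definition~\ref{wt123},
\[
\sum_{x\in\lambda}\alpha_{c_{\delta_0}(x)}\;=\;\sum_{x\in\mu}\alpha_{c_{\delta_0}(x)}\qquad\text{in }Q=\bigoplus_{i\in I_0}\mathbb Z\alpha_i,
\]
and since $\{\alpha_i\mid i\in I_0\}$ is $\mathbb Z$-linearly independent, the multisets of contents $\{c_{\delta_0}(x)\mid x\in\lambda\}$ and $\{c_{\delta_0}(x)\mid x\in\mu\}$ coincide. In particular $|\lambda|=|\mu|$, i.e., $m=n$, contradicting the previous step. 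Hence $\text{wt}(\mu)\neq\text{wt}(\lambda)$, and combined with $\text{wt}(\mu)\preceq\text{wt}(\lambda)$ this yields $\text{wt}(\mu)\prec\text{wt}(\lambda)$. The only genuine work is the bookkeeping in the second step identifying where $L(\mu)$ can appear inside $\bar\Delta(\lambda)$; the first and third steps are essentially formal.
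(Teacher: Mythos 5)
Your proposal is correct, and its skeleton matches the paper's: pass from $\bar\Delta(\lambda)$ to $\tilde\Delta(\lambda)$, invoke Proposition~\ref{dedji} for $\text{wt}(\mu)\preceq\text{wt}(\lambda)$, and then rule out equality by showing $|\mu|\neq|\lambda|$. The one place you genuinely diverge is the middle step. The paper gets $|\mu|=|\lambda|+2d$ with $d>0$ in one line by combining the BGG-type reciprocity $(P(\mu):\Delta(\lambda))=[\bar\Delta(\lambda):L(\mu)]$ from Corollary~\ref{xijixs}(2) with the description of the $\Delta$-flag of $P(\mu)$ in Proposition~\ref{dektss}, i.e.\ it routes the claim through projective covers. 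You instead argue directly on $\bar\Delta(\lambda)$: its weight spaces sit only in levels $m+2d$, its level-$m$ piece is the simple $K\mathfrak S_m$-module $D(\lambda)$, and applying the exact idempotent truncation $1_{\ob n}(\cdot)$ to a subquotient isomorphic to $L(\mu)$ (whose shortest word space is $D(\mu)$ at level $n$) forces $n=m+2d$ with $d>0$. Both arguments are sound; yours is more elementary and self-contained (no reciprocity needed), while the paper's is shorter given the machinery already established. Your last step, spelling out that linear independence of the simple roots recovers $|\lambda|=|\mu|$ from $\text{wt}(\lambda)=\text{wt}(\mu)$, is exactly the reasoning the paper leaves implicit in ``In particular, $\text{wt}(\lambda)\neq\text{wt}(\mu)$,'' and it is valid in both the $p=0$ and affine cases since the $\alpha_i$ are linearly independent in the Kac--Moody setting.
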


\begin{proof}If  $[\bar \Delta(\lambda):L(\mu)]\neq0$, then $[\tilde \Delta(\lambda):L(\mu)]\neq0$ and hence $\text{wt}(\mu)\preceq\text{wt}(\lambda) $, by Proposition~\ref{dedji}.
Suppose $\lambda\in\Lambda_p(m)$ and $\mu\in\Lambda_p(n) $.
Thanks to Proposition~ \ref{dektss} and Corollary~\ref{xijixs}(2),
 $n=m+2d$ such that $d>0$ if  $\lambda\neq\mu$.. In particular,  $ \text{wt}(\lambda)\neq\text{wt}(\mu)$ and
  the result follows.
\end{proof}
 Recall two functions $ \text{wt}: \Lambda\rightarrow P$ and $ \text{wt}_0: \Lambda\rightarrow P_\theta$. We have
\begin{equation}\label{decompopar}
\Lambda=\coprod_{\rho\in P} \Lambda^\rho, \text{ and }\Lambda=\coprod_{\rho\in P_\theta} \Lambda^{0,\rho}
\end{equation}
where $\Lambda^\rho= \text{wt}^{-1}(\rho)$ and $\Lambda^{0,\rho}=\text{wt}^{-1}_0(\rho) $.
Restricting $ \text{wt}$ and $ \text{wt}_0$  to $\Lambda_p$ yields  $\Lambda^\rho_p$ and $\Lambda^{0,\rho}_p$, respectively. We consider some Serre subcategories as follows:
\begin{itemize} \item For any  $\rho \in P$, $B^0\text{-mod}_\rho\subset  B^0\text{-mod}$ such that its objects are
 $M$  satisfying $\text{wt}(\lambda)=\rho$ if $\Hom_{B^0}(Y(\lambda),M)\neq0$,
 \item For any  $\rho \in P_\theta$, $B\text{-mod}_\rho\subset  B\text{-mod}$
such that its objects are $M$ satisfying $\text{wt}_0(\lambda)=\rho$ if $\Hom_{B}(P(\lambda),M)\neq0$,
\item For any  $\rho \in P$, $B\text{-mod}_{\preceq\rho}\subset B\text{-mod}$ such that its
objects are $M$ satisfying $\text{wt}(\lambda)\preceq\rho$  if  $\Hom_{B}(P(\lambda),M)\neq0$,
\item For any  $\rho \in P$, $B\text{-mod}_{\prec\rho}\subset B\text{-mod}$ such that its
objects are $M$ satisfying $\text{wt}(\lambda)\prec\rho$, if  $\Hom_{B}(P(\lambda),M)\neq0$.
 \end{itemize}

\begin{Lemma} \label{duehdueh} Keep the notations above. \begin{itemize}\item [(1)]
$B^0\text{-mod} =\prod _{\rho\in P} B^0\text{-mod}_\rho$ gives a  block decomposition of $ B^0$-mod,
\item [(2)] $
B\text{-mod}=\prod _{\rho\in P_\theta} B\text{-mod}_{\rho}$  gives a partial  block decomposition of  $ B$-mod.
\end{itemize}\end{Lemma}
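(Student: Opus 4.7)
The plan is to derive part (1) from classical block theory of symmetric groups, and part (2) from the linkage principle (Corollary~\ref{hlink}) together with the existence of projective covers from Proposition~\ref{usdeks}(5).

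For (1), recall from \eqref{isomosym} that $B^0\cong\bigoplus_{m\in\mathbb N}K\mathfrak S_m$, so its blocks are those of the individual symmetric group algebras. By Nakayama's conjecture when $p>0$, and by semisimplicity when $p=0$, the projective indecomposables $Y(\lambda)$ and $Y(\mu)$ lie in the same block precisely when the multisets $\{c(x)\bmod p:x\in\lambda\}$ and $\{c(x)\bmod p:x\in\mu\}$ coincide (collapsing to $\lambda=\mu$ in characteristic zero, where the content multiset already determines the partition uniquely). The weight $\text{wt}(\lambda)=\varpi_{(\delta_0-1)/2}-\sum_{x\in\lambda}\alpha_{c_{\delta_0}(x)}$ records exactly this multiset in the root lattice, so ``same block'' is equivalent to ``same $\text{wt}$''. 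Grouping blocks by weight yields the decomposition in (1).

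For (2), I first handle finitely generated modules. Given such an $M\in B\text{-mod}$, pick a projective cover $\pi\colon P\twoheadrightarrow M$ with $P=\bigoplus_i P(\lambda_i)$ a finite direct sum of indecomposable projectives. Partition the summands according to $\rho:=\text{wt}_0(\lambda_i)$ to get $P=\bigoplus_\rho P^{(\rho)}$. By Corollary~\ref{hlink}, every composition factor $L(\mu)$ of any $P(\lambda_i)$ has $\text{wt}_0(\mu)=\text{wt}_0(\lambda_i)$, so $M^{(\rho)}:=\pi(P^{(\rho)})$ has all composition factors of weight $\rho$. Since two submodules whose composition factors lie in disjoint weight strata can only intersect in zero, $M=\bigoplus_\rho M^{(\rho)}$ with each $M^{(\rho)}\in B\text{-mod}_\rho$. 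For a general $M\in B\text{-mod}$, decompose each cyclic submodule $Bv=\bigoplus_\rho(Bv)^{(\rho)}$ by the preceding step, set $M_\rho:=\sum_{v\in M}(Bv)^{(\rho)}$, and conclude $M=\bigoplus_\rho M_\rho$ by the same composition-factor argument.

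The main obstacle is the final gluing step for arbitrary $M\in B\text{-mod}$: since $B$ is neither unital nor Artinian, one must verify that the assignment $v\mapsto(Bv)^{(\rho)}$ is compatible with inclusions $Bv\subseteq Bv'$ and that the resulting $M_\rho$ satisfies the defining $\Hom$-criterion of $B\text{-mod}_\rho$, i.e.\ that $\Hom_B(P(\lambda),M_\rho)\neq 0$ forces $\text{wt}_0(\lambda)=\rho$. Both compatibility and the criterion follow from the functoriality of the block decomposition---any $B$-homomorphism must preserve block linkage---which itself is a direct consequence of Corollary~\ref{hlink}.
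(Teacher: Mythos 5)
Your proposal is correct and follows essentially the same route as the paper, whose proof is a one-liner citing classical block theory of symmetric groups for (1) and Corollary~\ref{hlink} for (2); your Nakayama/content-multiset translation and the projective-cover plus gluing argument simply make explicit the standard facts the paper leaves implicit. No gap: the canonicity of the weight-wise decomposition (largest submodule with all simple subquotients of a fixed weight) does justify your compatibility and $\Hom$-criterion claims in the final step.
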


\begin{proof} (1) follows well-known results on block decomposition of module category for symmetric groups and (2) follows from
Corollary~\ref{hlink}. We remark that $B\text{-mod}_{\rho}$ may not be a single block.   \end{proof}

Restricting to $B^0\text{-lfdmod}$ and $B\text{-lfdmod}$  we get $ B^0\text{-lfdmod}_\rho$ and $B\text{-lfdmod}_\rho$ for any $\rho$.
\begin{Defn} For any $\rho\in P$, let
$\pi_{\rho}: B\text{-lfdmod}_{\preceq\rho}\rightarrow B^0\text{-lfdmod}_\rho$
be the functor defined first by restriction to $B^0$ then projection onto the block $B^0\text{-lfdmod}_\rho$.
Composing the inclusion of $B^0\text{-lfdmod}_\rho$ into $B^0$-lfdmod with $\Delta$ (resp., $\nabla$) yields  two functors
$\Delta_\rho: B^0\text{-lfdmod}_\rho\rightarrow B\text{-lfdmod}$ and $\nabla_\rho: B^0\text{-lfdmod}_\rho\rightarrow B\text{-lfdmod}$.
\end{Defn}
Obviously, $\pi_{\rho}$, $\Delta_\rho$ and $\nabla_\rho$ are exact functors.
\begin{Lemma}Both  $\Delta_\rho$ and  $\nabla_\rho$ are exact functors from $ B^0\text{-\rm lfdmod}_\rho$ to $ B\text{-\rm lfdmod}_{\preceq\rho}$.
 Moreover,  $\Delta_\rho$ and $\nabla_\rho$ are left and right adjoint to $\pi_\rho$, respectively.\end{Lemma}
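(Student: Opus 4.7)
The plan has two parts: establish exactness with image containment, then prove the two adjunctions.

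\emph{Exactness and image.} Both $\Delta_\rho$ and $\nabla_\rho$ are compositions of the exact inclusion $B^0\text{-lfdmod}_\rho\hookrightarrow B^0\text{-lfdmod}$ with $\Delta$ or $\nabla$ from Lemma~\ref{ddheuhd}(1), so they are exact. For image containment, observe that for any $V\in B^0\text{-lfdmod}_\rho$ and any simple $B$-module $L(\mu)$ one has $[\Delta_\rho(V):L(\mu)]=\sum_\lambda [V:D(\lambda)]\,[\bar\Delta(\lambda):L(\mu)]$, where $\lambda$ ranges over $p$-regular partitions with $\text{wt}(\lambda)=\rho$; Corollary~\ref{jdiejdiej} forces $\text{wt}(\mu)\preceq\rho$ on every nonzero summand, and Lemma~\ref{isodual}(2) yields the same bound for $\nabla_\rho$. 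Both images therefore land in $B\text{-lfdmod}_{\preceq\rho}$.

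\emph{Left adjunction.} Applying tensor--hom to $\Delta_\rho(V)=B\otimes_{B^\sharp}V$ gives $\Hom_B(\Delta_\rho(V),M)\cong\Hom_{B^\sharp}(V,M|_{B^\sharp})$. Since $B^\sharp[d]$ acts as zero on $V$ for all $d>0$, a $B^\sharp$-linear map out of $V$ is precisely a $B^0$-linear map $\phi\colon V\to M$ satisfying $B^\sharp[d]\cdot\phi(V)=0$ for $d>0$; and any such $\phi$ automatically lands in $\pi_\rho(M)$ because $V$ lives in block $\rho$ of $B^0\text{-lfdmod}$. The adjunction therefore reduces to the \emph{key claim}: every $B^0$-linear $\phi\colon V\to\pi_\rho(M)$ automatically satisfies $B^\sharp[d]\cdot\phi(V)=0$ for all $d>0$.

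\emph{Proof of the key claim and main obstacle.} To prove this, I would decompose $\phi(v)\in 1_{\ob k}M$ into simultaneous generalized eigenspaces of the Jucys--Murphy elements $X_1,\ldots,X_k$ from Lemma~\ref{ejihehudh}, on which the central block idempotent $e_\rho^{(k)}\in Z(K\mathfrak S_k)$ selects those with content multiset $(a_1,\ldots,a_k)$ satisfying $\sum_i\alpha_{a_i}=\varpi_{\frac{\delta_0-1}{2}}-\rho$. The local identity $\lcap\circ(X_1+X_2)=0$ extracted from \eqref{changeminus} will show that each cap occurring in $x\in B^\sharp[d]$ annihilates any eigenspace whose capped contents do not sum to zero. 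Iterating over the $d$ caps of $x$ (non-adjacent caps are commuted past crossings via \eqref{AB1}, the resulting cup--cap corrections being absorbed by induction on crossing number), the vector $x\phi(v)\in 1_{\ob{k-2d}}M$ is supported only in $K\mathfrak S_{k-2d}$-blocks of weight $\rho'=\rho+\sum_\ell(\alpha_{c_\ell}+\alpha_{-c_\ell})\succ\rho$ strictly, where the $(c_\ell,-c_\ell)$ are the pairs of eliminated contents. A cup-structure analysis parallel to Proposition~\ref{charcter}, applied to each standardization $\bar\Delta(\nu)$, will then show that every $B^0$-composition factor $D(\mu)$ of $L(\nu)|_{B^0}$ satisfies $\text{wt}(\mu)\preceq\text{wt}(\nu)$; hence $M\in B\text{-lfdmod}_{\preceq\rho}$ forces $M|_{B^0}$ to be supported in $B^0$-blocks of weight $\preceq\rho$, making the block-$\rho'$ component of $M$ vanish and yielding $x\phi(v)=0$. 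This is the main obstacle: translating the local relation \eqref{changeminus} into a global block-shift statement for arbitrary $x\in B^\sharp[d]$, while carefully controlling the cup--cap corrections produced when non-adjacent caps are manipulated.

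\emph{Right adjunction.} This will follow formally from the left by duality. Using $*\circ\Delta\cong\nabla\circ*$ (Lemma~\ref{key1}) together with $D(\lambda)^*\cong D(\lambda)$ and $L(\lambda)^*\cong L(\lambda)$ (Lemma~\ref{isodual}), one obtains $\nabla_\rho(V)\cong\Delta_\rho(V^*)^*$, the $*$-stability of both $B^0\text{-lfdmod}_\rho$ and $B\text{-lfdmod}_{\preceq\rho}$, and the isomorphism $\pi_\rho(M^*)\cong\pi_\rho(M)^*$. Assembling these,
\[
\Hom_B(M,\nabla_\rho(V))\cong\Hom_B(\Delta_\rho(V^*),M^*)\cong\Hom_{B^0}(V^*,\pi_\rho(M^*))\cong\Hom_{B^0}(\pi_\rho(M),V),
\]
establishing the right adjunction.
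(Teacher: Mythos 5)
Your exactness and image-containment argument is correct and is essentially the paper's own: the paper likewise passes to a $\bar\Delta$-flag of $\Delta(M)$, invokes Corollary~\ref{jdiejdiej} to get $\wt(\lambda)\preceq\rho$ on all composition factors, and obtains the statement for $\nabla_\rho$ by duality via Lemma~\ref{isodual}(2). For the adjunctions the paper only appeals to ``standard arguments'', and your skeleton is indeed the intended one: tensor--hom reduces the left adjunction to the key claim that every $B^0$-linear $\phi\colon V\to\pi_\rho(M)$ kills $B^\sharp[d]$ for $d>0$, and the right adjunction then follows formally by duality (your chain of isomorphisms is fine, using $L(\lambda)^*\cong L(\lambda)$ to see that $B\text{-lfdmod}_{\preceq\rho}$ is $*$-stable and that $\pi_\rho$ commutes with $*$; one should still record naturality).

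The genuine gap is your proof of the key claim. As you acknowledge, the passage from the local relation \eqref{changeminus} to a global block-shift statement for an arbitrary $x\in B^\sharp[d]$ is not carried out, and the plan rests on two auxiliary assertions not available at this point (that $x\phi(v)$ is supported in $B^0$-blocks of weight strictly $\succ\rho$, and that $M|_{B^0}$ is supported in blocks of weight $\preceq\rho$); the latter is delicate in characteristic $p$, since the symmetric-group Jucys--Murphy elements differ from the $X_i$'s by the cap-cup terms in \eqref{jucyofb}. None of this is needed, because the key claim follows from a short degree count. Since $\wt$ determines the number of boxes, $\Lambda^\rho\subseteq\Lambda(m)$ for a single $m$, so $V=1_{\ob m}V$ and $B^\sharp[d]\phi(V)\subseteq 1_{\ob{m-2d}}M$. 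Now suppose $1_{\ob k}M\neq 0$ and pick $0\neq v\in 1_{\ob k}M$: the cyclic module $Bv$ has a simple quotient $L(\nu)$ by Proposition~\ref{usdeks}(5), and this quotient is nonzero in degree $k$ because it is generated by the image of $v$; hence $[M:L(\nu)]\neq0$, so $\wt(\nu)\preceq\rho$, which forces $|\nu|\geq m$ (compare coefficient sums of simple roots), while $L(\nu)$, being a quotient of $\bar\Delta(\nu)$, lives only in degrees $|\nu|+2e$ by Lemma~\ref{useifcats} and \eqref{kswskwskwiskwks}; therefore $k\geq m$. Consequently $1_{\ob{m-2d}}M=0$ for all $d>0$, so $B^\sharp[d]\phi(V)=0$ automatically, and your left adjunction is complete with no diagrammatic eigenvalue analysis at all. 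With this substitution the proposal becomes a complete proof; as written, the crucial step is both unproved and attacked at the wrong level of difficulty.
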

\begin{proof} Suppose that $M$ is an object in $B^0$-lfdmod${_\rho}$. Then $\wt(\mu)=\rho$ if $\Hom_{B^0} (Y(\mu), M)\neq 0$. In other words,
 $\text{wt}(\mu)=\rho$ if  $[M:D(\mu)]\neq0$.  Since $\Delta$ is an exact functor, it gives a $\bar\Delta$-flag of $\Delta(M)$ such that each section is of form  $\bar \Delta(\mu)$ satisfying  $[M:D(\mu)]\neq0$.
Thanks to  Corollary~\ref{jdiejdiej}, $\text{wt}(\lambda)\preceq \text{wt}(\mu)=\rho$ if $ [\bar\Delta(\mu):L(\lambda)]\neq0$.
So, $\bar \Delta(\mu)\in \text{B-lfdmod}_{\preceq}\rho$ and hence $\Delta_\rho(M)=\Delta(M)\in\text{B- lfdmod}_{\preceq}\rho$. We actually have a functor
$\Delta_\rho: B^0\text{-lfdmod}_\rho\rightarrow B\text{-lfdmod}_{\preceq\rho}$. By Lemma~\ref{isodual}(2), we have the result for $\nabla_\rho$.
The remaining statements  follow from standard arguments (e.g. induction functor and restriction functor  form an adjoint pair).
\end{proof}
\begin{Lemma}\label{njsjxs}
Suppose  $\rho\in P$ and $\lambda\in\Lambda_p$.\begin{itemize} \item[(1)]  If  $\text{wt}(\lambda)\prec\rho$, then $\pi_\rho(L(\lambda))=0$.
\item [(2)]  The induced exact functor $\bar\pi_\rho:B\text{-\rm lfdmod}_{\preceq\rho}/B\text{-\rm lfdmod}_{\prec\rho}\rightarrow B^0\text{-\rm lfdmod}_\rho$
is an equivalence of categories.\end{itemize}
\end{Lemma}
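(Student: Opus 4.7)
The plan for (1) is to use the $(\pi_\rho,\nabla_\rho)$ adjunction. For any $\mu\in\Lambda_p$ with $\wt(\mu)=\rho$,
\[
\Hom_{B^0}(\pi_\rho(L(\lambda)),D(\mu))\;\cong\;\Hom_B(L(\lambda),\nabla_\rho(D(\mu)))\;=\;\Hom_B(L(\lambda),\bar\nabla(\mu)).
\]
By Lemma~\ref{isodual}(1), $\bar\nabla(\mu)=\bar\Delta(\mu)^*$ has simple socle $L(\mu)$, so this $\Hom$ is nonzero only when $\lambda=\mu$. The hypothesis $\wt(\lambda)\prec\rho$ forces $\lambda\neq\mu$ for every such $\mu$, so all these Homs vanish. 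Since any nonzero $M\in B\text{-lfdmod}_\rho^0$ admits a nonzero simple quotient (each finite-dimensional weight space $1_{\ob m}M$ has one in the $\rho$-block of $K\mathfrak{S}_m$), we conclude $\pi_\rho(L(\lambda))=0$.

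For (2), the strategy is to invoke Gabriel's theorem on Serre localizations: in order to show $\bar\pi_\rho$ is an equivalence, it suffices to verify $\ker\pi_\rho=B\text{-lfdmod}_{\prec\rho}$ (so that $\bar\pi_\rho$ is well-defined and faithful) together with fully-faithfulness of the left adjoint $\Delta_\rho$, equivalently that the unit $\eta_V:V\to\pi_\rho\Delta_\rho(V)$ is an isomorphism for all $V\in B^0\text{-lfdmod}_\rho$. The inclusion $B\text{-lfdmod}_{\prec\rho}\subseteq\ker\pi_\rho$ follows from the same adjunction argument as in (1): a nonzero $\pi_\rho(N)$ would yield, via the simple socle of some $\bar\nabla(\mu)$, an $L(\mu)$ with $\wt(\mu)=\rho$ as a subquotient of $N$, contradicting $N\in B\text{-lfdmod}_{\prec\rho}$. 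The reverse inclusion is immediate from exactness of $\pi_\rho$ once one knows $\pi_\rho(L(\mu))\neq 0$ for $\wt(\mu)=\rho$, which is the substance of the next paragraph.

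Since $V\in B^0\text{-lfdmod}_\rho$ decomposes as $V=\bigoplus_m 1_{\ob m}V$ with each summand of finite length, exactness of $\pi_\rho\Delta_\rho$ together with the five-lemma and preservation of direct sums by both $\Delta_\rho$ and $\pi_\rho$ reduce the verification of $\eta_V$ to the case $V=D(\mu)$ with $\wt(\mu)=\rho$. Here $\Delta_\rho(D(\mu))=\bar\Delta(\mu)$, and Corollary~\ref{jdiejdiej} places $\mathrm{rad}\,\bar\Delta(\mu)\in B\text{-lfdmod}_{\prec\rho}$, so part~(1) gives $\pi_\rho(\bar\Delta(\mu))\cong\pi_\rho(L(\mu))$. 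The key computation is then
\[
[\pi_\rho(L(\mu)):D(\nu)]\;=\;\dim\Hom_{B^0}(Y(\nu),\pi_\rho(L(\mu)))\;=\;\dim\Hom_B(\Delta(\nu),L(\mu))\;=\;\delta_{\mu,\nu},
\]
where the first equality is Proposition~\ref{usdeks}(6), the second is the $(\Delta_\rho,\pi_\rho)$ adjunction applied to $Y(\nu)\in B^0\text{-lfdmod}_\rho$ (valid since $\wt(\nu)=\rho$), and the last uses that $\Delta(\nu)=B\otimes_{B^\sharp}Y(\nu)$ is generated by its shortest word space $1_{\ob{|\nu|}}\Delta(\nu)=Y(\nu)$: any simple quotient $L(\lambda)$ of $\Delta(\nu)$ must satisfy $|\lambda|=|\nu|$ with $D(\lambda)$ arising as a $K\mathfrak{S}_{|\nu|}$-quotient of $Y(\nu)$, forcing $\lambda=\nu$. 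Hence $\pi_\rho(L(\mu))=D(\mu)$ and $\eta_{D(\mu)}$ is an isomorphism.

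The main obstacle is pinning down the identity $\pi_\rho(L(\mu))=D(\mu)$ rather than merely finding $D(\mu)$ as a simple quotient; this requires combining the triangular decomposition (Lemma~\ref{triangularde}) with the generating property of the shortest word space to identify the head of $\Delta(\nu)$ precisely. Once this is in hand, the extension of $\eta$ from simples to arbitrary $V$ and the identification $\ker\pi_\rho=B\text{-lfdmod}_{\prec\rho}$ reduce to routine exactness arguments, and Gabriel's theorem closes out (2).
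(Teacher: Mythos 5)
Your argument is correct, but it runs along a genuinely different track from the paper's. For (1) the paper argues purely with degrees: if $\wt(\lambda)\prec\rho$ and $\Lambda^\rho\subset\Lambda(m)$, then $\lambda$ has strictly more than $m$ boxes, so every nonzero weight space $1_{\ob k}L(\lambda)$ sits in a degree $k>m$ and the restriction of $L(\lambda)$ to $B^0$ simply misses the block $B^0\text{-lfdmod}_\rho$; you instead detect the vanishing through the adjunction $\pi_\rho\dashv\nabla_\rho$ together with the simple socle of $\bar\nabla(\mu)$ (dual, via Lemma~\ref{isodual}, to the simple head statement in Theorem~\ref{hd1}(1)). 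For (2) the paper identifies the indecomposable projective objects of the quotient category as the $\Delta(\lambda)$ with $\lambda\in\Lambda_p^\rho$ (using Proposition~\ref{dektss} and Corollaries~\ref{xijixs}, \ref{jdiejdiej}) and checks $\bar\pi_\rho(\Delta(\lambda))\cong Y(\lambda)$, concluding by matching projective generators; you instead invoke the Gabriel recognition criterion for Serre quotients, verifying $\ker\pi_\rho=B\text{-lfdmod}_{\prec\rho}$ and that the unit of $\Delta_\rho\dashv\pi_\rho$ is invertible, reduced to simples by the multiplicity computation $[\pi_\rho(L(\mu)):D(\nu)]=\delta_{\mu,\nu}$. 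Both routes work: the paper's is more elementary and hands you the quotient category's projectives explicitly (exactly the data the stratification results use afterwards), while yours leans on the adjunction lemma and general localization theory but yields as byproducts the sharper statements $\ker\pi_\rho=B\text{-lfdmod}_{\prec\rho}$ and $\pi_\rho(L(\mu))\cong D(\mu)$ for $\wt(\mu)=\rho$. Two small points to tighten: the equality $\dim\Hom_B(\Delta(\nu),L(\nu))=1$ (not merely nonvanishing) needs the Frobenius reciprocity count $\Hom_B(B\otimes_{B^\sharp}Y(\nu),L(\nu))\cong\Hom_{B^0}(Y(\nu),D(\nu))\cong K$, as in the proof of Proposition~\ref{dektss}; and to conclude that $\eta_{D(\mu)}$ is an isomorphism you should note it is nonzero because it corresponds to $\mathrm{id}_{\bar\Delta(\mu)}$ under the adjunction, so that a nonzero map between modules each isomorphic to the simple $D(\mu)$ is invertible.
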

\begin{proof}
Suppose $\Lambda^\rho\subset \Lambda(m)$ for some positive integer $m$. Thanks to  Definition~\ref{prec},  $\Lambda^{\sigma} \subset \Lambda(n)$ for some $n>m$ provided that  $\sigma\prec \rho $.
If $\lambda\in \Lambda^{\sigma}$, then $ L(\lambda)^s=1_{\ob n}L(\lambda)$   and  each weight space   $ 1_{\ob k} L(\lambda)$ of $L(\lambda)$ satisfies $k=n+2d$ for some $d\in\mathbb N$.
Therefore,  the restriction of $L(\lambda)$ to $B^0$ belongs to $\coprod_{d\in\mathbb N}K\mathfrak S_{n+2d}\text{-mod}$. Since $n>m$, we have  $\pi_\rho(L(\lambda))=0$.
So,   the  exact functor $\pi_\rho$ induces the required exact  functor  $\bar \pi_\rho$ and the simple objects in $B\text{-lfdmod}_{\preceq\rho}/B\text{-lfdmod}_{\prec\rho}$ are represented by the set
$\{L(\lambda)\mid \lambda \in \Lambda_p^\rho\}$.

Thanks to Proposition~\ref{dektss}, $P(\lambda)$ has a $\Delta$-flag such that  $\Delta(\lambda)$ appears as the top section.
By Corollaries~\ref{xijixs} and ~\ref{jdiejdiej}, $\mu\succ\rho$ if  $(P(\lambda):\Delta(\mu))\neq 0$ and  $\mu\neq \lambda$.
This proves that  $\Delta(\lambda)$ is the largest quotient of $P(\lambda)$ which belongs to $B\text{-lfdmod}_{\preceq\rho}$.
So, $\{\Delta(\lambda)\mid \lambda\in\Lambda_p^\rho\}$ is a complete set of all non-isomorphic  indecomposable projective objects in $B\text{-lfdmod}_{\preceq\rho}/B\text{-lfdmod}_{\prec\rho}$.
Suppose that $\lambda\in\Lambda_p^\rho\subset \Lambda_p(m)$.  As in the proof of Theorem~\ref{hd1}, $\Delta(\lambda)=\oplus_{d\in\mathbb N} 1_{\ob{m+2d}}\Delta(\lambda)$.
Then $\bar \pi_\rho( 1_{\ob {m+2d}}\Delta(\lambda))=0 $ for $d>0$ and   $ \bar\pi_\rho(\Delta(\lambda))=1_{\ob m}\Delta(\lambda)\cong Y(\lambda)$.
So, $ \bar\pi_\rho$ induces a bijection between isomorphism classes of all indecomposable projective objects in $B\text{-lfdmod}_{\preceq\rho}/B\text{-lfdmod}_{\prec\rho}$ and $ B^0\text{-lfdmod}_\rho$, proving (2).
\end{proof}

\begin{Theorem}\label{fullystratified}
The category $B$-{\rm ldfmod} is an upper finite fully  stratified category in the sense of \cite[Definition~3.36]{BS}.  The corresponding  stratification is given by the function $\text{wt}:\Lambda_p\rightarrow P$
and the order $ \preceq$. The associated graded category is $B^0$-{\rm lfdmod}. Moreover, if $p=0$, then $B$-{\rm lfdmod} is an upper finite highest weight  category in the sense of \cite[Definition~3.36]{BS}.
\end{Theorem}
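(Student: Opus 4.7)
The plan is to verify directly the axioms of \cite[Definition~3.36]{BS} using the stratification function $\rho:\Lambda_p\to (P,\preceq)$ sending $\lambda\mapsto \text{wt}(\lambda)$. First I observe that this gives an upper finite poset of strata: since $\text{ht}(\text{wt}(\mu)-\text{wt}(\lambda))=|\lambda|-|\mu|$, the relation $\text{wt}(\mu)\succeq\text{wt}(\lambda)$ forces $|\mu|\le|\lambda|$, so the up-set of any stratum is contained in $\bigcup_{k\le|\lambda|}\Lambda_p(k)$, which is finite. Each stratum $\rho^{-1}(\tau)$ is also finite since it sits inside a single $\Lambda_p(m)$.

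Next I would verify the three structural axioms. The axiom on simple heads and composition factors of proper standards is immediate: Theorem~\ref{hd1} gives the unique simple head $L(\lambda)$ of $\bar\Delta(\lambda)$ with $[\bar\Delta(\lambda):L(\lambda)]=1$, and Corollary~\ref{jdiejdiej} says any other composition factor $L(\mu)$ satisfies $\text{wt}(\mu)\prec\text{wt}(\lambda)$. For the $\bar\Delta$-flags of standards with sections in the same stratum, I would apply the exact functor $\Delta$ (Lemma~\ref{ddheuhd}) to a filtration of $Y(\lambda)$ by $D(\mu)$'s for $\mu$ in the same block of $K\mathfrak{S}_{|\lambda|}$ as $\lambda$; by the block description in Lemma~\ref{duehdueh}(1), every such $\mu$ satisfies $\text{wt}(\mu)=\text{wt}(\lambda)$, so the resulting $\bar\Delta$-flag of $\Delta(\lambda)=\Delta(Y(\lambda))$ has all sections in the single stratum of $\lambda$, which is the ``fully'' part of the definition. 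Finally, Proposition~\ref{dektss} supplies a finite $\Delta$-flag of $P(\lambda)$ with $\Delta(\lambda)$ on top; the BGG-type reciprocity $(P(\lambda):\Delta(\mu))=[\bar\Delta(\mu):L(\lambda)]$ in Corollary~\ref{xijixs}(2) together with Corollary~\ref{jdiejdiej} shows that every other section $\Delta(\mu)$ satisfies $\text{wt}(\mu)\succ\text{wt}(\lambda)$.

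To identify the associated graded category with $B^0$-lfdmod, I would apply Lemma~\ref{njsjxs}(2) stratum by stratum: for each weight $\rho$, the Serre quotient $B\text{-lfdmod}_{\preceq\rho}/B\text{-lfdmod}_{\prec\rho}$ is equivalent to $B^0\text{-lfdmod}_\rho$ via the truncation $\bar\pi_\rho$, and by the block decomposition of Lemma~\ref{duehdueh}(1) these patches glue to $B^0\text{-lfdmod}=\prod_{\rho\in P}B^0\text{-lfdmod}_\rho$. For the highest weight statement when $p=0$: the semisimplicity of $K\mathfrak{S}_m$ in characteristic zero gives $Y(\lambda)=D(\lambda)$, hence $\Delta(\lambda)=\bar\Delta(\lambda)$ for every $\lambda$; this collapses each stratum to a single simple and promotes the fully stratified structure to an upper finite highest weight category in the sense of \cite[Definition~3.36]{BS}.

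The main technical point, and the only place where the choice of the refined order $\preceq$ on $P$ (rather than the coarser partial block order coming from $P_\theta$) matters, is the assertion that every $\bar\Delta$-section of $\Delta(\lambda)$ lies in the same stratum as $\lambda$; this is what makes the stratification ``fully'' stratified and hinges on the fact that symmetric group blocks are parameterized by actual weights in $P$, which is encoded in Definition~\ref{wt123} together with Lemma~\ref{duehdueh}(1). Once this alignment is in place, everything else is a direct translation of results already established in the previous sections.
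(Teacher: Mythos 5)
Your proposal is correct and follows essentially the same route as the paper: upper finiteness from the fact that $\text{wt}(\mu)\succeq\text{wt}(\lambda)$ forces $|\mu|\le|\lambda|$, finite $\Delta$-flags of $P(\lambda)$ with strictly higher lower sections via Proposition~\ref{dektss} and Corollaries~\ref{xijixs}, \ref{jdiejdiej}, the $\bar\Delta$-flag of $\Delta(\lambda)$ within a single stratum via exactness of $\Delta$ and the symmetric-group block decomposition (Lemma~\ref{duehdueh}(1)), the associated graded identification via Lemma~\ref{njsjxs}(2), and the collapse $\Delta(\lambda)=\bar\Delta(\lambda)$ when $p=0$ for the highest weight statement. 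The only cosmetic difference is that you verify the axioms of \cite[Definition~3.36]{BS} directly where the paper additionally invokes \cite[Lemma~3.22]{BS} to pass from $+$-stratified to fully stratified.
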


\begin{proof}Suppose that $\Lambda_p^{\rho}\subset\Lambda_p(m)$ and $\sigma\succeq \rho$. By Definition~\ref{prec}, $\text{wt}(\mu)\geq \text{wt}(\lambda)$ for any
$\mu\in \Lambda_p^{\sigma}$ and $\lambda\in \Lambda_p^{\rho}$. So, $\mu\in \Lambda_p(n)$ for some $n\leq m$ and    $ \coprod_{\sigma\succeq\rho}\Lambda^{\sigma}_p $ is a finite set.
Thanks to  Proposition~\ref{dektss} and  Corollaries~\ref{xijixs} and~\ref{jdiejdiej}, $P(\lambda)$ has a finite $\Delta$-flag such that  $\Delta(\lambda)$ appears as   the top section and moreover, other sections $\Delta(\mu)$ satisfying  $\text{wt}(\mu)\succ \text{wt}(\lambda)$.
Combining this with Lemma~\ref{njsjxs},  we see that $B$-lfdmod is an upper finite $+$-stratified category in the sense of \cite[Definition~3.36]{BS}.
 By Lemma~\ref{duehdueh}(1) and the definitions of $\Delta(\lambda)$ and $\bar\Delta(\mu)$ in \eqref{stahhss},  $\Delta(\lambda)$ has a $\bar \Delta$-flag with sections $\bar\Delta(\mu)$ such that  $\text{wt}(\lambda)=\text{wt}(\mu)$. Then $B$-lfdmod is also fully  stratified by the arguments in \cite[Lemma~3.22]{BS}.

If $p=0$, then $\Delta(\lambda)=\bar\Delta(\lambda)$ for all $\lambda\in\Lambda$ and $B$-lfdmod is   highest weight. The claim for the associated graded category follows from   Lemma~\ref{njsjxs}(2) directly.
\end{proof}

\begin{rem}\label{highestweight}
Theorem~\ref{fullystratified} can also be obtained  conceptually via Brundan and Stroppel's  recent work in \cite{BS} as follows (although one still needs Corollary~\ref{jdiejdiej} in order to obtain the precise stratification from Theorem~\ref{fullystratified} that incorporates the optimal partial order $\preceq$). Thanks to  Lemmas~\ref{useifcats}--\ref{triangularde}, $(B^-,B^0,B^+)$ is an upper finite triangular decomposition of $B$ in the sense of \cite[Definition~5.24]{BS} (hence a Cartan decomposition in the sense \cite[Definition~5.23]{BS} by \cite[Lemma~5.24]{BS}). By \cite[Theorem~5.30]{BS}, $B$
is an upper finite based stratified algebra in the sense of \cite[Definition~5.17]{BS}. Moreover, it is also an upper finite based quasi-hereditary algebra is the sense of \cite[Definition~5.1]{BS} if $p=0$ since $B^0$ is semisimple in this case.
 Thanks to \cite[Corollary~5.31]{BS}, $B\text{-lfdmod}$ is an upper finite fully stratified category with stratification $\text{wt}:\Lambda_p\rightarrow P$ and $B\text{-lfdmod}$ is highest weight if  $p=0$.
This implies  Propositions~\ref{ext}, \ref{dektss} directly. Thanks the referee for his/her explanation here.
\end{rem}
\section{Kazhdan-Lusztig theory  in the case $p=0$ and $\delta_0\in \mathbb Z$}
In this section,  we  assume that  $p=0$ and $\delta_0\in \mathbb Z $. Thanks to  Theorem~\ref{semi}, $B$-mod is completely reducible when $p=0$ and $\delta_0\notin \mathbb Z$. In this case, there is nothing to be discussed.
 We reformulate some data  for $\mathfrak{sl}_K$ in section 4 as follows:
\begin{itemize}\item $ I_0=\mathbb Z$  if $\delta_0$ is odd, and
    $I_0=  \mathbb Z+\frac{1}{2}$,  if $\delta_0$ is even,
   \item $P=\sum_{i\in  I_0}\mathbb Z\varpi_i$, where $\varpi_i$'s are fundamental weights,
    \item   $\alpha_i=\varepsilon_{i-\frac{1}{2}}-\varepsilon_{i+\frac{1}{2}}$, where  $\varepsilon_{i-1/2}=\varpi_{i}-\varpi_{i-1}$, for all $i\in I_0$,
    \item  $\theta$, the involution on   $P$ satisfying
$\theta(\varepsilon_i )=-\varepsilon_{-i}$, for all $ i\in \mathbb I:=I_0+1/2$.
\end{itemize}
Then  $\theta$  induces an automorphism of $\mathfrak {sl}_K$.
Consider the automorphism $\phi$ of $\mathfrak {sl}_K$ such that $\phi$ switches $e_i$ and $f_i$ for all admissible $i$ and sends $h$ to $-h$ for any $h$ in the Cartan subalgebra of $\mathfrak {sl}_{K}$. Then $\phi \circ \theta$  is an automorphism of $\mathfrak{sl}_K$ such that
$\phi \circ \theta (e_i)=f_{-i}$, $\phi \circ \theta (f_i)=e_{-i}$ and $\phi \circ \theta(h_i)=-h_{-i}$, $\forall  i\in I_0$.
Following \cite{BW,Bao}, we have    $\mathfrak g=\{g\in\mathfrak {sl}_K\mid \phi \circ \theta (g)=g\}$, a Lie subalgebra of $\mathfrak{sl}_K$,
 and $(\mathfrak{sl}_K$, $\mathfrak g)$ forms a classical symmetric pair. It is the infinite rank limit of the quantum symmetric pair (in the sense of \cite{Le}) $(U_q(\mathfrak {sl}_\infty), U^i_q(\mathfrak{sl}_\infty))$ at the limit $q\mapsto1$ in \cite{Bao}.

  Let $\mathbb V$ be the  natural representation  of $\mathfrak {sl}_K$ over $\mathbb C$.
It is the  $\mathbb C$-space with basis
$\{v_i\mid i\in\mathbb I \}$ such that
\begin{equation}\label{eexheu}
e_i v_a=\delta_{i+\frac{1}{2}, a}v_{a-1}, \quad f_i v_a=\delta_{i-\frac{1}{2},a}v_{a+1}, \forall (i, a)\in I_0 \times \mathbb I.
\end{equation}
Consider  the restricted dual $\mathbb W$ of $\mathbb V$. As a vector space, it  has  dual  basis $\{w_a\mid a\in\mathbb I\}$ such that
 $$e_i w_a=\delta_{i-\frac{1}{2}, a}w_{a+1}, \quad f_i w_a=\delta_{i+\frac{1}{2},a}w_{a-1}, \forall (i, a ) \in I\times \mathbb I.$$
 Restricting  $\mathbb W$ to $\mathfrak g$ yields  a $\mathfrak g$-module. Bao-Wang~\cite{BW} considered $\mathfrak{sl}_K$-module (and hence $\mfg$-module)
  $\bigwedge_d^{\infty}\mathbb V$ and
 $\bigwedge_d^{\infty}\mathbb W$,  the $d$th sector of  semi-infinite wedge space $\bigwedge^{\infty}\mathbb V$  and $\bigwedge^{\infty}\mathbb W$ such that
$$\textstyle\bigwedge^{\infty}\mathbb V=\bigoplus_{d\in \mathbb I}\bigwedge_d^\infty \mathbb V \text{ and }\bigwedge^{\infty}\mathbb W=\bigoplus_{d\in \mathbb I}\bigwedge_d^\infty \mathbb W.$$
 More precisely, $\bigwedge_d^{\infty}\mathbb V$ has  basis $\{v_{\mathbf i}=v_{i_1}\wedge v_{i_2}\wedge\ldots\mid \mathbf i\in \mathbb I_d^+\}$ and  the space $\bigwedge_d^{\infty}\mathbb W$  has basis $ \{w_{\mathbf i}=w_{i_1}\wedge w_{i_2}\wedge\ldots \mid \mathbf i\in \mathbb I_d^-\}$, where
$$ \begin{aligned}\mathbb I_d^{+}&=\{\mathbf i\in\mathbb I^\infty\mid i_1>i_2>\ldots, i_k=d-k+1  \text{ for } k\gg0\}, \text {and}\\
\mathbb I_d^{-}&=\{\mathbf i\in\mathbb I^\infty\mid i_1<i_2<\ldots, i_k=d+k  \text{ for $k\gg0$}\}.
\end{aligned}$$
 These bases are known as monomial bases in the literature.
 Moreover, these two bases  can be expressed as  $\{v_{\lambda, d}\mid \lambda\in\Lambda\}$ and $\{ w_{\lambda, d}\mid \lambda\in\Lambda\}$, where
$ v_{\lambda, d}:=v_{\lambda_1+d}\wedge v_{\lambda_2+d-1}\wedge v_{\lambda_3+d-2}\wedge\ldots$ and
$ w_{\lambda, d}:=w_{d+1-\lambda_1}\wedge w_{d+2-\lambda_2}\wedge w_{d+3-\lambda_3}\wedge\ldots$.

Recall that $  [K_0(B\text{-mod}^{\Delta})]$ is $\mathfrak{g}$-module such that $\tilde e_i$'s act via $\tilde E_i$'s in Theorem~\ref{cateofg}.
\begin{Theorem}\label{isoophi} As $\mathfrak{g}$-modules, $ [K_0(B\text{-\rm mod}^{\Delta})] \cong \bigwedge_{\frac{\delta_0}{2}-1}^{\infty}\mathbb W$, and the   required isomorphism
$\varphi$ sends $ [\Delta(\lambda)]$ to $w_{\lambda', \frac{\delta_0}{2}-1}$,  $\forall \lambda\in \Lambda$, where $\lambda'$ is the conjugate  partition of $\lambda$.
\end{Theorem}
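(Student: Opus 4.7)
The plan is to define the isomorphism $\varphi$ directly by the formula $\varphi([\Delta(\lambda)])=w_{\lambda',d}$ with $d=\frac{\delta_0}{2}-1$, and then to verify $\mathfrak g$-equivariance by a combinatorial computation matching the action of $\tilde e_i$ on both sides. First I would note that $\{[\Delta(\lambda)]\mid\lambda\in\Lambda\}$ is a $\mathbb C$-basis of $[K_0(B\text{-mod}^\Delta)]$ by Corollary~\ref{xijixs}(1), while $\{w_{\mu,d}\mid\mu\in\Lambda\}$ is by definition the monomial basis of $\bigwedge_d^{\infty}\mathbb W$, so that since $\lambda\mapsto\lambda'$ is a bijection on $\Lambda$, the formula above extends by linearity to a well-defined $\mathbb C$-linear isomorphism $\varphi$.

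The key step would be to verify that $\varphi$ commutes with the generators $\tilde e_i=e_i+f_{-i}$ of $\mathfrak g$ for all $i\in I_0$. By Theorem~\ref{cateofg}, $\tilde e_i$ acts on $[K_0(B\text{-mod}^\Delta)]$ as $\tilde E_i$, and combining Lemma~\ref{usuactifuc}(2) with Lemma~\ref{filteration}(3)(4) gives
\[\tilde E_i[\Delta(\lambda)]=\sum_{\mu}[\Delta(\mu)],\]
where $\mu$ ranges over partitions obtained from $\lambda$ by either removing a box of $c_{\delta_0}$-content $i$ or adding a box of $c_{\delta_0}$-content $-i$. On the other side, using the formulas $e_i w_a=\delta_{i-1/2,a}w_{a+1}$ and $f_{-i}w_a=\delta_{-i+1/2,a}w_{a-1}$, a direct computation on $w_{\lambda',d}=\bigwedge_{k\geq 1}w_{d+k-\lambda'_k}$ would show that $e_iw_{\lambda',d}=\sum w_{\nu,d}$ summed over diagrams $\nu$ obtained from $\lambda'$ by deleting a removable box in row $k$ satisfying $d+k-\lambda'_k=i-\tfrac12$, and $f_{-i}w_{\lambda',d}=\sum w_{\nu,d}$ summed over diagrams obtained from $\lambda'$ by adding an addable box in row $k$ satisfying $d+k-\lambda'_k=-i+\tfrac12$. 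Translating back from $\lambda'$-boxes to $\lambda$-boxes via conjugation negates the usual content, and substituting $d=\frac{\delta_0}{2}-1$ would reveal that the removed $\lambda$-box has $c_{\delta_0}$-content exactly $i$ and the added $\lambda$-box has $c_{\delta_0}$-content exactly $-i$, so that the two expressions $(e_i+f_{-i})w_{\lambda',d}$ and $\varphi(\tilde E_i[\Delta(\lambda)])$ agree, yielding $\mathfrak g$-equivariance.

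The main obstacle will be purely combinatorial bookkeeping: one has to trace carefully through the shift $d=\frac{\delta_0}{2}-1$, the conjugation $\lambda\leftrightarrow\lambda'$ that reverses the sign of the usual content, and the further shift $c_{\delta_0}(x)=c(x)+\frac{\delta_0-1}{2}$, and additionally one must confirm that the removable/addable box condition on $\lambda'$ coming from strict monotonicity of the semi-infinite wedge indices is exactly the combinatorial condition appearing on the categorical side. The conjugation is not a cosmetic choice but is forced upon us by the minus sign appearing in Lemma~\ref{msikdjde}(3), which turns the naive lowering operator $F_i$ into $F_{-i}$ inside the decomposition of $\tilde E_i$ in Lemma~\ref{usuactifuc}(2); passing to conjugate partitions compensates for this sign so that the $c_{\delta_0}$-contents of added and removed boxes match on both sides. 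Once the correspondence is verified, combining $\mathfrak g$-equivariance with the bijectivity of $\varphi$ on bases gives the desired $\mathfrak g$-isomorphism.
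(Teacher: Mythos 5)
Your proposal is correct, but it takes a genuinely different route from the paper. The paper never verifies $\mathfrak g$-equivariance by hand on the wedge: it composes known isomorphisms, namely the $\mathfrak{sl}_K$-isomorphism $\phi\circ\Delta^{-1}$ already built in Theorem~\ref{cateofg} (via Kleshchev's categorification \eqref{nxjsnx}), the identification $\bigwedge_d^\infty\mathbb V\cong V(\varpi_{d+\frac12})$ which sends $[\Delta(\lambda)]\mapsto v_{\lambda,d}$ (checked with Lemma~\ref{filteration}(3)--(4) and \eqref{eexheu}), and then Bao--Wang's $\mathfrak{sl}_K$-isomorphism $\bigwedge_d^\infty\mathbb V\cong\bigwedge_d^\infty\mathbb W$, $v_{\lambda,d}\mapsto w_{\lambda',d}$ \cite[Proposition~10.6]{BW}; restriction to $\mathfrak g$ finishes the argument, so the conjugate-partition combinatorics is outsourced to \cite{BW}. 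You instead define $\varphi$ on the bases and check directly that it intertwines $\tilde E_i$ with $\tilde e_i=e_i+f_{-i}$; since these generate $\mathfrak g$ (Definition~\ref{liesub}(2), as $\delta_0\in\mathbb Z$), and commuting with generators implies commuting with all of $\mathfrak g$, this does suffice, and your box/content computation is consistent: removing a box of $\lambda'$ in row $k$ with index $d+k-\lambda'_k=i-\frac12$ corresponds to removing a $\lambda$-box of $c_{\delta_0}$-content $i$, and adding one with $d+k-\lambda'_k=-i+\frac12$ corresponds to adding a $\lambda$-box of $c_{\delta_0}$-content $-i$, matching Lemma~\ref{usuactifuc}(2) combined with Lemma~\ref{filteration}(3)--(4) (using $Y(\lambda)=S(\lambda)$ as $p=0$). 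What each approach buys: the paper's is shorter, stays at the level of module isomorphisms, and implicitly gives the stronger fact that $\varphi$ is the restriction of an $\mathfrak{sl}_K$-isomorphism through $\bigwedge_d^\infty\mathbb V$; yours is self-contained (it essentially reproves the relevant special case of \cite[Proposition~10.6]{BW} at the level of the $\tilde e_i$), at the cost of redoing the index-shift and sign bookkeeping on the semi-infinite wedge, where you should note explicitly that only one wedge factor can be affected and no reordering signs occur, so the coefficients are all $1$. Your closing remark that the conjugation is forced by the minus sign of Lemma~\ref{msikdjde}(3) (which turns $F_i$ into $F_{-i}$ in Lemma~\ref{usuactifuc}(2)) is accurate and consistent with why the paper passes through $\mathbb W$ rather than $\mathbb V$.
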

\begin{proof} Thanks to \cite[Proposition~10.6]{BW},  $\bigwedge_d^{\infty}\mathbb V\cong\bigwedge_d^{\infty}\mathbb W$ as $\mathfrak {sl}_K$-modules and the required isomorphism, say $f$, sends  $v_{\lambda,d}$ to $w_{\lambda',d}$. It is well-known that there are  $\mathfrak {sl}_K$-isomorphisms
 $f_1:\bigwedge_d^{\infty}\mathbb V\cong V(\varpi_{d+\frac{1}{2}})$ and $f_2:\bigwedge_d^{\infty}\mathbb W\cong V(\varpi_{d+\frac{1}{2}})$ such that $f_1(v_{\emptyset,d})=v_{\varpi_{d+\frac{1}{2}} }$ and $f_2(w_{\emptyset,d})= v_{\varpi_{d+\frac{1}{2}} }$, where $v_{\varpi_{d+\frac{1}{2}} } $ is the nonzero highest weight vector with weight $\varpi_{d+\frac{1}{2}}$.
  Recall the $\mathfrak{sl}_K$-isomorphism $\phi$ in \eqref{nxjsnx}. Then $f_1^{-1}\circ \phi\circ \Delta^{-1}:[K_0(B\text{-mod}^\Delta)]\cong \bigwedge_d^{\infty}\mathbb V$, an $\mathfrak{sl}_K$-isomorphism sending  $ [\Delta(\emptyset)]$ to $v_{\emptyset,d}$, where $d=\frac{\delta_0}{2}-1$.
 Since $p=0$, we have $Y(\lambda)=S(\lambda)$ for any $\lambda\in\Lambda$.
Using Lemma~\ref{filteration}(3)--(4) and \eqref{eexheu}, we see that this isomorphism sends $ [\Delta(\lambda)]$ to $v_{\lambda,d}$ (see also \cite[Section~7]{Br}).
 So,
$f\circ f_1^{-1} \circ  \phi\circ \Delta^{-1}$ is the required  $\mfg$-isomorphism $[K_0(B\text{-mod})^\Delta]\cong \bigwedge_d^{\infty}\mathbb W$  sending  $ [\Delta(\lambda)]$ to $ w_{\lambda',d}$.
\end{proof}


For $n\geq2$, consider the free abelian group with basis $\{\zeta_1,\zeta_2,\ldots,\zeta_n\}$ with a symmetric bilinear form $(,)$ given by
$$(\zeta_i,\zeta_j)=\delta_{i,j}, \text{ for }i,j\in\{1,2,\ldots,n\}.$$
Set $\Pi_n=\{\beta_0,\beta_1, \beta_2, \ldots, \beta_{n-1}\}$, where
  $\beta_i=\zeta_i-\zeta_{i+1}$,  $ 1\le i\le n-1$, and $\beta_0=-\zeta_1-\zeta_2$.
Let $\mathfrak{so}_{2n}$ be the special orthogonal Lie algebra corresponding to the Dynkin diagram of type $D_n$ together with the set of simple roots $\Pi_n$ listed as follows :

\begin{center}
\hskip 3cm \setlength{\unitlength}{0.16in}
\begin{picture}(24,3.5)
\put(8,2){\makebox(0,0)[c]{$\bigcirc$}}
\put(10.4,2){\makebox(0,0)[c]{$\bigcirc$}}
\put(14.85,2){\makebox(0,0)[c]{$\bigcirc$}}
\put(17.25,2){\makebox(0,0)[c]{$\bigcirc$}}
\put(19.4,2){\makebox(0,0)[c]{$\bigcirc$}}
\put(6,3.8){\makebox(0,0)[c]{$\bigcirc$}}
\put(6,.3){\makebox(0,0)[c]{$\bigcirc$}}
\put(8.4,2){\line(1,0){1.55}} \put(10.82,2){\line(1,0){0.8}}
\put(13.2,2){\line(1,0){1.2}} \put(15.28,2){\line(1,0){1.45}}
\put(17.7,2){\line(1,0){1.25}}
\put(7.6,2.2){\line(-1,1){1.3}}
\put(7.6,1.8){\line(-1,-1){1.3}}
\put(12.5,1.95){\makebox(0,0)[c]{$\cdots$}}
\put(5.1,0.3){\makebox(0,0)[c]{\tiny$\beta_{0}$}}
\put(5.7,2.8){\makebox(0,0)[c]{\tiny$\beta_{1}$}}
\put(8.2,1){\makebox(0,0)[c]{\tiny$\beta_{2}$}}
\put(17.2,1){\makebox(0,0)[c]{\tiny$\beta_{n-2}$}}
\put(19.3,1){\makebox(0,0)[c]{\tiny$\beta_{n-1}$}}
\end{picture}.
\end{center}
There is a triangular decomposition   $\mathfrak {so}_{2n}=\mathfrak n_n^{-}\oplus \mathfrak t_n\oplus \mathfrak n_n^+$ corresponding the simple roots $\Pi_n$.
Let $\mathfrak t_n^*$ be the linear dual  of $\mathfrak t_n$ with  dual basis $\{\zeta_i\mid 1\le i\le n\}$.
Each element $\lambda\in \mathfrak t_n^*$, called a weight, is of form   $\sum_{i=1}^n \lambda_i\zeta_i$.
A weight $\lambda\in\mathfrak t_n^*$  is integral if it is the $\mathbb Z$-span or the $(\mathbb Z+\frac{1}{2})$-span of the $\zeta_i$'s.

Let $\mathfrak {so}_{2\infty}=\bigcup_{n\in \mathbb N}\mathfrak {so}_{2n}$. We also have $\mathfrak t_\infty$ and the simple system $\Pi_\infty=\{\beta_i\mid i\in\mathbb N\}$.
For $n\in\mathbb N\cup\{\infty\}$, let $ \bar \Pi_n=\Pi_n\setminus{\beta_0}$. Let $\mathfrak l_n$ (resp.,  $\mathfrak p_n$) be the Levi (resp.,  parabolic) subalgebra of $\mathfrak {so}_{2n}$ with respect to the set $ \bar \Pi_n$.
For $n\in\mathbb N$, define
\begin{equation}\begin{aligned}
X (n)&=\{\lambda\in \mathfrak t_n^* \text{ integral}\mid \lambda=\sum _{i=1}^n \lambda_i\zeta_i, \text { where $\lambda_1\geq \lambda_2\geq\ldots\geq \lambda_n$}\}\\
&=\{\lambda\in \mathfrak t_n^* \text{ integral}\mid \lambda+\rho_n=\sum _{i=1}^n \mu_i\zeta_i, \text { where $\mu_1> \mu_2>\ldots> \mu_n$}\},
\end{aligned}
\end{equation}
  where $\rho_n$ denotes the half-sum of positive roots. More explicitly,  $\rho_n=\sum_{i=1}^n(1-i)\zeta_i$. Define
 \begin{equation}\begin{aligned}
X(\infty)&=\{\lambda\in \mathfrak t_\infty^* \text{ integral}\mid \lambda=\sum _{i=1}^\infty \lambda_i\zeta_i, \text {$\lambda_1\geq \lambda_2\geq\ldots$ and $\lambda_i=\lambda_{i+1}$ for $i\gg0$}\}\\
&=\{\lambda\in \mathfrak t_\infty^* \text{ integral}\mid \lambda+\rho_\infty=\sum _{i=1}^\infty \mu_i\zeta_i, \text {$\mu_1> \mu_2>\ldots$ and $\mu_i=\mu_{i+1}+1$ for $i\gg0$}\},
\end{aligned}
\end{equation}
where  $\rho_\infty=\sum_{i=1}^\infty(1-i)\zeta_i$.
Let $L_{\mathfrak l_n}(\lambda)$ be the simple $\mathfrak l_n$-module with highest weight $\lambda$. Similarly, we have the parabolic Verma module
$\mathrm M_n(\lambda):= \text{Ind}_{\mathfrak p_n}^{\mathfrak {so}_{2n}}L_{\mathfrak l_n}(\lambda)$.
Let $ \mathrm {L}_{n}(\lambda)$ be the  simple quotient of $\mathrm M_n(\lambda)$. When $n=\infty$, we simply drop the subscript.   For example $\mathrm M(\lambda)=\mathrm M_\infty(\lambda)$ and $\mathrm L(\lambda)=\mathrm L_{\infty}(\lambda)$.
\begin{Defn}For $n\in\mathbb N\cup\{\infty\}$, let $\mathcal O(n)$ be the category of $\mathfrak t_n$-semisimple $\mathfrak {so}_{2n}$-modules $M$
such that
\begin{enumerate}\item $M=\oplus_\mu M_\mu$ and $\dim M_\mu<\infty$,
\item   $M$ decomposes into a direct sum of $\mathfrak p_n$-modules $L_{\mathfrak l_n}(\lambda)$ for $\lambda\in X(n)$;
\item there exist finitely many weights $\lambda^1,\lambda^2,\ldots,\lambda^k\in X(n)$ such that
$\mu\in \lambda^i-\sum_{\alpha\in\Pi_n}\mathbb N\alpha $ for some $i$  if $\mu$ is a weight of $M$.
\end{enumerate}\end{Defn}


\begin{Defn}\cite{CW}
The truncation functor $\text{tr}: \mathcal O (\infty)\rightarrow \mathcal O (n)$ is an exact functor  defined by
$$\text{tr}(M)=\oplus _\nu M_\nu, $$
where $M_\nu$ is the wight space of $M$ and the sum is over $\nu$ such that $(\nu,\zeta_j-\zeta_{j+1})=0$ for all $j\geq n+1$ and $j\in\mathbb N$.
\end{Defn}

For any $\lambda=\sum_{i=1}^\infty\lambda_i\zeta_i\in X(\infty)$, define $  \lambda^n=\sum_{i=1}^n\lambda_i\zeta_i\in X(n)$.
For any  $\lambda\in \Lambda$ and $d\in \mathbb I$,  let $\tilde\lambda=\sum _{i=1}^\infty \lambda_i\zeta_i +d\zeta_\infty$ and $ X^{d}(\infty)=\{\tilde \lambda\mid \lambda\in \Lambda\}$,  where $ \zeta_\infty=\sum_{i\geq1}\zeta_i$.
  Then $ X^{d}(\infty)\subset X(\infty)$ and
$\tilde\lambda^n\in X(n)$ for any $\lambda\in\Lambda$.

\begin{Lemma}\cite[Prop.~6.9]{CW}\label{turnca}
If $\lambda,\mu\in\Lambda$ and $ \ell(\lambda)\leq n$ and  $ \ell(\mu)\leq n$, then
$[\mathrm M^{\mathfrak p}(\tilde\lambda): \mathrm L(\tilde \mu)]=[\mathrm M_n(\tilde\lambda^n): \mathrm L_n(\tilde\mu^n)]  $, where $\ell(\lambda)$ is the number of non-zero parts of $\lambda$.
\end{Lemma}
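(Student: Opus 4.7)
The strategy is to apply the exact truncation functor $\mathrm{tr}\colon \mathcal O(\infty)\to\mathcal O(n)$ directly to both sides and argue that the multiplicities are preserved. The key input is a precise description of what $\mathrm{tr}$ does to parabolic Verma modules and to simple modules in $\mathcal O(\infty)$, once enough rows are available.

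The first step is to verify that $\mathrm{tr}(\mathrm M^{\mathfrak p}(\tilde\lambda))\cong \mathrm M_n(\tilde\lambda^n)$ whenever $\ell(\lambda)\le n$. By construction, $\mathrm M^{\mathfrak p}(\tilde\lambda)$ is a direct sum of finite dimensional $\mathfrak l_\infty$-modules, and the truncation selects precisely the weight spaces whose nonzero coordinates lie in rows $1,\dots,n$. Using the PBW decomposition $\mathrm M^{\mathfrak p}(\tilde\lambda)\cong U(\mathfrak n_\infty^-/\mathfrak n_\infty^-\cap\mathfrak l_\infty)\otimes L_{\mathfrak l_\infty}(\tilde\lambda)$ together with the hypothesis $\ell(\lambda)\le n$, the truncated module inherits the corresponding PBW decomposition over $\mathfrak{so}_{2n}$, matching $\mathrm M_n(\tilde\lambda^n)$.

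The second (and main) step is to show that $\mathrm{tr}(\mathrm L(\tilde\nu))\cong \mathrm L_n(\tilde\nu^n)$ when $\ell(\nu)\le n$, and to control what happens for the other composition factors of $\mathrm M^{\mathfrak p}(\tilde\lambda)$. Exactness of $\mathrm{tr}$ reduces the first claim to showing that $\mathrm{tr}(\mathrm L(\tilde\nu))$ is nonzero (containing a copy of the highest weight $L_{\mathfrak l_n}(\tilde\nu^n)$) and still simple in $\mathcal O(n)$; the latter follows from the fact that any proper submodule would lift (via induction in $n$) to a proper submodule of $\mathrm L(\tilde\nu)$. For the other composition factors $\mathrm L(\tilde\nu)$ of $\mathrm M^{\mathfrak p}(\tilde\lambda)$, standard linkage principles force $\tilde\nu\le\tilde\lambda$, and coupled with $\ell(\lambda)\le n$ one checks that such $\nu$ also satisfies $\ell(\nu)\le n$, so $\mathrm{tr}(\mathrm L(\tilde\nu))\ne 0$.

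The final step is to combine these: applying $\mathrm{tr}$ to a composition series of $\mathrm M^{\mathfrak p}(\tilde\lambda)$ yields a filtration of $\mathrm M_n(\tilde\lambda^n)$ whose sections are exactly the $\mathrm L_n(\tilde\nu^n)$'s, counted with the same multiplicities, giving the desired equality. The most delicate point is Step 2, namely ruling out that the simple head or other composition factors degenerate under $\mathrm{tr}$; once this is in hand, the rest is a formal consequence of exactness. (In \cite{CW}, this degeneration issue is handled via the explicit description of the weights appearing in $\mathrm L(\tilde\nu)$, which keep $\ell\le n$ whenever the highest weight does.)
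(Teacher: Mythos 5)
Your overall strategy---apply the exact truncation functor $\mathrm{tr}$ to both sides---is the same as the paper's, but the paper simply quotes \cite[Prop.~6.9]{CW}, which states that $\mathrm{tr}(Z(\tilde\nu))\cong Z_n(\tilde\nu^n)$ if $\ell(\nu)\le n$ \emph{and} $\mathrm{tr}(Z(\tilde\nu))=0$ otherwise, for $Z\in\{\mathrm M,\mathrm L\}$, and then concludes by exactness. The genuine gap in your write-up is the linkage claim in Step 2: it is not true that every composition factor $\mathrm L(\tilde\nu)$ of $\mathrm M(\tilde\lambda)$ satisfies $\ell(\nu)\le n$ once $\ell(\lambda)\le n$. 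In the situation this lemma is used for, the interesting composition factors are labelled by partitions obtained from the label of the parabolic Verma module by \emph{adding} an even number of boxes (this is exactly what makes the Brauer decomposition numbers nontrivial; compare Corollary~\ref{xijixs}(2)--(3), Proposition~\ref{dektss} and the proof of Theorem~\ref{amisd}, where the Verma module is indexed by the smaller partition and the simple module by the larger one), so $\ell(\nu)$ can well exceed $n$ even when $\ell(\lambda)\le n$. Consequently your final counting step, which assumes that every section of the truncated module survives as some $\mathrm L_n(\tilde\nu^n)$, is not justified as written: a priori a long factor could truncate to something containing $\mathrm L_n(\tilde\mu^n)$ and inflate the finite-rank multiplicity.

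The correct repair is precisely the ``$=0$ otherwise'' half of \cite[Prop.~6.9]{CW}: factors $\mathrm L(\tilde\nu)$ with $\ell(\nu)>n$ are annihilated by $\mathrm{tr}$, while factors with $\ell(\nu)\le n$ are sent to the pairwise non-isomorphic simples $\mathrm L_n(\tilde\nu^n)$ (the map $\nu\mapsto\tilde\nu^n$ being injective on partitions of length at most $n$); exactness of $\mathrm{tr}$ then yields $[\mathrm M(\tilde\lambda):\mathrm L(\tilde\mu)]=[\mathrm M_n(\tilde\lambda^n):\mathrm L_n(\tilde\mu^n)]$ whenever $\ell(\lambda),\ell(\mu)\le n$, which is all the lemma asserts. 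Your Step 1 and the simplicity discussion in Step 2 are reasonable sketches of what \cite{CW} actually proves, but the vanishing statement for the long composition factors cannot be bypassed by the linkage argument you propose; either prove that vanishing or cite it, as the paper does.
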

\begin{proof} Thanks to \cite[Prop.~6.9]{CW}, we have $\text{tr}(Z(\tilde\lambda))=Z_n(\tilde\lambda^n)$  if $\ell(\lambda)\leq n$ and $0$, otherwise, for all $Z\in \{\mathrm M, \mathrm L\}$, and $ \lambda\in \Lambda$.
The result follows immediately since $\text{tr} $ is an exact functor. \end{proof}

\begin{Defn} For any $d\in\mathbb I$, define  \begin{itemize}
     \item
$\xi: \Lambda \rightarrow \mathbb I^{ -}_d,  \lambda \mapsto (d+1-\lambda_1,  d+2-\lambda_2 ,  d+3-\lambda_3,\ldots)$,
\item $\gamma: \mathbb I_d^{ -}\rightarrow X^{b}(\infty)$ such that  $\gamma(\mathbf i)=-\sum_{k\geq 1}i_k\zeta_k-\rho_\infty$, where $b=-d-1$.
\item $P_\lambda=\sum_{\mu\in\Lambda^+(m)}[\mathrm M(\gamma\circ\xi(\mu)):\mathrm L(\gamma\circ\xi(\lambda))]w_{\mu,d}$, where  $\lambda\in \Lambda(m)$ and $\Lambda^+(m)$ is given in Definition~\ref{ksiijejeddd}.
\end{itemize} \end{Defn}
Then both $\gamma$ and $\xi$ are bijections and  $w_{\lambda,d}=w_{\xi(\lambda)}$.

\begin{Theorem}\label{amisd} Let $\varphi:  [K_0(B\text{-\rm mod}^{\Delta})] \cong \bigwedge_{\frac{\delta_0}{2}-1}^{\infty}\mathbb W$ be the isomorphism
in Theorem~\ref{isoophi}. Then $\varphi([P(\lambda)])=P_{\lambda'}$ for all $\lambda\in \Lambda$, where $\lambda'$ is the conjugate  partition of $\lambda$. \end{Theorem}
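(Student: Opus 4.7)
The plan is to expand both sides in the monomial basis $\{w_{\mu,d}\}$ and reduce the claim to an identity of decomposition numbers that then falls to the Kazhdan--Lusztig theory for Brauer algebras from \cite{CV,ES1}. By Theorem~\ref{isoophi}, $\varphi([\Delta(\mu)])=w_{\mu',d}$, and combining Corollary~\ref{xijixs}(2) with Proposition~\ref{dektss}(1) and the fact that $\bar\Delta(\mu)=\Delta(\mu)$ when $p=0$, we obtain
\[
\varphi([P(\lambda)])=\sum_{\mu\in\Lambda^+(m)}(P(\lambda):\Delta(\mu))\,w_{\mu',d}=\sum_{\mu\in\Lambda^+(m)}[\Delta(\mu):L(\lambda)]\,w_{\mu',d},
\]
where $m=|\lambda|$. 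Re-indexing by $\nu=\mu'$ and comparing with the definition of $P_{\lambda'}$, the theorem reduces to showing
\[
[\Delta(\mu):L(\lambda)]=[\mathrm{M}(\gamma\circ\xi(\mu')):\mathrm{L}(\gamma\circ\xi(\lambda'))]
\qquad\text{for all }\lambda,\mu\in\Lambda^+(m).
\]

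Next, I would pass to the Brauer algebra side via Proposition~\ref{xeijdiec}. Assuming first that $\lambda\in\tilde\Lambda_p^+(m)$, part (3) of that proposition gives
\[
[\Delta(\mu):L(\lambda)]=[\tilde\Delta(\mu):L(\lambda)]=[1_{\ob m}\tilde\Delta(\mu):1_{\ob m}L(\lambda)]=[\Delta_m(\mu):L_m(\lambda)],
\]
the last quantity being a decomposition number for the Brauer algebra $B_m(\delta_0)$. I would then invoke the main results of \cite{CV,ES1}, which express $[\Delta_m(\mu):L_m(\lambda)]$ as a value of a parabolic Kazhdan--Lusztig polynomial attached to the Weyl group of type $D_n$ with maximal parabolic subgroup of type $A_{n-1}$ (for $n$ sufficiently large). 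These values are precisely the multiplicities $[\mathrm{M}_n(\widetilde{\mu'}{}^n):\mathrm{L}_n(\widetilde{\lambda'}{}^n)]$ in the parabolic category $\mathcal{O}(n)$, once the combinatorial labelling via $\gamma\circ\xi$ (together with the dependence of $d=\frac{\delta_0}{2}-1$ on the parity of $\delta_0$) is matched up.

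To compare with $\mathcal{O}(\infty)$, I would apply the truncation result in Lemma~\ref{turnca}: for $n\ge\max\{\ell(\lambda'),\ell(\mu')\}$ one has
\[
[\mathrm{M}_n(\widetilde{\mu'}{}^n):\mathrm{L}_n(\widetilde{\lambda'}{}^n)]=[\mathrm{M}(\gamma\circ\xi(\mu')):\mathrm{L}(\gamma\circ\xi(\lambda'))],
\]
closing the chain of equalities for $\lambda\in\tilde\Lambda_p^+(m)$. The remaining edge case is $\lambda=\emptyset$, $\delta_0=0$, $m>0$ even, where $1_{\ob m}L(\emptyset)=0$ (Lemma~\ref{omehga0}); here I would argue directly that $[\Delta(\mu):L(\emptyset)]$ matches the corresponding $\mathfrak{so}_{2n}$ multiplicity, using Corollary~\ref{xijixs}(3) together with the Specht/cell-module decomposition numbers for $B_m(0)$.

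The main obstacle is the combinatorial matching in the middle step: verifying that under the bijection $\lambda\mapsto\gamma\circ\xi(\lambda')$ (composed with the passage to conjugate partitions), the content function $c_{\delta_0}$ on boxes translates correctly to the type-$D$ root datum so that the Kazhdan--Lusztig polynomials of \cite{CV,ES1} coincide with the parabolic multiplicities $[\mathrm{M}(\gamma\circ\xi(\mu')):\mathrm{L}(\gamma\circ\xi(\lambda'))]$. This requires a careful case split on the parity of $\delta_0$ (integer versus half-integer weights), along with the delicate treatment of the $\delta_0=0$ singular block noted above.
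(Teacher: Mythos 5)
Your proposal is correct and follows essentially the same route as the paper: expand $\varphi([P(\lambda)])$ in the monomial basis via the $\Delta$-flag, convert $(P(\lambda):\Delta(\mu))$ to $[\Delta(\mu):L(\lambda)]$ by Corollary~\ref{xijixs}(2), pass to the Brauer algebra $B_m(\delta_0)$ via Proposition~\ref{xeijdiec}, identify those decomposition numbers with parabolic category $\mathcal O$ multiplicities of type $D$ by \cite{ES1}, and finish with the truncation Lemma~\ref{turnca}, treating $\lambda=\emptyset$, $\delta_0=0$ separately (where the paper simply notes $P(\emptyset)=\Delta(\emptyset)$ by Proposition~\ref{dektss}(3) and that $\bar\emptyset^{\,n}$ is dominant integral, which is a cleaner way to close your flagged edge case).
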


\begin{proof} Suppose that $\lambda\in\Lambda(k)$.
For any $ \mu\in\Lambda^+(k)$, there is a positive integer $m$ such that $ \lambda,\mu\in\Lambda^+(m)$.
Let $\bar\mu=\gamma\circ\xi(\mu')$ and $\bar\lambda=\gamma\circ\xi(\lambda')$.
Suppose  either $\delta_0\neq0$ or $\lambda\neq\emptyset$.  Thanks to Corollary~\ref{xijixs}, Proposition~\ref{xeijdiec} and Lemma~\ref{turnca}, we have the first, second and last equalities as follows:
$$(P(\lambda):\Delta(\mu))=[\Delta(\mu):L(\lambda)]=[C_m(\mu):L_m(\lambda)]
=[\mathrm M_n({\bar\mu}^n): \mathrm L_n(\bar\lambda^n)]
=[\mathrm M ( \bar\mu  ):\mathrm L( \bar\lambda) ],$$
where $n\in \mathbb Z$ and  $n\gg0$. Finally,   the third equation follows from  \cite[Theorem~5.6]{ES1}.

Suppose that $\delta_0=0$. Then $[\mathrm M(\bar\mu):\mathrm L(\bar\emptyset)]=\delta_{\mu,\emptyset}$ since $\bar\emptyset^n $ is dominant integral for   $n\geq2$.
By Proposition~\ref{dektss}(3),  $P(\emptyset)=\Delta(\emptyset)$.

In any case, we have proved that  $\varphi([P(\lambda)])=P_{\lambda'} $ for all $\lambda\in \Lambda$, where $\varphi$ is the isomorphism given in Theorem~\ref{isoophi}.
\end{proof}

By Theorem~\ref{amisd},  we see that $\{P_\lambda\mid \lambda\in \Lambda\}$ is another basis of $\bigwedge_{\frac{\delta_0}{2}-1}^{\infty}\mathbb W$.
By Lemma~\ref{turnca}, $ [\mathrm M(\gamma\circ\xi(\mu)):\mathrm L(\gamma\circ\xi(\lambda))]$ in the definition of $P_\lambda$ can be computed by
parabolic Kazhdan-Lusztig polynomials of type $D$ with maximal parabolic subgroup of type $A$\cite[Prop.~7.5]{Sor}.
  This is the reason why we call
 $\{P_\lambda\mid \lambda\in \Lambda\}$ is the quasi-canonical  basis  of $\bigwedge_{\frac{\delta_0}{2}-1}^{\infty}\mathbb W$.

In Remark~\ref{highestweight}, we see that $B$-lfdmod is an upper finite highest weight category when $p=0$.
Then $B$-lfdmod has a semi-infinite Ringel dual $ B\text{-lfdmod}^{\text{R}}  $ in the sense of \cite{BS}. Further, the indecomposable tilting modules in $ B\text{-lfdmod}^{\text{R}} $ have finite $\Delta$-flags in the same way as the projectives for $B$ with different combinatorial labeling since Ringel duality reverses the order on the underlying poset.
 When $p=0$ and $\delta_0\in\mathbb Z$, we expect that the indecomposable tilting module in $B\text{-lfdmod}^{\text{R}} $ matches exactly the canonical basis of $\bigwedge_{\frac{\delta_0}{2}-1}^{\infty}\mathbb W$ studied in \cite{Bao,BW} by using the Schur-Weyl duality between the coideal subalgebra of $U_q(\mathfrak {sl}_n)$ and Hecke algebra of type $D$.


\small
\end{document}